\newtheorem{theorem}{Theorem}[section]
\newtheorem{proposition}[theorem]{Proposition}
\newtheorem{lemma}[theorem]{Lemma}
\newtheorem{corollary}[theorem]{Corollary}
\theoremstyle{definition}
\newtheorem{definition}[theorem]{Definition}
\newtheorem{remark}[theorem]{Remark}
\numberwithin{equation}{section}
\newcommand{\R}{\mathbb{R}}
\newcommand{\N}{\mathbb{N}}
\newcommand{\Ca}{\mathcal{C}}
\newcommand{\A}{\mathcal{A}}
\newcommand{\F}{\mathscr{F}}
\newcommand{\G}{\mathscr{G}}
\newcommand{\K}{\mathcal{K}}
\newcommand{\M}{\mathcal{M}}
\newcommand{\Pa}{\mathcal{P}}
\newcommand{\Ra}{\mathcal{R}}
\newcommand{\Za}{\mathcal{Z}}
\newcommand{\e}{\varepsilon}
\newcommand{\w}{\widetilde}
\newcommand{\om}{\omega}
\newcommand{\Om}{\Omega}
\newcommand{\n}[1]{\| #1\|}
\newcommand{\nn}[1]{\vert\vert\vert #1\vert\vert\vert}
\newcommand{\abs}[1]{\vert #1\vert}
\newcommand{\ind}{\mathds{1}}
\newcommand{\ws}{weak$^\ast$}
\renewcommand{\span}{\mathrm{span}}
\newcommand{\supp}{\mathrm{supp}}
\newcommand{\SVM}{\mathsf{SVM}}
\newcommand{\Ext}{\mathrm{Ext}}
\newcommand{\JL}{\mathrm{JL}}
\newcommand{\id}{\mathrm{id}}
\newcommand{\dist}{\mathrm{dist}}
\newcommand{\cf}{\mathrm{cf}}
\renewcommand{\geq}{\geqslant}
\renewcommand{\leq}{\leqslant}
\newcommand{\striangle}{\scalebox{0.70}{$\triangle$}}
\newcommand{\sprod}{\scalebox{0.90}{$\prod$}}
\newcommand{\ex}[3]{0\to #1\to #2\to #3\to 0}
\newcommand{\exi}[5]{
0\longrightarrow #3\overset{#1}{\longrightarrow} #4\overset{#2}{\longrightarrow} #5\longrightarrow 0
}
\begin{document}

\keywords{Vector measure, twisted sum, three-space problem, Hyers--Ulam stability, quasi-linear map, zero-linear map.}
\subjclass[2010]{Primary 28B05, 46G10, 46B25; Secondary 46B03}

\title[Stability of vector measures and twisted sums of Banach spaces]{Stability of vector measures and twisted sums\\ of Banach spaces}

\author[T. Kochanek]{Tomasz Kochanek}
\address{Institute of Mathematics, University of Silesia,
Bankowa 14, 40-007 Katowice, Poland}
\email{t\_kochanek@wp.pl}

\begin{abstract}
A Banach space $X$ is said to have the $\SVM$ (stability of vector measures) property if there exists a~constant $v<\infty$ such that for any algebra of sets $\F$, and any function $\nu\colon\F\to X$ satisfying $$\n{\nu(A\cup B)-\nu(A)-\nu(B)}\leq 1\quad\mbox{for disjoint }A,B\in\F ,$$there is a~vector measure $\mu\colon\F\to X$ with $\n{\nu(A)-\mu(A)}\leq v$ for all $A\in\F$. If this condition is valid when restricted to set algebras $\F$ of cardinality less than some fixed cardinal number $\kappa$, then we say that $X$ has the $\kappa$-$\SVM$ property. The least cardinal $\kappa$ for which $X$ does not have the $\kappa$-$\SVM$ property (if it exists) is called the $\SVM$ character of $X$. We apply the machinery of twisted sums and quasi-linear maps to characterise these properties and to determine $\SVM$ characters for many classical Banach spaces. We also discuss connections between the $\kappa$-$\SVM$ property, $\kappa$-injectivity and the `three-space' problem.
\end{abstract}

\maketitle

\section{Introduction}
\noindent
It is a widely recognised fact that there is a strong interplay between the vector measure theory and the Banach space theory. Many structural properties of Banach spaces (such as being isomorphic to a dual space, containing an isomorphic copy of $c_0$ or $\ell_\infty$, sequential completeness, admitting a boundedly complete basis {\it etc}.) translate into natural properties of vector measures. Several representation theorems involving vector measures provide also important results concerning the form of operators of various types (weakly compact, nuclear, absolutely summing {\it etc}.)~acting on Banach spaces like $\Ca(K)$ or $L_\infty(\mu)$. The classical monograph \cite{diestel_uhl} by Diestel and Uhl gives an excellent general treatment of the subject.

In this paper, we will deal with a~stability problem for vector measures (by a {\it vector measure} we will always mean a~finitely additive set function) and discuss its connections with the theory of twisted sums of Banach spaces. The following two definitions will be of central importance to us:
\begin{definition}\label{def_SVM}
We say that a Banach space $X$ has the $\SVM$ ({\it stability of vector measures}) {\it property} if there exists a~constant $v(X)<\infty$ (depending only on $X$) such that given any set algebra (that is, any family of subsets of some set containing $\varnothing$ and closed under complementation and finite unions) $\F$ and any mapping $\nu\colon\F\to X$ satisfying
\begin{equation}\label{1}
\n{\nu(A\cup B)-\nu(A)-\nu(B)}\leq 1\quad\mbox{for }A,B\in\F ,\, A\cap B=\varnothing,
\end{equation}
there is a vector measure $\mu\colon\F\to X$ with $$\n{\nu(A)-\mu(A)}\leq v(X)\quad\mbox{for }A\in\F.$$
\end{definition}
\begin{definition}\label{def_kSVM}
Let $\kappa$ be a cardinal number. We say that a Banach space $X$ has the $\kappa$-$\SVM$ {\it property} if there exists a~constant $v(\kappa,X)<\infty$ (depending only on $\kappa$ and $X$) such that given any set algebra $\F$ with cardinality less than $\kappa$, and any mapping $\nu\colon\F\to X$ satisfying \eqref{1}, there is a~vector measure $\mu\colon\F\to X$ with $$\n{\nu(A)-\mu(A)}\leq v(\kappa, X)\quad\mbox{for }A\in\F.$$

If $X$ is a Banach space which does not have the $\SVM$ property, then by the $\SVM$ {\it character} of $X$ we mean the minimal cardinal number $\kappa$ such that $X$ does not have the $\kappa$-$\SVM$ property, and we denote it $\tau(X)$.
\end{definition}

Any function $\nu$ satisfying inequality \eqref{1} with some $\e$ instead of $1$ will be called $\e$-{\it additive}. If $A$ is a~set then $\abs{A}$ stands for its cardinality; if $\kappa$ is a~cardinal number then $\kappa^+$ stands for its cardinal successor. Note that the definition of $\SVM$ character is well-posed, since every Banach space $X$ without the $\SVM$ property does not have the $\kappa$-$\SVM$ property for some cardinal number $\kappa$. To see this, take a~sequence of $1$-additive maps $\nu_n\colon\F_n\to X$ such that for each $n\in\N$ and every vector measure $\mu\colon\F_n\to X$ we have $\n{\nu_n(A)-\mu(A)}\geq n$ for some $A\in\F_n$, and consider the simple product $\Sigma=\sprod_n\F_n$ of Boolean algebras $(\F_n)_{n=1}^\infty$. Then, the maps $\nu_n\circ\pi_n\colon\Sigma\to X$ (where $\pi_n$ is the projection onto the $n$th coordinate) are all $1$-additive and witness that $X$ does not have the $\abs{\Sigma}^+$-$\SVM$ property.

In general, by saying $\tau(X)>\kappa$ (or $\geq\kappa$) we formally mean that either $X$ does not have the $\SVM$ property and then the claimed inequality holds true, or simply $X$ has the $\SVM$ property. In the case where $X$ has the $\kappa$-$\SVM$ property the symbol $v(\kappa,X)$ will stand for the infimum of all such numbers for which the condition from Definition \ref{def_kSVM} is valid (it may happen that this condition is no longer true for the infimum).

The origin of this type of stability notion is traced back to Ulam (see \cite{ulam_1}) who in 1940 formulated the following problem: Let $G_1$ be a~group and let $G_2$ be a~metric group with a~metric $d(\cdot ,\cdot)$. Given $\delta>0$, does there exist $\e>0$ such that to each mapping $F\colon G_1\to G_2$ satisfying $d(F(x+y),F(x)+F(y))\leq\e$ for all $x,y\in G_1$ there corresponds a homomorphism $A\colon G_1\to G_2$ with $d(F(x),A(x))\leq\delta$ for all $x\in G_1$? It should be mentioned that a version of Ulam's problem for real sequences appeared in the book of P\'olya and Szeg\H{o} \cite[Chapter 3, Problem 99]{polya_szego}.  Hyers \cite{hyers} was the first who, in 1941, gave a~solution in the case where $G_1$ and $G_2$ are Banach spaces. Nowadays the theory of Hyers--Ulam stability is widely developed; one can consult, {\it e.g.}, the survey paper \cite{forti} by Forti.

Kalton and Roberts were the first who were dealing with a~stability problem for (real-valued) set additive functions, and in 1983 they published the following, beautiful result which is the main motivation for our study.
\begin{theorem}[Kalton \& Roberts \cite{kalton_roberts}]\label{KR}
There is an absolute constant $K<45$ having the property: If $\F$ is a~set algebra and a~function $\nu\colon\F\to\R$ satisfies $$\abs{\nu(A\cup B)-\nu(A)-\nu(B)}\leq 1\quad\mbox{for }A,B\in\F ,\, A\cap B=\varnothing,$$then there exists an additive set function $\mu\colon\F\to\R$ such that $\abs{\nu(A)-\mu(A)}\leq K$ for $A\in\F$.
\end{theorem}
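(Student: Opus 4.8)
The plan is to first reduce the statement to finite set algebras and then attack a purely combinatorial core. If $\F$ is an arbitrary set algebra, I would consider the directed family of its finite subalgebras $\F_0$. Granting the finite case with a uniform constant $K$, for each $\F_0$ I obtain an additive $\mu_0\colon\F_0\to\R$ with $\abs{\nu(A)-\mu_0(A)}\le K$ on $\F_0$. To pass to the limit I would confine, for each $A\in\F$, the candidate value $\mu(A)$ to the compact interval $[\nu(A)-K,\nu(A)+K]$ and work inside the product $P=\prod_{A\in\F}[\nu(A)-K,\nu(A)+K]$, compact by Tychonoff. For each finite subalgebra $\F_0$ the set of $f\in P$ whose restriction to $\F_0$ is additive is closed and nonempty, and this family has the finite intersection property because the subalgebra generated by finitely many $\F_0$'s is again finite. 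Any point in the total intersection is additive on every finite subalgebra, hence additive on $\F$ (additivity only ever involves the finite algebra generated by two sets) and stays within $K$ of $\nu$. Everything thus reduces to a finite algebra, which we may identify with $2^{[n]}$, the additive functions being exactly the marginals $A\mapsto\sum_{i\in A}m_i$.

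Next I would reformulate the finite problem by $\ell_\infty$-duality. Writing $\dist_\infty(\nu,\mathrm{Add})$ for the distance (in $\max_A\abs{\cdot}$ over $A\in\F$) from $\nu$ to the $n$-dimensional space of additive functions, finite-dimensional duality gives
\begin{equation*}
\dist_\infty(\nu,\mathrm{Add})=\sup\Bigl\{\,\sum_{A}\lambda_A\nu(A)\;:\;\sum_A\abs{\lambda_A}\le 1,\ \sum_{A\ni i}\lambda_A=0\ \text{for every atom } i\,\Bigr\},
\end{equation*}
the constraint $\sum_{A\ni i}\lambda_A=0$ being precisely the requirement that the signed weight $\lambda$ annihilate every marginal. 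So the theorem becomes the assertion that, for any such balanced family of weights of total variation at most $1$, almost-additivity forces $\abs{\sum_A\lambda_A\nu(A)}\le K$.

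To exploit almost-additivity I would split $\lambda=\lambda^+-\lambda^-$ into positive and negative parts; the balance condition says exactly that the two positive weightings $\lambda^+,\lambda^-$ on subsets of $[n]$ induce the same marginal density $\sum_{A\ni i}\lambda^\pm_A$ at each atom $i$. The quantity to bound is then the difference $\sum_A\lambda^+_A\nu(A)-\sum_A\lambda^-_A\nu(A)$, and the idea is to transport $\lambda^+$ to $\lambda^-$ through elementary moves — splitting a set into two disjoint pieces, or merging two disjoint sets — each of which alters $\sum_A\lambda_A\nu(A)$ by at most the defect $1$ per unit of transported mass. The crucial point is that one must \emph{not} simply reduce every set to its singletons: telescoping gives only $\abs{\nu(A)-\sum_{i\in A}\nu(\{i\})}\le\abs{A}-1$, so a careless scheme accumulates error proportional to $\log n$ (splitting repeatedly into halves) or worse, which is exactly why the naive additive approximation fails.

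The main obstacle, and really the entire content of the theorem, is the combinatorial lemma that removes this unbounded loss. What is needed is a concentration/counting statement guaranteeing that two positive weightings with equal atom-marginals can be matched up through a bounded \emph{total} amount of elementary splitting and merging, bounded by an absolute constant independent of $n$ rather than by $\log n$. I would prove it by a probabilistic argument: show that a suitably randomised partition of the relevant sets into a bounded number of balanced blocks succeeds with positive probability, so that the transport can be organised in rounds whose costs decay geometrically and whose total is therefore bounded uniformly in $n$. Optimising the parameters of this partition lemma — the number of blocks, the per-round defect, and the rate of decay — is what yields an explicit numerical value, and sharpening these estimates is what pushes the final constant below $45$.
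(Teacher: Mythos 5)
Your reduction to finite algebras via Tychonoff compactness and the finite-dimensional duality reformulation are both correct, and they match the soft outer layer of the known argument (note that the paper under review quotes this theorem from \cite{kalton_roberts} without reproving it, so the comparison here is with the original source). But everything after that is a placeholder, and it sits precisely where the theorem lives. You yourself say that the ``entire content of the theorem'' is a lemma guaranteeing that two positive weightings with equal atom-marginals can be matched by a bounded \emph{total} amount of splitting and merging, and you then propose to prove it ``by a probabilistic argument'' organised in ``rounds whose costs decay geometrically'' --- with no precise statement of the lemma, no proof, no mechanism explaining why the costs would decay, and no computation that could yield any explicit constant, let alone $K<45$. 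This is not a gap one fills in routinely: the $\log n$ loss from telescoping that you correctly flag is exactly what defeats naive transport schemes, and an unspecified ``balanced random partition'' gives no reason why the error per round should shrink rather than accumulate.

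In the actual proof the combinatorial core is not a direct transport argument but a reduction to the existence of \emph{concentrators}: Kalton and Roberts invoke Pippenger's theorem that for every $n$ there is a bipartite graph with $n$ inputs, about $3n/4$ outputs and at most $6n$ edges, in which every set of at most $n/2$ inputs can be matched into distinct outputs (a Hall-type flow property). The stability estimate is then extracted by an iteration in which the bounded edge-density of the concentrator is what caps the total error independently of $n$, and the numerical bound (they obtain roughly $K\leq 44.5$) comes from optimising that iteration. Your instinct that a probabilistic construction lies at the bottom is sound --- concentrators are indeed shown to exist by a counting/random-graph argument --- but the existence statement, the matching/flow argument connecting concentrators to nearly additive set functions, and the bookkeeping that produces an absolute constant are all substantial pieces of mathematics, and all three are absent from your proposal.
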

From now on $K$ will stand for the best possible constant in the above theorem and will be referred to as the {\it Kalton--Roberts constant}. The exact value of $K$ is not known; the lower bound $K\geq 3/2$ was shown by Pawlik \cite{pawlik}. In our terminology, Theorem \ref{KR} asserts that the one-dimensional space $\R$ has the $\SVM$ property and, as an immediate consequence, all the finite-dimensional spaces $\R^n$, as well as the space $\ell_\infty$, also have this property (it does not matter which concrete norm in $\R^n$ we decide to use; it may only affect the value of $v(\R^n)$). The question whether a nearly additive real-valued set function may be approximated by an additive one was posed by Kalton, explicitly in \cite{kalton (problem)} and implicitly in \cite{kalton}. The reason for which he found it of interest is that it is a~kind of reformulation of the question whether $c_0$ and $\ell_\infty$ are the so-called $\K$-{\it spaces}, {\it i.e.} whether they do not admit any non-trivial exact sequences of the form $\ex{\R}{Z}{c_0}$, or $\ex{\R}{Z}{\ell_\infty}$. We refer the reader to Section 3 for more technical details. Now, let us give a~brief outline of the background to these issues.

The so-called {\it three-space problem} (shortly: $\mathsf{3SP}$ problem, cf. \cite{castillo_gonzalez}) lies at the heart of the Banach space theory. Its general framework is the following: Given an exact sequence $\ex{Y}{Z}{X}$ of Banach spaces (more generally, $F$-spaces) $X$, $Y$, $Z$, which properties of the middle space $Z$ follow from the analogous properties of $X$ and $Y$? A~property $\Pa$ is called a~$\mathsf{3SP}$ {\it property} (in some prescribed class of linear topological spaces) whenever the implication: $X$ and $Y$ posses $\Pa$, then $Z$ also possesses $\Pa$, holds true for any exact sequence as above built from spaces belonging to the class considered. The $\mathsf{3SP}$ problem became one of central topics in functional analysis in seventies, when Enflo, Lindenstrauss and Pisier \cite{enflo_lindenstrauss_pisier} found a counterexample to the problem of Palais: If $X$ and $Y$ are Hilbert spaces, must $Z$ be (isomorphic to) a~Hilbert space? Many other basic properties, like local convexity or being weakly compactly generated, have led to some remarkable examples in the theory of Banach spaces and $F$-spaces. The monograph \cite{castillo_gonzalez} by Castillo and Gonz\'alez exhibits many of them, as well as the general theory of the $\mathsf{3SP}$ problem.

One of the turning points in the history of the $\mathsf{3SP}$ problem occurred in late seventies when Kalton published his paper \cite{kalton} and then, jointly with Peck, \cite{kalton_peck}. Their main feature is that they give a~deep general link between the structure of exact sequences of (locally bounded) $F$-spaces and the stability behaviour of the so-called quasi-linear maps. Given two locally bounded $F$-spaces (equivalently: quasi-Banach spaces) $X$ and $Y$, we say that a~map $f\colon X\to Y$ is {\it quasi-linear} if it is homogeneous and satisfies $$\n{f(x+y)-f(x)-f(y)}\leq c(\n{x}+\n{y})\quad\mbox{for }x,y\in X,$$where $c<\infty$ is some constant. Kalton and Ribe showed that every exact sequence $$\exi{i}{q}{Y}{Z}{X}$$ is, in a sense, generated by some quasi-linear map $f\colon X\to Y$ and, moreover, such a~sequence splits ({\it i.e.} $Z\simeq X\oplus Y$ and, through some isomorphism, $i$ is the natural inclusion and $q$ is the canonical projection; see Section~3 for details) if and only if there exists a~linear (not necessarily continuous) map $h\colon X\to Y$ such that $$\n{f(x)-h(x)}\leq kc\n{x}\quad\mbox{for }x\in X,$$ where $k<\infty$ depends only on $X$ and $Y$. In other words, if we wish to construct a~non-trivial exact sequence with a~subspace isomorphic to $Y$ and a~quotient isomorphic to $X$, then we shall be looking for a~quasi-linear map from $X$ into $Y$ which is not approximable by any linear one. If no such quasi-linear map exists, then we have a~kind of stability effect for quasi-linear maps between $X$ and $Y$.

The aim of this paper is to show that some aspects of the $\mathsf{3SP}$ problem may be described in the language of vector measures and the structure of exact sequences of Banach spaces depends on stability properties of vector measures, likewise it depends on stability properties of quasi-linear maps. 

The paper is organised as follows: In Section 2 we give several very basic results concerning the $\kappa$-$\SVM$ property. Among others, we show that the Kalton--Roberts theorem almost immediately implies that $\tau(c_0(\Gamma))>\omega$ and $\tau(\mathcal{C}(\Omega))>\omega$ for every non-empty index set $\Gamma$ and every compact metric space $\Omega$ (Proposition \ref{omega_c0}) and, more generally, that $\tau(X)>\omega$ for every $\mathscr{L}_\infty$-space $X$ (Corollary \ref{linfty}). Section 3 reviews some of the background material needed in later sections. In Section 4 we present two theorems which give necessary conditions for the $\SVM$ property and the $\kappa$-$\SVM$ property. Theorem \ref{T2} says that the $\bigl(2^\Gamma\bigr)^+$-$\SVM$ property of $X$ implies that the pair $(\ell_\infty(\Gamma),X)$ splits, whereas the $\Gamma^+$-$\SVM$ property implies that $(c_0(\Gamma),X)$ splits. From this we may conclude that many classical Banach spaces, like $L_p$-spaces ($1\leq p<\infty$), quasi-reflexive James' space, Schreier's space, Tsirelson's space {\it etc}., have $\SVM$ character equal to $\omega$ (Corollary \ref{Cor2}). However, for compact metric spaces $\Omega$ the $\SVM$ character of $\mathcal{C}(\Omega)$ equals $\omega_1$, unless this space is finite-dimensional or isomorphic to $c_0$ (Corollary \ref{Cor3}). Next, in Section 5, we apply the machinery of twisted sums to prove (Theorem \ref{k_injective}) that $\kappa$-injectivity implies the $\kappa$-$\SVM$ property, provided $\kappa$ has uncountable cofinality; for other $\kappa$'s we have to assume $(\lambda,\kappa)$-injectivity, for some fixed number $\lambda$ (see Definition \ref{kappa}). Continuing this line of discussion, we define in Section 6 a~generalised Johnson--Lindenstrauss space, denoted $\JL_\infty(\Gamma)$, which will be used to derive the equality $\tau(c_0(\Gamma))=\omega_2$ (Corollary \ref{c0}). We also build a~non-trivial twisted sum of $\JL_\infty$ (the usual Johnson--Lindenstrauss space) and $c_0(\omega_1)$ and show that $\tau(\JL_\infty)=\omega_2$ (Proposition \ref{ext_JL} and Theorem \ref{JLchar}). In Section 7 we give a~characterisation of the $\kappa$-$\SVM$ property in a~form of the condition $\Ext(\mathfrak{X},\cdot)=0$, postulated for Banach spaces $\mathfrak{X}$ from a~certain class of `testing' spaces (Corollary \ref{cor_iff}). By doing so, we prove that the $\kappa$-$\SVM$ property is a~$\mathsf{3SP}$ property (Theorem \ref{3sp}). In Section 8 we characterise the $\SVM$ property for Banach spaces complemented in their biduals, {\it e.g.} by the condition $\Ext(X^\ast,\ell_1)=0$ (Theorem \ref{T4}). Finally, Section 9 contains a~list of open problems stemming naturally from the presented material.

Our Banach space, vector measure and set theory terminology will follow \cite{albiac_kalton}, \cite{diestel_uhl} and \cite{ciesielski}, respectively. By an {\it operator} we always mean a~continuous and linear mapping, and by a~{\it subspace} we mean a~{\it closed} linear subspace, of course not counting these cases when we talk about a~dense subspace. An {\it isomorphism} is a one-to-one surjective operator whose inverse is an operator as well. The fact that Banach spaces $X$ and $Y$ are isomorphic we note as $X\simeq Y$. We also consider all spaces to be real, since the complex case makes no serious difference in the theory.
\section{Preliminary results}
\noindent
In this section we collect several simple observations which, however, give a~good starting point for investigating the $\kappa$-$\SVM$ and the $\SVM$ properties, and which will be found useful later on. 

First, observe that every Banach space $X$ has the $n$-$\SVM$ property for each $n\in\N$. Indeed, if $\F\subset 2^\Om$ is a~finite algebra of sets (we may assume that $\F=2^\Om$), $\abs{\F}<n$, and $\nu\colon\F\to X$ is $1$-additive then, by a~simple induction, we have $$\Biggl\|\nu(A)-\sum_{a\in A}\nu\{a\}\Biggr\|\leq\abs{A}-1<\log_2n-1\quad\mbox{for }A\in\F,$$thus the measure $\mu\colon\F\to X$, defined by $\mu\{a\}=\nu\{a\}$ for $a\in\Om$, does the job. Consequently, for every Banach space $X$ we have $\tau(X)\geq\om$ and $v(n,X)<\log_2n-1$ for each $n\in\N$.

As a~warm-up, we prove a~lower estimate for the $\SVM$ characters of $c_0(\Gamma)$ and $\mathcal{C}(\Omega)$, with $\Omega$ being a~compact metric space, which is an~almost immediate consequence of the Kalton--Roberts theorem. This estimate will also follow from a~more general result, Corollary \ref{linfty}, but for the spaces: $c_0(\Gamma)$, $\mathcal{C}[0,\omega^{\omega^\alpha}]$, $\alpha$ being a~countable ordinal, and $\mathcal{C}[0,1]$ (the latter two exhausting the whole class of $\mathcal{C}(\Omega)$-spaces with compact, metrisable $\Omega$) the proof is more direct and an~approximating vector measure is defined coordinate-wise without appealing to the fact that the underlying space is a~$\mathscr{L}_\infty$-space.
\begin{proposition}\label{omega_c0}
For every non-empty set $\Gamma$ and every compact metric space $\Omega$ we have:
\begin{itemize*}
\item[{\rm (i)}] $\tau(c_0(\Gamma))>\omega$;
\item[{\rm (ii)}] $\tau(\mathcal{C}(\Omega))>\omega$.
\end{itemize*}
\end{proposition}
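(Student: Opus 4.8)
The plan is to show that both spaces enjoy the $\omega$-$\SVM$ property, which is exactly the assertion $\tau>\omega$: since every Banach space already has the $n$-$\SVM$ property for each $n$, what is needed is a single finite constant valid simultaneously for all \emph{finite} set algebras $\F$. The common engine will be the Kalton--Roberts theorem applied to the scalar functions obtained by composing $\nu$ with the norm-one evaluation functionals --- the coordinate maps $\delta_\gamma\colon x\mapsto x(\gamma)$ in the $c_0(\Gamma)$ case, and the point evaluations $\delta_t\colon f\mapsto f(t)$ in the $\mathcal{C}(\Omega)$ case. In each such coordinate the composition $A\mapsto\nu(A)(\cdot)$ is a $1$-additive scalar function, so Theorem \ref{KR} produces a scalar measure approximating it within $K$. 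The one real difficulty is that reassembling these scalar measures coordinate-wise need not land in the target space: the result must \emph{vanish at infinity} for $c_0(\Gamma)$ and must be \emph{continuous} for $\mathcal{C}(\Omega)$. This landing-in-the-space issue is the main obstacle, and finiteness of $\F$ is what rescues both cases.

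For (i) I would first observe that, $\F$ being finite and each $\nu(A)$ lying in $c_0(\Gamma)$, the set $F=\{\gamma\in\Gamma:\ |\nu(A)(\gamma)|\geq 1\ \text{for some}\ A\in\F\}$ is a finite union of finite sets, hence finite. On each coordinate $\gamma\notin F$ I simply take the zero measure, which already satisfies $|\nu(A)(\gamma)-0|<1\leq K$ for all $A$; on the finitely many coordinates $\gamma\in F$ I invoke Theorem \ref{KR} to obtain scalar measures $\mu_\gamma$ with $|\nu(A)(\gamma)-\mu_\gamma(A)|\leq K$. Then $\mu(A)=(\mu_\gamma(A))_{\gamma\in\Gamma}$ is finitely supported, hence $c_0(\Gamma)$-valued, is additive coordinate-wise, and satisfies $\n{\nu(A)-\mu(A)}\leq K$ for every $A\in\F$; thus $v(\omega,c_0(\Gamma))\leq K$.

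For (ii) the truncation trick is unavailable, and the genuine work is manufacturing continuity. Here I would exploit that the finite family $\{\nu(A):A\in\F\}$ of continuous functions on the compact metric space $\Omega$ shares one modulus of continuity. Given $\e>0$, I fix $\delta>0$ so that $d(s,t)<\delta$ forces $|\nu(A)(s)-\nu(A)(t)|<\e$ for every $A\in\F$, cover $\Omega$ by finitely many balls $B(t_1,\delta/2),\dots,B(t_N,\delta/2)$, and choose a continuous partition of unity $\psi_1,\dots,\psi_N$ subordinate to this cover. At each net point $t_j$ I apply Theorem \ref{KR} to the $1$-additive scalar function $A\mapsto\nu(A)(t_j)$, obtaining $\mu_{t_j}$ within $K$, and I set $\mu(A)=\sum_{j=1}^{N}\mu_{t_j}(A)\psi_j$. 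This $\mu$ is manifestly $\mathcal{C}(\Omega)$-valued and additive in $A$, and since $\sum_j\psi_j\equiv 1$, writing $\nu(A)(t)=\sum_j\nu(A)(t)\psi_j(t)$ and estimating $|\nu(A)(t)-\mu_{t_j}(A)|\leq\e+K$ on the support of each $\psi_j$ bounds $\n{\nu(A)-\mu(A)}$ by $K+\e$. Letting $\e\to0$ gives $v(\omega,\mathcal{C}(\Omega))\leq K$. The only delicate point is this partition-of-unity gluing, where one must verify that localising the supports keeps the equicontinuity error under control \emph{uniformly} in $A$.
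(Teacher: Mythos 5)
Your proof is correct. Part (i) is essentially the paper's own argument: coordinate-wise application of Theorem \ref{KR} together with truncation to a finite set of coordinates (the paper truncates at level $\e$ and you at level $1$, using $K\geq 3/2$; this is immaterial). Part (ii), however, takes a genuinely different route. The paper first reduces a general compact metric $\Omega$ to the concrete spaces $[0,1]$ and $[0,\omega^{\omega^\alpha}]$ via the Miljutin and Bessaga--Pe\l czy\'nski classification theorems, and then interpolates the values $\mu_j(A)$ at the points of an $\e$-net explicitly (piecewise linearly on $[0,1]$, piecewise constantly on the ordinal intervals). You bypass the classification entirely: the common modulus of continuity of the finite family $\{\nu(A)\colon A\in\F\}$, a finite cover by $\delta/2$-balls and a subordinate partition of unity let you glue the Kalton--Roberts measures into $\mu(A)=\sum_j\mu_{t_j}(A)\psi_j$, which is automatically continuous and additive; your delicate point is indeed sound, since $\sum_j\psi_j\equiv 1$ and only indices with $t\in B(t_j,\delta/2)$ contribute at $t$, so $\abs{\nu(A)(t)-\mu(A)(t)}\leq\sum_j\psi_j(t)(K+\e)=K+\e$ uniformly in $A$. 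What your approach buys is elementarity and generality: it uses no structure theory of $\mathcal{C}(\Omega)$-spaces, and it works verbatim for any compact Hausdorff $\Omega$ (replace the uniform modulus by finitely many open sets on which every $\nu(A)$ oscillates by less than $\e$), thus recovering directly the $\mathcal{C}(\Omega)$ case of Corollary \ref{linfty}, which the paper only obtains later through the $\mathscr{L}_\infty$-space machinery of Proposition \ref{locally}. What the paper's route buys is an approximating measure given by a completely explicit interpolation formula on the concrete spaces to which every $\mathcal{C}(\Omega)$ with $\Omega$ compact metric is isomorphic, at the price of invoking two deep classification theorems.
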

\begin{proof}
(i): Let $\F$ be any finite field of sets and let $\nu\colon\F\to c_0(\Gamma)$ be a $1$-additive function. Choose any $\e>0$ and pick a~finite set $F_\e\subset\Gamma$ such that $\abs{e_\gamma^\ast(\nu(A))}<\e$ for each $\gamma\in\Gamma\setminus F_\e$ and $A\in\F$, where $e_\gamma^\ast$ is the $\gamma$th coordinate functional on $c_0(\Gamma)$. For each $\gamma\in F_\e$ separately apply Theorem \ref{KR} to produce an additive set function $\mu_\gamma\colon\F\to\R$ satisfying $\abs{e_\gamma^\ast(\nu(A))-\mu_\gamma(A)}\leq K$ for $A\in\F$. Then the measure $\mu\colon\F\to c_0(\Gamma)$, defined by $$e_\gamma^\ast\mu(A)=\mu_\gamma(A)\,\,\,\mbox{for }\gamma\in F_\e,\mbox{ and }e_\gamma^\ast\mu(A)=0\,\,\,\mbox{for }\gamma\in\Gamma\setminus F_\e$$
satisfies $\n{\nu(A)-\mu(A)}\leq K$ for $A\in\F$. We have thus proved that $c_0(\Gamma)$ has the $\om$-$\SVM$ property and, moreover, $v(\om,c_0(\Gamma))=K$.

\vspace*{2mm}
(ii): In view of Miljutin's theorem (\cite{miljutin}, see also \cite[Theorem 2.1]{rosenthal (chapter)}), for every uncountable metric compact space $\Omega$ the space $\mathcal{C}(\Omega)$ is isomorphic to $\mathcal{C}[0,1]$, whereas the theorem of Bessaga and Pe\l czy\'nski (\cite{bessaga_pelczynski}, see also \cite[Theorem 2.14]{rosenthal (chapter)}) says that for every infinite countable compact metric space $K$ the space $\mathcal{C}(\Omega)$ is isomorphic to one of the spaces $\mathcal{C}[0,\omega^{\omega^\alpha}]$, $\alpha\geq 0$ being a~countable ordinal, which are mutually non-isomorphic. Therefore, in the rest of the proof we may (and do) assume that $\Omega$ is either $[0,1]$ or $[0,\omega^{\omega^\alpha}]$ for some countable ordinal $\alpha$.

Let $\F$ be any finite field of sets and suppose $\nu\colon\F\to \mathcal{C}(\Omega)$ is a~$1$-additive function. For any $\e>0$ there is a~finite $\e$-net $$0=t_0<t_1\ldots <t_n=\left\{\begin{array}{ll}1 & \mbox{if }\Omega=[0,1]\\ \omega^{\omega^\alpha} & \mbox{if }\Omega=[0,\omega^{\omega^\alpha}]\end{array}\right.$$ of the space $\Omega$ which may be also chosen so that for every $A\in\F$ the oscillation of the function $\nu(A)$ inside the interval $[t_{j-1},t_j]$ is at most $\e$, for each $1\leq j\leq n$. By applying Theorem \ref{KR} to each of the $1$-additive functions $\F\ni A\mapsto\nu(A)(t_j)$ ($0\leq j\leq n$), we get scalar measures $\mu_j$ such that $\abs{\nu(A)(t_j)-\mu_j(A)}\leq K$ for $A\in\F$ and $0\leq j\leq n$. Now, for every $A\in\F$ we define a~continuous function $\mu(A)$ interpolating the points $(t_j,\mu_j(A))$ ($0\leq j\leq n$) as follows:
\begin{itemize*}
\item if $\Omega=[0,1]$ we let $\mu(A)$ be the piecewise linear function, agreeing with $\mu_j(A)$ at $t_j$ for each $0\leq j\leq n$, and linear on all the intervals of $[0,1]\setminus\{t_0,\ldots ,t_n\}$;
\item if $\Omega=[0,\omega^{\omega^\alpha}]$ then for each $1\leq j\leq n$ we put $\mu(A)(t)=\mu_j(A)$ for all $t\in (t_{j-1}, t_j]$ and $\mu(A)(0)=\mu_0(A)$. 
\end{itemize*}

Plainly, for each $A\in\F$ the function $\mu(A)$ belongs to $\mathcal{C}(\Omega)$ and the so-defined mapping $\mu\colon\F\to \mathcal{C}(\Omega)$ is a~vector measure. Finally, for any $A\in\F$, $t\in K$ and $j\in\{0,1,\ldots ,n\}$ with $t\in [t_{j-1},t_j]$, we have $$\abs{\nu(A)(t)-\mu(A)(t)}\leq\abs{\nu(A)(t_j)-\mu_j(A)}+2\e\leq K+2\e,$$since the oscillations of the both functions $\nu(A)$ and $\mu(A)$ inside $[t_{j-1},t_j]$ is at most $\e$. This shows that $\n{\nu(A)-\mu(A)}\leq K+2\e$ for each $A\in\F$ and, consequently, $\mathcal{C}(\Omega)$ has the $\omega$-$\SVM$ property with $v(\omega,\mathcal{C}(\Omega))=K$.
\end{proof}

Although this was so simple, it is by no means obvious at the moment whether we can extend the result for countably infinite fields, that is, whether $c_0(\Gamma)$ and $\mathcal{C}(\Omega)$ have the $\om_1$-$\SVM$ property. It is not too helpful to know that for all finite algebras there is a~common constant, the Kalton--Roberts constant, which controls the distance from any $1$-additive function to the set of all vector measures. Since none of the infinite-dimensional spaces $c_0(\Gamma)$ and $\mathcal{C}(\Omega)$, with compact and metrisable $\Omega$, is complemented in its bidual (and therefore, complemented in any other Banach space), none of \ws\ topologies would make it possible to use standard compactness arguments. It turns out that the problem of determining $\tau(c_0(\Gamma))$ and $\tau(\mathcal{C}(\Omega))$ requires a~bit stronger machinery.

At this point let us note what can be deduced from the $\om$-$\SVM$ property via compactness arguments.
\begin{proposition}\label{bidual}
If a Banach space $X$ is $\lambda$-complemented in its bidual and has the $\om$-$\SVM$ property, then it has the $\SVM$\ property and $v(X)\leq\lambda v(\om,X)$.
\end{proposition}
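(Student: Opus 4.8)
The plan is to derive the full $\SVM$ property from the finite ($\om$-$\SVM$) one by a compactness argument staged in the bidual. First I would fix an arbitrary set algebra $\F$ and a $1$-additive map $\nu\colon\F\to X$, and let $\Lambda$ be the collection of all \emph{finite} subalgebras of $\F$, directed by inclusion. For each $\F_0\in\Lambda$ the restriction $\nu|_{\F_0}$ is a $1$-additive map on a finite algebra, so the $\om$-$\SVM$ property supplies a vector measure $\mu_{\F_0}\colon\F_0\to X$ with $\n{\nu(A)-\mu_{\F_0}(A)}\leq v(\om,X)$ for every $A\in\F_0$. In particular $\n{\mu_{\F_0}(A)}\leq\n{\nu(A)}+v(\om,X)$, so for each fixed $A\in\F$ the values $\mu_{\F_0}(A)$ (defined once $A\in\F_0$) remain in one fixed norm ball.

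The obstacle is that $X$ carries no topology in which such bounded nets converge, so I would move to $X^{\ast\ast}$, whose bounded sets are relatively \ws\ compact by the Banach--Alaoglu theorem. Fix an ultrafilter $\mathcal{U}$ on $\Lambda$ containing every tail $\{\F_0\in\Lambda : \F_0\supseteq\F_1\}$; this is possible since, $\Lambda$ being directed, these tails have the finite intersection property. For a fixed $A\in\F$ the set $\{\F_0 : A\in\F_0\}$ belongs to $\mathcal{U}$ and the values $\mu_{\F_0}(A)$ eventually lie in a single \ws\ compact ball, so the \ws\ limit
$$\w\mu(A)=\lim_{\mathcal{U}}\mu_{\F_0}(A)\in X^{\ast\ast}$$
is well defined. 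For disjoint $A,B\in\F$ the equality $\mu_{\F_0}(A\cup B)=\mu_{\F_0}(A)+\mu_{\F_0}(B)$ holds for all $\F_0$ in a tail set, and since vector addition is \ws\ continuous this passes to the limit, making $\w\mu\colon\F\to X^{\ast\ast}$ finitely additive. Likewise, \ws\ lower semicontinuity of the bidual norm yields $\n{\nu(A)-\w\mu(A)}\leq v(\om,X)$ for every $A\in\F$.

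Finally I would use the hypothesis to push $\w\mu$ back into $X$. Let $P\colon X^{\ast\ast}\to X$ be a projection with $P|_X=\id$ and $\n{P}\leq\lambda$, which exists precisely because $X$ is $\lambda$-complemented in its bidual, and set $\mu=P\circ\w\mu$; by linearity of $P$ this is a genuine $X$-valued vector measure on $\F$. Since $\nu(A)\in X$ gives $P\nu(A)=\nu(A)$, we have $\nu(A)-\mu(A)=P\bigl(\nu(A)-\w\mu(A)\bigr)$ and hence
$$\n{\nu(A)-\mu(A)}\leq\n{P}\,\n{\nu(A)-\w\mu(A)}\leq\lambda\,v(\om,X)\qquad(A\in\F).$$
As $\F$ and $\nu$ were arbitrary, this gives $X$ the $\SVM$ property with $v(X)\leq\lambda v(\om,X)$.

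The heart of the matter is the limiting step: there is no direct way to amalgamate the locally defined measures $\mu_{\F_0}$ into a single $X$-valued measure, and the only available source of compactness is the \ws\ topology of the bidual. The role of $\lambda$-complementation is exactly to render this detour harmless — it reabsorbs the limit from $X^{\ast\ast}$ back into $X$ at the cost of only the factor $\lambda$. The remaining verifications, namely that finite additivity and the norm bound survive the ultrafilter limit, are routine, resting respectively on joint \ws\ continuity of addition and on \ws\ lower semicontinuity of the norm.
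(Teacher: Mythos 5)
Your proposal is correct and follows essentially the same route as the paper's own proof: approximate $\nu$ over the directed family of finite subalgebras using the $\om$-$\SVM$ property, pass to a pointwise \ws\ limit in $X^{\ast\ast}$ (the paper extracts a convergent subnet from a pointwise \ws-compact set of functions, where you use an ultrafilter limit --- an inessential difference), and finally compose with a projection $P\colon X^{\ast\ast}\to X$ of norm at most $\lambda$. The one small repair needed: since $v(\om,X)$ is defined as an infimum that need not be attained, you cannot invoke measures $\mu_{\F_0}$ with $\n{\nu(A)-\mu_{\F_0}(A)}\leq v(\om,X)$ directly; instead fix an arbitrary $v>v(\om,X)$, run your argument to obtain $\n{\nu(A)-\mu(A)}\leq\lambda v$, and then let $v\downarrow v(\om,X)$, exactly as the paper does.
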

\begin{proof}
Let $X$ be a Banach space complemented in $X^{\ast\ast}$ and having the $\omega$-$\SVM$ property. Pick any number $v>v(\om,X)$. Suppose that $\F$ is an arbitrary algebra of sets and a~function $\nu\colon\F\to X$ is $1$-additive. Let $\Gamma$ be the set of all finite subfields of $\F$, directed by inclusion. For each $\A\in\Gamma$ there is a~vector measure $\mu_\A\colon\A\to X$ such that $\n{\nu(A)-\mu_\A(A)}\leq v$ for every $A\in\A$. We extend each $\mu_\A$ to the whole of $\F$ by putting $\mu_\A(A)=0$ for $A\in\F\setminus\A$. Then the net $\{\mu_\A\}_{\A\in\Gamma}$ is contained in the set $$\bigl\{f\colon\F\to X^{\ast\ast}\,\,\vert\,\,\n{f(A)}\leq\n{\nu(A)}+v\mbox{ for each }A\in\F\bigr\},$$which is compact with respect to the product topology of the \ws\ topology on $X^{\ast\ast}$ (as usual, we identify $X$ with its canonical copy inside $X^{\ast\ast}$). Hence, there is a~subnet of $\{\mu_\A\}_{\A\in\Gamma}$ which is pointwise convergent to some function $\mu\colon\F\to X^{\ast\ast}$. Obviously, $\mu$ is a~vector measure and for each $A\in\F$ we have $\n{\nu(A)-\mu(A)}\leq v$. Finally, let $P\colon X^{\ast\ast}\to X$ be a~projection with $\n{P}\leq\lambda$. Then, since $P(x)=x$ for $x\in X$, we have $\n{\nu(A)-(P\circ\mu)(A)}\leq\lambda v$ for $A\in\F$, which completes the proof.
\end{proof}

Combining this result with our earlier observation, that always $\tau(X)\geq\omega$, we infer that for Banach spaces which are complemented in their biduals we have the following dichotomy: either their $\SVM$ character equals $\om$, or they have the $\SVM$ property. For other Banach spaces, like $c_0(\Gamma)$ and $\mathcal{C}(\Omega)$, we will see that everything may happen.

Now, let us note the following trivial but useful observation. The proof is straightforward, so we omit it.
\begin{proposition}\label{comp}
Let $X$ be a Banach space and $Y$ be a~$\lambda$-complemented subspace of $X$. If $X$ has the $\kappa$-$\SVM$ property then so does $Y$ and, moreover, $v(\kappa,Y)\leq\lambda v(\kappa,X)$. Consequently, if $X$ has the $\SVM$ property then so does $Y$, and we have $v(Y)\leq\lambda v(X)$.
\end{proposition}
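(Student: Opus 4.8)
The plan is to reduce the problem for $Y$ to an application of the $\kappa$-$\SVM$ property that $X$ is assumed to have. The key observation is that a $1$-additive map into $Y$ can be regarded, via the inclusion $\iota\colon Y\hookrightarrow X$, as a $1$-additive map into $X$, where we can invoke the hypothesis to find an approximating vector measure. The only point requiring care is that the resulting measure takes values in $X$, not in $Y$, so I must push it back into $Y$ using the projection $P\colon X\to Y$ with $\n{P}\leq\lambda$.

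Concretely, suppose $\F$ is a set algebra with $\abs{\F}<\kappa$ and $\nu\colon\F\to Y$ satisfies \eqref{1}. Since the norm on $Y$ coincides with the norm inherited from $X$, the composition $\iota\circ\nu\colon\F\to X$ is again $1$-additive. Invoking the $\kappa$-$\SVM$ property of $X$, and fixing any $v>v(\kappa,X)$, we obtain a vector measure $\mu\colon\F\to X$ with $\n{(\iota\circ\nu)(A)-\mu(A)}\leq v$ for all $A\in\F$. Now set $\mu'=P\circ\mu\colon\F\to Y$; this is a vector measure into $Y$ because $P$ is a bounded linear operator and therefore preserves finite additivity. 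Since $\nu(A)\in Y$ gives $P(\iota(\nu(A)))=\nu(A)$, we estimate
\[
\n{\nu(A)-\mu'(A)}=\n{P\bigl((\iota\circ\nu)(A)-\mu(A)\bigr)}\leq\n{P}\cdot\n{(\iota\circ\nu)(A)-\mu(A)}\leq\lambda v
\]
for every $A\in\F$. As $v>v(\kappa,X)$ was arbitrary, this shows $Y$ has the $\kappa$-$\SVM$ property with $v(\kappa,Y)\leq\lambda v(\kappa,X)$.

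There is essentially no obstacle here, which is precisely why the authors call this observation trivial and omit the proof; the content is entirely in tracking the projection. The consequence for the (unrestricted) $\SVM$ property follows verbatim by the same argument with the cardinality restriction on $\F$ removed, yielding $v(Y)\leq\lambda v(X)$.
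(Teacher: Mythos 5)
Your proof is correct and is precisely the standard argument the paper has in mind when it omits the proof as ``straightforward'': view $\nu$ as an $X$-valued $1$-additive map, approximate it by a vector measure in $X$, and compose with the norm-$\lambda$ projection onto $Y$. You also handle the one genuine subtlety properly, namely working with an arbitrary $v>v(\kappa,X)$ so that the infimum definition of $v(\kappa,X)$ yields $v(\kappa,Y)\leq\lambda v(\kappa,X)$ rather than requiring the infimum itself to be attained.
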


\begin{theorem}\label{injective}
Every injective Banach space has the $\SVM$ property.
\end{theorem}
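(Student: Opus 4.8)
The plan is to reduce the statement to the fact that the spaces $\ell_\infty(\Gamma)$ enjoy the $\SVM$ property, and then to transfer this to an arbitrary injective space by complementation. Recall that an injective Banach space is $\lambda$-injective for some $\lambda<\infty$ (the existence of a uniform bound is well known), which means precisely that $X$ is $\lambda$-complemented in every Banach space containing it isometrically. Since $X$ embeds isometrically into $\ell_\infty(\Gamma)$, with $\Gamma=B_{X^\ast}$, via the canonical evaluation map $x\mapsto(\langle x^\ast,x\rangle)_{x^\ast\in\Gamma}$, injectivity provides a projection $P\colon\ell_\infty(\Gamma)\to X$ with $\n{P}\leq\lambda$. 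Thus, once we know that $\ell_\infty(\Gamma)$ has the $\SVM$ property, Proposition \ref{comp} immediately yields the $\SVM$ property for $X$ together with the estimate $v(X)\leq\lambda\, v(\ell_\infty(\Gamma))$.

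First I would verify that $\ell_\infty(\Gamma)$ has the $\SVM$ property for every index set $\Gamma$, by running the coordinate-wise argument from the proof of Proposition \ref{omega_c0}(i), now over an arbitrary (possibly infinite) field. Given a set algebra $\F$ and a $1$-additive map $\nu\colon\F\to\ell_\infty(\Gamma)$, each composition $e_\gamma^\ast\circ\nu\colon\F\to\R$ is again $1$-additive, so the Kalton--Roberts theorem (Theorem \ref{KR}) supplies an additive set function $\mu_\gamma\colon\F\to\R$ with $\abs{e_\gamma^\ast\nu(A)-\mu_\gamma(A)}\leq K$ for all $A\in\F$. Setting $\mu(A)=(\mu_\gamma(A))_{\gamma\in\Gamma}$ produces a candidate approximating measure, and the desired uniform estimate $\n{\nu(A)-\mu(A)}=\sup_{\gamma}\abs{e_\gamma^\ast\nu(A)-\mu_\gamma(A)}\leq K$ then holds for every $A\in\F$.

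The step deserving attention — and the only place the argument differs from the finite-field case treated in Proposition \ref{omega_c0}(i) — is checking that $\mu$ genuinely takes values in $\ell_\infty(\Gamma)$, that is, that the family $(\mu_\gamma(A))_{\gamma\in\Gamma}$ is bounded for each fixed $A\in\F$. This is guaranteed by the uniformity of the Kalton--Roberts constant across all coordinates: for every $\gamma$ we have $\abs{\mu_\gamma(A)}\leq\abs{e_\gamma^\ast\nu(A)}+K\leq\n{\nu(A)}+K$, whence $\mu(A)\in\ell_\infty(\Gamma)$ with $\n{\mu(A)}\leq\n{\nu(A)}+K$. Finite additivity of $\mu$ is inherited coordinate-wise from the additivity of the $\mu_\gamma$, so $\mu$ is indeed a vector measure and $\ell_\infty(\Gamma)$ has the $\SVM$ property with $v(\ell_\infty(\Gamma))=K$. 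Combining this with the reduction above, every injective Banach space $X$ has the $\SVM$ property, with $v(X)\leq\lambda K$ for any injectivity constant $\lambda$ of $X$.
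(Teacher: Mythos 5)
Your proof is correct and follows essentially the same route as the paper: apply the Kalton--Roberts theorem coordinate-wise to see that $\ell_\infty(\Gamma)$ has the $\SVM$ property, embed $X$ isometrically into some $\ell_\infty(\Gamma)$, use injectivity to obtain a bounded projection onto $X$, and conclude via Proposition \ref{comp}. Your only deviations are harmless: you fill in the (easy) check that $\mu(A)$ lands in $\ell_\infty(\Gamma)$, and your appeal to a uniform injectivity constant $\lambda$ is not actually needed, since injectivity applied to the single embedding $X\hookrightarrow\ell_\infty(\Gamma)$ (extending $\id_X$) already yields a projection of some finite norm.
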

\begin{proof}
By applying Theorem \ref{KR} to each coordinate separately, we see that for every non-empty set $\Gamma$ the space $\ell_\infty(\Gamma)$ has the $\SVM$ property. Moreover, it is well-known that every Banach space $X$ embeds into $\ell_\infty(\Gamma)$, where the cardinality of $\Gamma$ is the topological density of $X$, and if $X$ is injective then it must be complemented in $\ell_\infty(\Gamma)$. Hence, the assertion follows from Proposition \ref{comp}.
\end{proof}

Now, we wish to derive a `local' version of Proposition \ref{comp} and give the promised generalisation of Proposition \ref{omega_c0}. To this end let us recall some definitions.

Let $\mathcal{E}$ be a family of finite-dimensional Banach spaces. We say that a~Banach space $X$ is $\lambda$-{\it locally} $\mathcal{E}$ (for some $\lambda>1$) provided that every finite-dimensional subspace of $X$ is contained in a~finite-dimensional subspace $F$ of $X$ satisfying $d_\mathsf{BM}(E,F)<\lambda$ for some $E\in\mathcal{E}$ with $\dim E=\dim F$, where $d_\mathsf{BM}$ stands for the {\it Banach--Mazur distance} defined by $$d_\mathsf{BM}(E,F)=\inf\bigl\{\n{T}\cdot\n{T^{-1}}\colon T\mbox{ is an isomorphism }E\to F\bigr\}.$$We say that $X$ is {\it locally} $\mathcal{E}$ if it is $\lambda$-locally $\mathcal{E}$ for some $\lambda\geq 1$. The concept of locality (and colocality) was studied in the context of twisted sums of Banach spaces by Jebreen, Jamjoom and Yost \cite{jebreen_jamjoom_yost}. In the case where $\mathcal{E}=\{\ell_p^n\}_{n=1}^\infty$ ($1\leq p\leq\infty$) this notion defines nothing else but the class of $\mathscr{L}_p$-spaces introduced by Lindenstrauss and Pe\l czy\'nski in \cite{lindenstrauss_pelczynski} and investigated also by Lindenstrauss and Rosenthal in \cite{lindenstrauss_rosenthal}. 

We also say that a Banach space $X$ {\it contains} $\mathcal{E}$ $(c,d)$-{\it uniformly complemented} (for some $c\geq 1$, $d>1$) provided that for each $E\in\mathcal{E}$ one may find a~$c$-complemented subspace $F$ of $X$ with $\dim E=\dim F$ and $d_\mathsf{BM}(E,F)<d$. We say that $X$ {\it contains} $\mathcal{E}$ {\it uniformly complemented} if the above condition is valid for some $c\geq 1$ and $d>1$.

\begin{proposition}\label{locally}
Let $\mathcal{E}$ be a~family of finite-dimensional Banach spaces and $X$ be a~Banach space that is $\lambda$-locally $\mathcal{E}$. If there is a~Banach space $Y$ containing $\mathcal{E}$ $(c,d)$-uniformly complemented and satisfying $\tau(Y)>\omega$, then $\tau(X)>\omega$ and $v(\omega,X)\leq \lambda cdv(\omega,Y)$.

Therefore, by contraposition, if $X$ is a~Banach space that is locally $\mathcal{E}$ and $\tau(X)=\omega$, then $\tau(Y)=\omega$ for every Banach space $Y$ containing $\mathcal{E}$ uniformly complemented.
\end{proposition}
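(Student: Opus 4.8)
The plan is to reduce the $\om$-SVM property for $X$ to that of $Y$ by transporting a given nearly additive function through a chain of finite-dimensional isomorphisms. Fix a finite set algebra $\F$ and a $1$-additive map $\nu\colon\F\to X$. Since $\F$ is finite, the values of $\nu$ span a finite-dimensional subspace $V=\span\nu(\F)$ of $X$. Invoking the hypothesis that $X$ is $\lambda$-locally $\mathcal{E}$, I would enlarge $V$ to a finite-dimensional subspace $F_X\subseteq X$ for which there exist $E\in\mathcal{E}$ with $\dim E=\dim F_X$ and an isomorphism $T\colon E\to F_X$ satisfying $\n{T}\cdot\n{T^{-1}}<\lambda$.

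Next I would bring $E$ into $Y$. By the assumption that $Y$ contains $\mathcal{E}$ $(c,d)$-uniformly complemented, applied to this same $E$, there is a $c$-complemented subspace $F_Y\subseteq Y$ with $\dim F_Y=\dim E$, an isomorphism $S\colon E\to F_Y$ with $\n{S}\cdot\n{S^{-1}}<d$, and a projection $P\colon Y\to F_Y$ of norm $\n{P}\leq c$. Setting $\Phi=S\circ T^{-1}\colon F_X\to F_Y$, the composite $\w\nu=\Phi\circ\nu\colon\F\to F_Y\subseteq Y$ is $\n{\Phi}$-additive, because $\Phi$ is linear. By the $\om$-SVM property of $Y$ (which is exactly what $\tau(Y)>\om$ encodes), applied after rescaling, there is a vector measure $\w\mu\colon\F\to Y$ with $\n{\w\nu(A)-\w\mu(A)}\leq\n{\Phi}\,v(\om,Y)$ for all $A\in\F$.

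Finally I would transport the approximant back. Because $P$ is a projection onto $F_Y$ and $\w\nu$ already takes values in $F_Y$, the map $P\circ\w\mu\colon\F\to F_Y$ is a vector measure satisfying $\n{\w\nu(A)-(P\circ\w\mu)(A)}=\n{P(\w\nu(A)-\w\mu(A))}\leq c\n{\Phi}\,v(\om,Y)$. Applying the isomorphism $\Phi^{-1}=T\circ S^{-1}$ then yields the vector measure $\mu=\Phi^{-1}\circ P\circ\w\mu\colon\F\to F_X\subseteq X$, and since $\nu(A)=\Phi^{-1}(\w\nu(A))$ one obtains
\[
\n{\nu(A)-\mu(A)}\leq\n{\Phi^{-1}}\cdot c\n{\Phi}\,v(\om,Y)=c\,\n{\Phi}\n{\Phi^{-1}}\,v(\om,Y)<\lambda cd\,v(\om,Y),
\]
using $\n{\Phi}\n{\Phi^{-1}}\leq(\n{S}\n{S^{-1}})(\n{T}\n{T^{-1}})<\lambda d$. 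This establishes the $\om$-SVM property of $X$ with the asserted constant $v(\om,X)\leq\lambda cd\,v(\om,Y)$, and the contrapositive statement follows formally.

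The heart of the argument, and its only genuine limitation, is that it is confined to finite algebras: finiteness of $\F$ is precisely what guarantees that $\nu(\F)$ lies in a single finite-dimensional subspace, so that $\lambda$-locality can produce one common $E\in\mathcal{E}$ through which to funnel the whole transfer. The main point requiring care is the bookkeeping of constants across the two Banach--Mazur estimates and the projection, together with the elementary observation that $P$ fixes $F_Y$ pointwise so that $\w\nu$ is unaffected by $P$; the rescaling needed to pass from the $\n{\Phi}$-additive $\w\nu$ to a genuinely $1$-additive function (to which the definition of $v(\om,Y)$ literally applies) is routine by homogeneity, as is the passage to the infimum defining $v(\om,Y)$ via running the estimate with an arbitrary $v'>v(\om,Y)$.
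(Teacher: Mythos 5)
Your proof is correct and follows essentially the same route as the paper's: enlarge $\span\{\nu(A)\colon A\in\F\}$ via $\lambda$-locality to a finite-dimensional subspace identified with some $E\in\mathcal{E}$, transport $\nu$ into the $c$-complemented, $d$-isomorphic copy of $E$ in $Y$, apply the $\omega$-$\SVM$ property of $Y$ (after rescaling and with an arbitrary $v>v(\omega,Y)$), project back onto that copy, and return to $X$ through the inverse isomorphism. The only cosmetic difference is that the paper normalises the isomorphisms so that $\n{S^{-1}},\n{T_E^{-1}}\leq 1$, whereas you carry the Banach--Mazur products $\n{T}\n{T^{-1}}<\lambda$ and $\n{S}\n{S^{-1}}<d$ through the estimate; both yield the same constant $\lambda cd\,v(\omega,Y)$.
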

\begin{proof}
For each $E\in\mathcal{E}$ we may pick a~$c$-complemented subspace $Y_E$ of $Y$, $\dim E=\dim Y_E$, and an~isomorphism $T_E\colon E\to Y_E$ such that $\n{y}\leq\n{T_Ey}\leq d\n{y}$ for every $y\in E$.

Let $\F$ be any finite set algebra and $\nu\colon\F\to X$ be a~$1$-additive function. Then, there exist a~finite-dimensional space $F\subset X$ containing $\span\{\nu(A)\colon A\in\F\}$, a~space $E\in\mathcal{E}$, and an isomorphism $S\colon F\to E$ such that $\n{x}\leq{Sx}\leq\lambda\n{x}$ for every $x\in F$.

Take any number $v>v(\omega,Y)$. Since the function $T_ES\circ\nu\colon\F\to Y_E\subset Y$ is $\lambda d$-additive, there is a~vector measure $\mu\colon\F\to Y$ such that $\n{(T_ES\circ\nu)(A)-\mu(A)}\leq\lambda dv$ for each $A\in\F$. Now, let $\pi_E\colon Y\to Y_E$ be a~projection with $\n{\pi_E}\leq c$. Then $\pi_E\circ\mu\colon\F\to Y_E$ is a~vector measure such that $\n{(T_ES\circ\nu)(A)-(\pi_E\circ\mu)(A)}\leq\lambda vcd$ for each $A\in\F$. Finally, $S^{-1}T^{-1}\pi_E\circ\mu\colon\F\to F\subset X$ is also a~vector measure, and since $\n{S^{-1}}\leq 1$ and $\n{T^{-1}}\leq 1$, it satisfies $$\n{\nu(A)-(S^{-1}T^{-1}\pi_E\circ\mu)(A)}\leq\lambda cdv\quad\mbox{for each }A\in\F.$$ This shows that $X$ has the $\omega$-$\SVM$ property with $v(\omega,X)\leq\lambda cdv(\omega,Y)$.
\end{proof}

\begin{corollary}\label{linfty}
If $X$ is a $\mathscr{L}_\infty$-space then $\tau(X)>\omega$.
\end{corollary}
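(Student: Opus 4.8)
The plan is to recognise this statement as a direct application of Proposition \ref{locally} with the family $\mathcal{E}=\{\ell_\infty^n\}_{n=1}^\infty$. By the remark following the definition of locality, a $\mathscr{L}_\infty$-space is precisely a Banach space that is locally $\mathcal{E}$ for this choice of $\mathcal{E}$, i.e. $\lambda$-locally $\{\ell_\infty^n\}_{n=1}^\infty$ for some $\lambda\geq 1$ (and we may assume $\lambda>1$). Thus the hypothesis of Proposition \ref{locally} concerning $X$ is automatically fulfilled, and the whole task reduces to exhibiting a single Banach space $Y$ which contains $\mathcal{E}$ uniformly complemented and satisfies $\tau(Y)>\omega$.

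The natural candidate is $Y=c_0$. First I would check that $c_0$ contains $\{\ell_\infty^n\}_{n=1}^\infty$ $(1,d)$-uniformly complemented for every $d>1$. Indeed, the span $F_n$ of the first $n$ unit vectors $e_1,\dots,e_n$ is, with the supremum norm inherited from $c_0$, isometric to $\ell_\infty^n$, so that $d_\mathsf{BM}(\ell_\infty^n,F_n)=1<d$; moreover the coordinate restriction $(x_k)_k\mapsto(x_1,\dots,x_n,0,0,\dots)$ is a norm-one projection of $c_0$ onto $F_n$, whence each $F_n$ is $1$-complemented. Hence $Y=c_0$ witnesses the required containment with $c=1$ and an arbitrary $d>1$.

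Next I would invoke Proposition \ref{omega_c0}(i), which supplies $\tau(c_0)=\tau(c_0(\N))>\omega$ together with $v(\omega,c_0)=K$. Since $X$ is $\lambda$-locally $\{\ell_\infty^n\}$ and $Y=c_0$ contains this family $(1,d)$-uniformly complemented while satisfying $\tau(c_0)>\omega$, Proposition \ref{locally} applies verbatim and yields $\tau(X)>\omega$, along with the estimate $v(\omega,X)\leq\lambda cd\,v(\omega,c_0)=\lambda dK$ valid for every $d>1$; letting $d\downarrow 1$ gives the cleaner bound $v(\omega,X)\leq\lambda K$.

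There is essentially no hard step here: the argument is merely a packaging of Proposition \ref{locally} with the already-established fact that $\tau(c_0)>\omega$. The only point requiring a moment's care is the verification that $c_0$ carries uniformly (indeed isometrically and $1$-complementedly) the copies of the $\ell_\infty^n$, which is immediate from the coordinate structure. One could alternatively take $Y=\ell_\infty$, placing $\ell_\infty^n$ on the first $n$ coordinates and using that $\ell_\infty$ has the $\SVM$ property; but $c_0$ is the more economical choice and produces the sharper constant.
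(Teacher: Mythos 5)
Your proof is correct and takes essentially the same approach as the paper: the paper's proof is exactly an application of Proposition \ref{locally} with $\mathcal{E}=\{\ell_\infty^n\}_{n=1}^\infty$, differing only in that it takes $Y=\ell_\infty$ (which has the full $\SVM$ property by Theorem \ref{injective}) as the witness space instead of your $Y=c_0$. Since $\ell_\infty$ also contains the spaces $\ell_\infty^n$ isometrically and $1$-complementedly, with $v(\omega,\ell_\infty)\leq K$ by coordinatewise Kalton--Roberts, the two choices of witness are interchangeable and yield the same conclusion and essentially the same constant.
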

\begin{proof}
Since $X$ is locally $\{\ell_\infty^n\}_{n=1}^\infty$, the conclusion follows from Proposition \ref{locally} applied for $Y=\ell_\infty$.
\end{proof}

In particular, for every compact Hausdorff space $\Omega$ we have $\tau(\mathcal{C}(\Omega))>\omega$ as all the $\mathcal{C}(\Omega)$-spaces are $\mathscr{L}_\infty$-spaces, which follows from the partition of unity of compact Hausdorff spaces (see, {\it e.g.}, the remarks following \cite[Definition II.5.2]{lindenstrauss_tzafriri}). Therefore, Corollary \ref{linfty} yields a~generalisation of Proposition \ref{omega_c0} with a~less direct proof.
\section{Background on the three-space problem}
\noindent
This section has preparatory character. We will recall some definitions and facts from the $\mathsf{3SP}$ theory which are essential for our purposes. For more information the reader is referred to Castillo and Gonz\'ales \cite{castillo_gonzalez} and  Cabello S\'anchez and Castillo \cite{sanchez_castillo (dissertationes)}.

Let $X$, $Y$, $Z$ be Banach spaces. A short exact sequence is a diagram 
\begin{equation}\label{ex1}
\exi{i}{q}{Y}{Z}{X}
\end{equation}
where each arrow represents some operator and the image of each arrow coincides with the kernel of the following one. Therefore, $i$ is an injection and $q$ is a quotient operator. Moreover, $Y$ is isomorphic to a subspace of $Z$ and the Open Mapping Theorem ensures that $Z/Y\simeq X$ via an isomorphism $T\colon Z/Y\to X$ with $T\circ\pi=q$ ($\pi$ being the canonical map from $Z$ onto $Z/Y$). We say that two exact sequences of Banach spaces $\ex{Y}{Z_1}{X}$ and $\ex{Y}{Z_2}{X}$ are {\it equivalent} if there exists an operator $T\colon Z_1\to Z_2$ such that the diagram 
$$
\xymatrix{
0\ar[r] & Y\ar[r] \ar@{=}[d] & Z_1\ar[r] \ar[d]^T & X\ar[r] \ar@{=}[d] & 0\\ 
0\ar[r] & Y\ar[r] & Z_2\ar[r] & X\ar[r] & 0
}
$$
is commutative; by the `$3$-lemma' (\cite[\S 1.1]{castillo_gonzalez}) and the Open Mapping Theorem, $T$ must be then an isomorphism. For any two Banach spaces $X$ and $Y$ we have always the trivial exact sequence 
\begin{equation}\label{ex2}
\ex{Y}{Y\oplus X}{X}
\end{equation}
produced by the direct sum, jointly with the natural embedding and projection. We say that exact sequence \eqref{ex1} {\it splits} if and only if it is equivalent to \eqref{ex2}. Of course, in such a case we must have $Z\simeq X\oplus Y$. However, the converse implication is resoundingly false; one may even find a Banach space $X$ for which there is a non-splitting exact sequence of the form $\ex{X}{X\oplus X}{X}$. This topic is connected to Harte's problem (see \cite[\S 1.10]{castillo_gonzalez} for further discussion). The following well-known result gives a deeper insight into the splitting property of exact sequences (for the proof consult \cite[\S 1.1]{castillo_gonzalez}).
\begin{proposition}\label{prop_21}
Assume {\rm\eqref{ex1}} is an exact sequence of Banach spaces $X$, $Y$ and $Z$. Then the following conditions are equivalent each to the other:
\begin{itemize*}
\item[{\rm (i)}] exact sequence {\rm\eqref{ex1}} splits;
\item[{\rm (ii)}] the inclusion $i$ admits a retraction, i.e. an operator $r\colon Z\to Y$ with $r\circ i=\id_Y$;
\item[{\rm (iii)}] the quotient $q$ admits a section {\rm (}lifting{\rm )}, i.e. an operator $s\colon X\to Z$ with $q\circ s=\id_X$;
\item[{\rm (iv)}] the subspace $i(Y)$ is complemented in $Z$.
\end{itemize*} 
\end{proposition}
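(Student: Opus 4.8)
The plan is to establish the four equivalences by running through the cyclic chain (i) $\Rightarrow$ (ii) $\Rightarrow$ (iv) $\Rightarrow$ (iii) $\Rightarrow$ (i), so that a single loop yields all the implications at once. Throughout I write $j\colon Y\to Y\oplus X$ and $p\colon Y\oplus X\to X$ for the natural inclusion and projection attached to the trivial sequence \eqref{ex2}, and I record at the outset the two analytic facts I shall lean on: since $i(Y)=\ker q$ is closed and $i$ is an injective operator onto it, the Open Mapping Theorem makes $i^{-1}\colon i(Y)\to Y$ an operator; and a bounded bijection between Banach spaces is automatically an isomorphism.

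For (i) $\Rightarrow$ (ii): if \eqref{ex1} splits there is an isomorphism $T\colon Z\to Y\oplus X$ with $T\circ i=j$ and $p\circ T=q$. Letting $P_Y\colon Y\oplus X\to Y$ be the first coordinate projection, the operator $r=P_Y\circ T$ satisfies $r\circ i=P_Y\circ j=\id_Y$, so $i$ admits a retraction. For (ii) $\Rightarrow$ (iv): given such an $r$, the operator $P=i\circ r$ is idempotent, because $P\circ P=i\circ(r\circ i)\circ r=i\circ r=P$; its range is contained in $i(Y)$ and fixes $i(Y)$ pointwise, hence equals $i(Y)$. Thus $P$ is a projection of $Z$ onto $i(Y)$, and the subspace is complemented.

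For (iv) $\Rightarrow$ (iii): starting from a projection $P$ with range $i(Y)$, set $W=\ker P$, so that $Z=i(Y)\oplus W$ with $W$ a closed (hence Banach) subspace. The restriction $q|_W$ is injective, since $W\cap\ker q=W\cap i(Y)=\{0\}$, and surjective, since every $z\in Z$ splits as $z=u+w$ with $u\in i(Y)=\ker q$ and $w\in W$, whence $q(w)=q(z)$. By the second analytic fact $q|_W$ is an isomorphism onto $X$, and $s=(q|_W)^{-1}\colon X\to Z$ is the desired section. Finally, for (iii) $\Rightarrow$ (i): from a section $s$ I build $U\colon Y\oplus X\to Z$ by $U(y,x)=i(y)+s(x)$; this is an operator obeying $U\circ j=i$ and $q\circ U=p$, so it is a morphism from the trivial sequence \eqref{ex2} to \eqref{ex1} fixing $Y$ and $X$. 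The $3$-lemma then forces $U$ to be an isomorphism, and its inverse witnesses that \eqref{ex1} is equivalent to \eqref{ex2}, i.e. splits.

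The genuinely non-formal steps, and hence the only places where anything can go wrong, are the two appeals to completeness: extracting the operator $i^{-1}$ on $\ker q$ and inverting $q|_W$ both rest on the Open Mapping Theorem, and the passage (iii) $\Rightarrow$ (i) rests on the $3$-lemma to upgrade the bounded commuting map $U$ into an isomorphism. Everything else is the purely algebraic bookkeeping of idempotents and complementary subspaces. I would therefore organise the write-up around the cyclic chain above, flagging these three analytic inputs explicitly and leaving the idempotent verifications as one-line computations.
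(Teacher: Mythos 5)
Your proof is correct. Note that the paper itself does not prove this proposition: it is stated as a well-known fact with the proof deferred to Castillo--Gonz\'alez \cite[\S 1.1]{castillo_gonzalez}, so there is no in-paper argument to compare against; your cyclic chain (i) $\Rightarrow$ (ii) $\Rightarrow$ (iv) $\Rightarrow$ (iii) $\Rightarrow$ (i) is precisely the standard argument from that reference. All four implications check out: the idempotent $i\circ r$ in (ii) $\Rightarrow$ (iv), the Open Mapping Theorem inversion of $q|_W$ in (iv) $\Rightarrow$ (iii), and the $3$-lemma upgrade of $U(y,x)=i(y)+s(x)$ in (iii) $\Rightarrow$ (i) --- together with inverting $U$ so that the equivalence is witnessed by a map $Z\to Y\oplus X$ in the direction the paper's definition requires --- are exactly the right analytic inputs. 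One cosmetic remark: the fact you record at the outset, that $i^{-1}\colon i(Y)\to Y$ is an operator by the Open Mapping Theorem, is never actually used in any of your four implications, so you can delete it (or keep it only if you also prove the redundant direct implications such as (iv) $\Rightarrow$ (ii), where it would genuinely be needed to define $r=i^{-1}\circ P$).
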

By a {\it twisted sum} of Banach spaces $Y$ and $X$ we mean any Banach space $Z$ with a subspace isomorphic to $Y$ so that $Z/Y$ is isomorphic to $X$ (the order is important), that is, a~space $Z$ for which there is an exact sequence of the form \eqref{ex1}. By saying that two twisted sums of $X$ and $Y$ are {\it equivalent} we mean that the exact sequences they generate are equivalent in the former sense. The symbol $\Ext$ stands for the functor which assigns to every couple of Banach spaces all the equivalence classes of twisted sums of these spaces. In particular, $\Ext(X,Y)=0$ if and only if every twisted sum of $Y$ and $X$ is trivial, {\it i.e.} it is equivalent to $Y\oplus X$.

In the more general setting, for $F$-spaces, we may similarly consider the notions of: exact sequence, equivalence, splitting and twisted sum. We say that a pair $(X,Y)$ of two $F$-spaces {\it splits} (cf. \cite{kalton}) if whenever $Z$ is an $F$-space containing $Y$ such that $Z/Y\simeq X$, the exact sequence $\ex{Y}{Z}{X}$ splits. This is equivalent to the fact that for any $F$-space $Z$ every operator $T\colon X\to Z/Y$ admits a lifting, {\it i.e.} an operator $\w{T}\colon X\to Z$ such that $\pi\circ\w{T}=T$, where $\pi\colon Z\to Z/Y$ is the quotient map (see \cite[Theorem 3.1]{kalton}). Let us stress that for Banach spaces $X$ and $Y$ the condition that $(X,Y)$ splits is stronger than $\Ext(X,Y)=0$, since in the latter one we claim the splitting property only for these middle spaces $Z$ which are locally convex.
\begin{definition}[cf. \cite{kalton}, \cite{sanchez_castillo (duality)}, \cite{castillo}, \cite{castillo_gonzalez}]\label{quasi_linear}
Let $X$ and $Y$ be quasi-normed spaces. A mapping $f\colon X\to Y$ is called {\it quasi-linear} if it satisfies the two conditions:
\begin{itemize*}
\item[{\rm (i)}] $f(\lambda x)=\lambda f(x)$ for $\lambda\in\R$, $x\in X$;
\item[{\rm (ii)}] $\n{f(x+y)-f(x)-f(y)}\leq c(\n{x}+\n{y})$ for $x,y\in X$,
\end{itemize*}
where $c<\infty$ is a constant independent on $x$ and $y$. We denote by $\Delta(f)$ the least possible constant in condition (ii) and by $\Lambda(X,Y)$ the space of all quasi-linear maps from $X$ into~$Y$.

A mapping $f\colon X\to Y$ is called {\it zero-linear} if it satisfies condition (i) and also
\begin{itemize*}
\item[{\rm (iii)}] $\Bigl\| f\bigl(\sum_{i=1}^nx_i\bigr)-\sum_{i=1}^nf(x_i)\Bigr\|\leq C\sum_{i=1}^n\n{x_i}$ for $x_1,\ldots ,x_n\in X$,
\end{itemize*}
where $C<\infty$ is a constant independent on any choice of elements $x_1,\ldots ,x_n\in X$. We denote by $Z(f)$ the least possible constant in condition (iii) and by $\Xi(X,Y)$ the space of all zero-linear maps from $X$ into $Y$.
\end{definition}

Now, we proceed to connections between stability properties of quasi-linear maps and splitting properties of exact sequences. 
\begin{theorem}[Kalton \cite{kalton}]\label{st_Kalton}
Let $X$ and $Y$ be quasi-Banach spaces and let $X_0$ be a dense subspace of $X$. Then the following conditions are equivalent:
\begin{itemize*}
\item[{\rm (i)}] the pair $(X,Y)$ splits;
\item[{\rm (ii)}] if $f\in\Lambda(X_0,Y)$ then there exists a linear map $h\colon X_0\to Y$ and $L<\infty$ such that $$\n{f(x)-h(x)}\leq L\n{x}\quad\mbox{for }x\in X_0;$$
\item[{\rm (iii)}] there is a constant $B<\infty$ {\rm (}depending only on $X_0$ and $Y${\rm )} such that for every $f\in\Lambda(X_0,Y)$ there exists a linear map $h\colon X_0\to Y$ with $$\n{f(x)-h(x)}\leq B\cdot\Delta(f)\n{x}\quad\mbox{for }x\in X_0.$$
\end{itemize*}
\end{theorem}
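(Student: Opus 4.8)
The plan is to route everything through the standard dictionary between quasi-linear maps and twisted sums. To a quasi-linear map $f\colon X_0\to Y$ I would associate the space $Y\oplus_f X$, defined as the completion of $Y\times X_0$ under the quasi-norm $\n{(y,x)}_f=\n{y-f(x)}+\n{x}$; quasi-linearity of $f$ together with the quasi-triangle inequalities in $X$ and $Y$ guarantees that this is a genuine quasi-norm, and the maps $y\mapsto(y,0)$ and $(y,x)\mapsto x$ produce an exact sequence $\exi{i}{q}{Y}{Y\oplus_f X}{X}$. Since $X$ and $Y$ are quasi-Banach and local boundedness is a three-space property, every $F$-space twisted sum of the pair is again locally bounded, and by the Kalton--Ribe representation recalled in the Introduction it is equivalent to one of the form $Y\oplus_f X$. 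The one technical lemma I would isolate is: the sequence $Y\oplus_f X$ splits if and only if there is a linear map $h\colon X_0\to Y$ and $L<\infty$ with $\n{f(x)-h(x)}\leq L\n{x}$ on $X_0$.

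For that lemma, in one direction such an $h$ makes $s(x)=(h(x),x)$ into a bounded linear map $X_0\to Y\oplus_f X$ with $\n{s(x)}_f\leq(L+1)\n{x}$ and $q\circ s=\id$; density of $X_0$ then lets me extend $s$ to a bounded linear section on all of $X$, so the sequence splits by Proposition \ref{prop_21}. Conversely, a bounded linear section $\sigma$ must have the form $\sigma(x)=(g(x),x)$; linearity of $\sigma$ forces $g$ to be additive and homogeneous, hence linear, while boundedness of $\sigma$ yields $\n{g(x)-f(x)}\leq\n{\sigma}\n{x}$, so $h=g|_{X_0}$ does the job.

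Granting the lemma, (iii)$\Rightarrow$(ii) is immediate with $L=B\cdot\Delta(f)$. For (ii)$\Rightarrow$(i) I take an arbitrary $F$-space $Z$ with $Y\subseteq Z$ and $Z/Y\simeq X$, represent it as $Y\oplus_f X$ via Kalton--Ribe, use (ii) to trivialise $f$, and invoke the lemma to split $Z$; concretely, if $b$ is a bounded homogeneous section and $\ell$ a (possibly unbounded) linear section of the quotient map, then $f=b-\ell$ is the generating quasi-linear map and $\sigma=\ell+h=b-(f-h)$ is a bounded linear section, being a sum of linear maps that is dominated by the bounded maps $b$ and $f-h$. For (i)$\Rightarrow$(ii) I run the lemma in reverse: for every quasi-linear $f$ the space $Y\oplus_f X$ is a twisted sum of the pair, which splits by (i), whence $f$ is trivial.

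The real work is (ii)$\Rightarrow$(iii), the passage from ``every quasi-linear map is trivial'' to a uniform trivialisation constant. I would set this up as an open-mapping/closed-graph argument for quasi-Banach spaces: on the quotient $\Lambda(X_0,Y)/\mathcal{L}$ of quasi-linear maps modulo linear ones, consider the quasi-seminorm $\Delta(\cdot)$ on the one hand and, on the other, the ``distance to the linear maps in the bounded norm'', $\inf_{h\in\mathcal{L}}\sup_{x\neq 0}\n{f(x)-h(x)}/\n{x}$, which is finite everywhere precisely by (ii). Assertion (iii) is exactly the boundedness of the identity between these two structures, and this follows from the open mapping theorem once the space of trivial quasi-linear maps is equipped with a complete quasi-norm. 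I expect the main obstacle to lie precisely in this completeness step: one must verify that the relevant space of quasi-linear maps is complete so that a Baire-category argument applies, and here homogeneity together with the quasi-triangle inequality have to be used carefully to promote the pointwise control coming from (ii) to the uniform estimate $\n{f-h}\leq B\,\Delta(f)\n{\cdot}$.
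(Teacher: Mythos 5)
Two preliminary remarks. First, the paper itself contains no proof of Theorem \ref{st_Kalton}: it is quoted as background and attributed to Kalton \cite{kalton}, so your proposal can only be compared with the standard argument in the literature (Kalton's original proof, and the uniform-boundedness machinery of Cabello S\'anchez--Castillo \cite{sanchez_castillo (uniform)}, \cite{sanchez_castillo (dissertationes)}). Second, your handling of (i)$\Leftrightarrow$(ii) is essentially correct and is exactly the standard dictionary: the twisted sum $Y\oplus_fX$ built as the completion of $Y\times X_0$ under $\n{(y,x)}_f=\n{y-f(x)}+\n{x}$; the lemma that this sequence splits iff $f$ lies at finite distance from a linear map; the representation $f=b-\ell$ of an arbitrary locally bounded twisted sum (Theorem \ref{KP}); and the reduction from arbitrary $F$-spaces $Z$ to quasi-Banach ones via the three-space theorem for local boundedness, which is itself a theorem of the same paper of Kalton but is legitimate to quote. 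The small unproved point in your lemma --- that a bounded linear section of the completed sequence, restricted to $X_0$, takes values in the fibres $\{(y,x)\colon y\in Y\}$ --- needs the observation that $i(Y)$ is closed in the completion, but this is routine.

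The genuine gap is (ii)$\Rightarrow$(iii), which is the only part of the theorem with content beyond this formal dictionary, and which you present as a plan whose decisive step you explicitly leave unverified. Your open-mapping scheme is the right one, but as stated it cannot be run, for three concrete reasons. (a) $\Delta$ is only a seminorm on $\Lambda(X_0,Y)$ (it vanishes on every linear map), so the closed-graph/open-mapping argument must be set up not on $\Lambda(X_0,Y)$ but on a space where $\Delta$ separates points; the standard device is to fix a Hamel basis $B$ of $X_0$ and work with the space $\mathcal{Q}_B$ of quasi-linear maps vanishing on $B$, which meets each class modulo linear maps exactly once. (b) One must \emph{prove} that $(\mathcal{Q}_B,\Delta)$ is complete. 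This is where vanishing on $B$ is used: writing $x=\sum_i\lambda_ib_i$ and iterating quasi-linearity gives $\n{f(x)}\leq C_x\Delta(f)$ for $f\in\mathcal{Q}_B$, so $\Delta$-Cauchy sequences converge pointwise, and the pointwise limit is again in $\mathcal{Q}_B$ with $\Delta$-convergence. (c) One must also prove that the set of \emph{trivial} maps in $\mathcal{Q}_B$ is complete under $f\mapsto\Delta(f)+\inf_h\dist(f,h)$ (infimum over linear $h$); this requires an absolutely-convergent-series argument in which one picks near-optimal linear approximants $h_k$ for the terms $f_k$, notes that $\sum_k(f_k-h_k)$ converges uniformly on the unit ball while $\sum_kh_k$ converges pointwise to a map that is linear (a pointwise limit of linear maps is linear), and uses Aoki--Rolewicz to replace the quasi-norm of $Y$ by a $p$-norm so that the series make sense. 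Only after (a)--(c) does the open mapping theorem, applied to the identity between these two complete quasi-normed spaces (which coincide as sets precisely by (ii)), produce the uniform constant $B$ of (iii). Since you flag completeness as ``the main obstacle'' and do not resolve it, and since without it nothing forces the bound $\n{f-h}\leq B\,\Delta(f)\n{\cdot}$ to be uniform in $f$, the proposal does not yet prove (ii)$\Rightarrow$(iii) --- and this uniform statement is exactly what the paper later relies on quantitatively (Theorem \ref{quant}, Corollary \ref{cor_iff}).
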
\noindent
There were Cabello S\'anchez and Castillo \cite{sanchez_castillo (duality)} who adapted the theory of quasi-linear maps to the locally convex setting. In \cite{sanchez_castillo (uniform)} it is remarked that following the line of Kalton's proof of Theorem \ref{st_Kalton} one may obtain an analogous version for Banach spaces and zero-linear maps; we note it in the form below.
\begin{theorem}[Mainly Kalton \cite{kalton}]\label{stz_Kalton}
Let $X$ and $Y$ be Banach spaces and let $X_0$ be a dense subspace of $X$. Then the following conditions are equivalent:
\begin{itemize*}
\item[{\rm (i)}] $\Ext(X,Y)=0$;
\item[{\rm (ii)}] if $f\in\Xi(X_0,Y)$ then there exists a linear map $h\colon X_0\to Y$ and $L<\infty$ such that $$\n{f(x)-h(x)}\leq L\n{x}\quad\mbox{for }x\in X_0;$$
\item[{\rm (iii)}] there is a constant $B<\infty$ {\rm (}depending only on $X_0$ and $Y${\rm )} such that for every $f\in\Xi(X_0,Y)$ there exists a linear map $h\colon X_0\to Y$ with $$\n{f(x)-h(x)}\leq B \cdot Z(f)\n{x}\quad\mbox{for }x\in X_0.$$
\end{itemize*}
\end{theorem}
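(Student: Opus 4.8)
The plan is to follow the line of Kalton's proof of Theorem~\ref{st_Kalton}, with ``quasi-linear map'' replaced by ``zero-linear map'', ``quasi-Banach twisted sum'' by ``locally convex (Banach) twisted sum'' and ``the pair $(X,Y)$ splits'' by ``$\Ext(X,Y)=0$'', as indicated in~\cite{sanchez_castillo (uniform)}. The implication (iii)$\Rightarrow$(ii) is immediate, since for a fixed $f$ one may take $L=B\cdot Z(f)<\infty$ and keep the same $h$. The remaining work therefore splits into two parts: the correspondence (i)$\Leftrightarrow$(ii) between the vanishing of $\Ext(X,Y)$ and the triviality of every zero-linear map, and the automatic-uniformity implication (ii)$\Rightarrow$(iii).

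For (i)$\Leftrightarrow$(ii) I would set up the standard correspondence between zero-linear maps and exact sequences. Given $f\in\Xi(X_0,Y)$, equip $Y\times X_0$ with $\n{(y,x)}_f=\n{y-f(x)}_Y+\n{x}_X$; condition (iii) of Definition~\ref{quasi_linear} guarantees that $\n{\cdot}_f$ is equivalent to a genuine norm, i.e. that the resulting space is locally convex, so, as $X_0$ is dense in $X$, its completion $Y\oplus_f X$ is a Banach twisted sum sitting in an exact sequence $\exi{i}{q}{Y}{Y\oplus_f X}{X}$. Conversely, from any exact sequence \eqref{ex1} I would pick a bounded homogeneous section $\beta$ of the quotient map $q$ (available since $q$ is open) together with a linear section $\ell$ defined on $X_0$, and set $f=i^{-1}\circ(\beta-\ell)\colon X_0\to Y$. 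The essential point, and the reason the zero-linear rather than merely quasi-linear estimate appears, is that in the \emph{locally convex} space $Z$ one has $\n{\sum_{i=1}^n z_i}\leq\sum_{i=1}^n\n{z_i}$, which converts the boundedness of $\beta$ into the full $n$-variable inequality (iii) for $f$. A direct computation together with Proposition~\ref{prop_21}(iii) shows that the sequence splits exactly when $f$ lies at bounded distance from a linear map, that is, when $f$ is trivial; using the density of $X_0$ once more, this yields that $\Ext(X,Y)=0$ if and only if every $f\in\Xi(X_0,Y)$ satisfies~(ii).

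The heart of the matter, and what I expect to be the main obstacle, is the automatic-uniformity implication (ii)$\Rightarrow$(iii). For $f\in\Xi(X_0,Y)$ put $d(f)=\inf\{\sup_{x\neq 0}\n{f(x)-h(x)}/\n{x}\colon h\colon X_0\to Y\text{ linear}\}$, so that (ii) asserts $d(f)<\infty$ for every $f$, while (iii) asserts $d(f)\leq B\cdot Z(f)$. Both $Z$ and $d$ are seminorms on $\Xi(X_0,Y)$ vanishing on the space $\mathcal{L}$ of linear maps, and $d$ is insensitive to adding an element of $\mathcal{L}$, so I would pass to the quotient $\Xi(X_0,Y)/\mathcal{L}$. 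The plan is to prove that $(\Xi(X_0,Y)/\mathcal{L},Z)$ is a Banach space (this is where completeness of $Y$ enters) and that $d$ is lower semicontinuous for the norm $Z$. Granting these facts, under (ii) the sets $A_n=\{[f]\colon d(f)\leq n\}$ are closed, convex and balanced and cover the whole Banach space $\Xi(X_0,Y)/\mathcal{L}$; by the Baire category theorem some $A_n$ has nonempty interior and hence contains a ball $\{Z(f)\leq\delta\}$, and homogeneity of $Z$ and $d$ then upgrades this to $d(f)\leq(n/\delta)\,Z(f)$ for all $f$, which is precisely (iii) with $B=n/\delta$.

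The delicate ingredient here is the lower semicontinuity of $d$: if $Z(f_k-f)\to 0$ and each $f_k$ lies within $\n{\cdot}_\infty$-distance $\leq n$ of some linear $h_k$, one must produce a single linear $h$ witnessing $d(f)\leq n$. This cannot be done by a naive limit, because the maps $h_k$ need not converge pointwise (only the differences $f_k-h_k$ are uniformly bounded); instead I would fix a Hamel basis of $X_0$ and run the compactness-and-diagonalisation argument from Kalton's proof of Theorem~\ref{st_Kalton}, the zero-linear estimates ensuring that all the spaces involved stay locally convex. Once completeness and lower semicontinuity are secured, the Baire argument above gives (ii)$\Rightarrow$(iii), and together with the correspondence (i)$\Leftrightarrow$(ii) and the trivial (iii)$\Rightarrow$(ii) this establishes the equivalence of all three conditions.
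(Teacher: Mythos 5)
Your reconstruction of the correspondence (i)$\Leftrightarrow$(ii) is sound and is exactly the intended adaptation of Kalton's argument (zero-linearity makes the quasi-norm $\n{\cdot}_f$ equivalent to its convex envelope, as in the proof of Theorem \ref{k_injective}, and conversely local convexity of the middle space upgrades the quasi-linear estimate for $\beta-\ell$ to the zero-linear one); also your completeness claim for $(\Xi(X_0,Y)/\mathcal{L},Z)$ is correct and provable exactly as you indicate, by passing to representatives vanishing on a Hamel basis, where $Z$-convergence becomes pointwise convergence into the complete space $Y$. The paper offers no written proof of this theorem, quoting it from \cite{kalton} and \cite{sanchez_castillo (uniform)}, so the only issue is whether your (ii)$\Rightarrow$(iii) argument closes. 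It does not, as written: the gap is the claimed lower semicontinuity of $d$, i.e.\ the closedness of $A_n=\{[f]\colon d(f)\leq n\}$. Your proposed proof of it --- extract a limit of the linear maps $h_k$ by compactness and diagonalisation --- fails for a general Banach space $Y$: the sequences $(h_k(x))_k$ are merely norm-bounded, and bounded sets in $Y$ have no convergent subnets; the only limits available are weak$^\ast$ limits after embedding into $Y^{\ast\ast}$, so the limiting linear map takes values in $Y^{\ast\ast}$ and witnesses $\dist(f,h)\leq n$ only there. Unless $Y$ is complemented in $Y^{\ast\ast}$ there is no way back into $Y$; this is precisely the obstruction the paper emphasises (it is why Proposition \ref{bidual} and Theorem \ref{T1} carry the bidual-complementation hypothesis, and why the author remarks that for $c_0$ no ``standard compactness argument'' is available). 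Note also that, under (ii), lower semicontinuity of $d$ is essentially equivalent to (iii) itself (once (iii) holds, $d\leq B\cdot Z$ is $Z$-continuous), so taking it as a lemma is perilously close to circular.

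The repair is to avoid closedness of $A_n$ altogether and use a second complete space instead. Let $\mathcal{B}$ be the space of bounded homogeneous maps $g\colon X_0\to Y$ with $\n{g}_\infty=\sup_{\n{x}\leq 1}\n{g(x)}$; it is complete because $Y$ is, the subspace $\mathcal{B}\cap\mathcal{L}$ of bounded \emph{linear} maps is closed in it, and every $g\in\mathcal{B}$ is zero-linear with $Z(g)\leq 2\n{g}_\infty$. Hence the inclusion induces a bounded, injective linear map $\Phi\colon\mathcal{B}/(\mathcal{B}\cap\mathcal{L})\to\Xi(X_0,Y)/\mathcal{L}$ between Banach spaces, and assertion (ii) says exactly that $\Phi$ is surjective. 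The open mapping theorem then bounds $\Phi^{-1}$, which means that every $f\in\Xi(X_0,Y)$ can be written as $f=h+g$ with $h$ linear and $\n{g}_\infty\leq B\cdot Z(f)$ --- this is (iii). (Equivalently, keep your Baire sets $A_n=\Phi\bigl(n\,B_{\mathcal{B}/(\mathcal{B}\cap\mathcal{L})}\bigr)$, use Baire only to conclude that some $\overline{A_n}$ contains a ball, and run the standard open-mapping iteration, summing the bounded homogeneous corrections in $(\mathcal{B},\n{\cdot}_\infty)$; or note that $Z$ and $d$ are two comparable complete norms on $\Xi(X_0,Y)/\mathcal{L}$ and invoke the closed graph theorem.) With this replacement for the lower-semicontinuity step, your outline becomes a complete proof.
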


Given any locally bounded $F$-spaces $X$, $Y$ and any quasi-linear map $f\colon X\to Y$, we can construct a twisted sum of $Y$ and $X$, which algebraically is just $Y\oplus X$ and is equipped with the quasi-norm given by $\n{(y,x)}=\n{x}+\n{y-f(x)}$. We shall denote such a twisted sum by $Y\oplus_fX$ (cf. \cite{kalton_peck}). In the case where $X$ and $Y$ are Banach spaces and $f$ is zero-linear, $Y\oplus_fX$ is a Banach space being a representative for one of the members of $\Ext(X,Y)$. The stability properties exhibited in Theorems \ref{st_Kalton} and \ref{stz_Kalton} may be described in the language of a {\it distance} between quasi-linear maps (cf. \cite{sanchez_castillo (duality)}) which is defined as follows: for homogeneous maps $f$ and $g$, acting between the same quasi-normed spaces $X$ and $Y$, put $$\dist(f,g)=\sup\{\n{f(x)-g(x)}:\,\n{x}\leq 1\}$$(of course, it may happen that it equals $\infty$). In this language, two exact sequences $\ex{Y}{Y\oplus_fX}{X}$ and $\ex{Y}{Y\oplus_gX}{X}$ are equivalent if and only if the difference $f-g$ is trivial in the sense that for some linear mapping $h\colon X\to Y$ we have $\dist(f-g-h)<\infty$. In particular, the sequence $\ex{Y}{Y\oplus_fX}{X}$ splits if and only if $f$ is at finite distance from some linear mapping.

The following result is, by a clear mile, one of the most important results in the whole theory of the $\mathsf{3SP}$ problem:
\begin{theorem}[Kalton \& Peck \cite{kalton_peck}]\label{KP}
Let $X$ and $Y$ be locally bounded $F$-spaces. Then for every twisted sum $Z$ of $Y$ and $X$ there is an $f\in\Lambda(X,Y)$ such that the sequence $\ex{Y}{Z}{X}$ is equivalent to $\ex{Y}{Y\oplus_fX}{X}$.
\end{theorem}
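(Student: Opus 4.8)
The plan is to produce the quasi-linear map $f$ by comparing two sections of the quotient map $q$: one that is bounded and homogeneous but not linear, and one that is linear but not continuous. Fix the exact sequence $\exi{i}{q}{Y}{Z}{X}$. Since $q$ is a quotient operator between locally bounded $F$-spaces, the Open Mapping Theorem yields a constant $M<\infty$ such that every $x\in X$ has a preimage under $q$ of norm at most $M\n{x}$. Choosing one such preimage for each point of the unit sphere of $X$ (respecting antipodes, i.e. $B_0(-u)=-B_0(u)$) and extending by $B(x)=\n{x}\,B_0(x/\n{x})$ for $x\neq 0$ and $B(0)=0$ produces a homogeneous map $B\colon X\to Z$ with $q\circ B=\id_X$ and $\n{B(x)}\leq M\n{x}$. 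Independently, fixing an algebraic complement of $i(Y)$ in $Z$ gives a linear (generally discontinuous) section $L\colon X\to Z$ with $q\circ L=\id_X$. Since $q(B(x)-L(x))=x-x=0$, the difference lands in $\ker q=i(Y)$, so one may define $f\colon X\to Y$ by $f(x)=i^{-1}(B(x)-L(x))$.

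Next I would verify that $f\in\Lambda(X,Y)$. Homogeneity of $f$ is inherited from that of $B$ and $L$. For the quasi-linearity estimate, note that because $L$ is linear the $L$-terms cancel, so $f(x_1+x_2)-f(x_1)-f(x_2)$ equals $i^{-1}\bigl(B(x_1+x_2)-B(x_1)-B(x_2)\bigr)$. Using the boundedness of the embedding $i$ (so that $i^{-1}$ is bounded on $i(Y)$), the modulus of concavity of the quasi-norm on $Z$, and the bound $\n{B(x)}\leq M\n{x}$, one estimates the right-hand side by $c(\n{x_1}+\n{x_2})$ for a suitable $c<\infty$, establishing condition (ii) of Definition \ref{quasi_linear}.

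Finally I would exhibit the equivalence of the two sequences. Define $T\colon Z\to Y\oplus_fX$ by $T(z)=\bigl(i^{-1}(z-L(q(z))),\,q(z)\bigr)$, which is well-posed since $z-L(q(z))\in\ker q=i(Y)$. Linearity of $T$ follows from that of $q$, $L$ and $i^{-1}$. A direct computation, using $f(q(z))=i^{-1}(B(q(z))-L(q(z)))$, shows that the term $\n{y-f(x)}$ of the defining quasi-norm $\n{(y,x)}=\n{x}+\n{y-f(x)}$ of $Y\oplus_fX$, evaluated at $T(z)$, reduces to $\n{i^{-1}(z-B(q(z)))}$; hence $\n{T(z)}\leq c'\n{z}$ for some constant $c'<\infty$, by the bounds on $q$, $B$ and $i^{-1}$, so $T$ is an operator. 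One then checks that $T\circ i$ is the canonical inclusion $Y\to Y\oplus_fX$ and that the canonical projection $Y\oplus_fX\to X$ composed with $T$ equals $q$, so the relevant diagram commutes with the identity on both end terms. The `$3$-lemma' then forces $T$ to be an isomorphism, yielding the required equivalence.

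The step I expect to be the main obstacle is the construction of the bounded homogeneous section $B$: this is where completeness and local boundedness genuinely enter, through the Open Mapping Theorem, and where the homogenisation must be carried out with care, in particular choosing the section on the sphere antipodally so that $B(\lambda x)=\lambda B(x)$ holds for \emph{all} real $\lambda$ rather than merely the positive ones. By contrast, once $B$ and the linear section $L$ are in hand, the quasi-linearity of $f$ and the construction and continuity of $T$ are essentially formal manipulations governed by the quasi-norm inequalities.
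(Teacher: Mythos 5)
Your proposal is correct, and it is essentially the classical argument of Kalton and Peck themselves: subtract a linear (possibly discontinuous) section $L$ of $q$ from a bounded homogeneous section $B$ obtained via the Open Mapping Theorem, pull the difference back through $i^{-1}$ to get $f\in\Lambda(X,Y)$, and then check that $z\mapsto\bigl(i^{-1}(z-L(q(z))),q(z)\bigr)$ implements the equivalence. Note that the paper itself offers no proof of this statement --- it is quoted as a known result from \cite{kalton_peck} --- so there is no internal argument to compare against; your write-up would serve as a faithful reconstruction of the cited proof. One small point of wording: the boundedness of $i^{-1}$ on $i(Y)$ does not follow from the boundedness of $i$, as your parenthetical suggests, but from the inverse mapping theorem applied to the continuous bijection $i\colon Y\to i(Y)$, using that $i(Y)=\ker q$ is closed in the complete space $Z$; this is the same completeness/local boundedness input you correctly flag as essential in the construction of $B$.
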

A corresponding result in the locally convex setting reads as follows.
\begin{theorem}[Cabello S\'anchez \& Castillo \cite{sanchez_castillo (uniform)}]\label{KPz}
Let $X$ and $Y$ be Banach spaces. Then for every locally convex twisted sum $Z$ of $Y$ and $X$ there is an $f\in\Xi(X,Y)$ such that that the sequence $\ex{Y}{Z}{X}$ is equivalent to $\ex{Y}{Y\oplus_fX}{X}$.
\end{theorem}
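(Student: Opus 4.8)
The plan is to imitate the proof of the quasi-Banach version, Theorem~\ref{KP}, upgrading each pairwise estimate to a~uniform estimate over arbitrary finite sums; this upgrade is precisely where the local convexity of $Z$ is used. So fix a~locally convex twisted sum, that is, an~exact sequence $\exi{i}{q}{Y}{Z}{X}$ with $Z$ a~Banach space. Since $q$ is a~quotient operator, I may choose a~homogeneous bounded section $B\colon X\to Z$, meaning $q\circ B=\id_X$, $B(\lambda x)=\lambda B(x)$ and $\n{B(x)}\leq M\n{x}$ for some $M<\infty$; separately, lifting a~Hamel basis of $X$ through the surjection $q$ yields a~linear (generally discontinuous) section $L\colon X\to Z$ with $q\circ L=\id_X$. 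As $q(B(x)-L(x))=0$, the vector $B(x)-L(x)$ lies in $i(Y)$, so I define $f\colon X\to Y$ by $f(x)=i^{-1}(B(x)-L(x))$. Homogeneity of $f$ follows at once from that of $B$ and the linearity of $L$.

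The first substantive step is to verify $f\in\Xi(X,Y)$. Given $x_1,\dots,x_n\in X$, the linear contribution cancels because $L$ is additive, leaving
$$i\Bigl(f\bigl(\textstyle\sum_{i=1}^n x_i\bigr)-\sum_{i=1}^n f(x_i)\Bigr)=B\bigl(\textstyle\sum_{i=1}^n x_i\bigr)-\sum_{i=1}^n B(x_i).$$
Applying the ordinary triangle inequality in the Banach space $Z$ together with $\n{B(x)}\leq M\n{x}$ bounds the right-hand side by $2M\sum_{i=1}^n\n{x_i}$, so that $Z(f)\leq 2M\n{i^{-1}}$, a~constant independent of $n$. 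This is the heart of the matter: in the quasi-Banach setting of Theorem~\ref{KP}, iterating the quasi-triangle inequality degrades the constant and delivers only quasi-linearity, i.e. condition~(ii) for pairs, whereas the genuine triangle inequality available here lets the identical computation run for arbitrarily many summands and yields the stronger zero-linear condition~(iii).

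It then remains to produce an~equivalence of exact sequences, where $Y\oplus_f X$ carries the natural inclusion $i_f\colon y\mapsto(y,0)$ and projection $q_f\colon(y,x)\mapsto x$. Every $z\in Z$ decomposes uniquely and linearly as $z=i(y)+L(q(z))$ with $y=i^{-1}(z-L(q(z)))$, so
$$T\colon Z\longrightarrow Y\oplus_f X,\qquad T(z)=\bigl(i^{-1}(z-L(q(z))),\,q(z)\bigr)$$
is a~linear bijection with inverse $(y,x)\mapsto i(y)+L(x)$. A~direct computation gives $q_f\circ T=q$ and $T\circ i=i_f$ (using $q\circ i=0$ and $B(0)=L(0)=0$), so the defining diagram commutes. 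Writing $T(z)=(y,x)$ one checks $y-f(x)=i^{-1}(z-B(q(z)))$, whence $\n{T(z)}=\n{q(z)}+\n{i^{-1}(z-B(q(z)))}$ is dominated by a~multiple of $\n{z}$ via $\n{B(q(z))}\leq M\n{q}\,\n{z}$. For the reverse inequality the unboundedness of $L$ is sidestepped by rewriting $z=i(y-f(x))+B(x)$, giving $\n{z}\leq\n{i}\,\n{y-f(x)}+M\n{x}\leq C\n{T(z)}$. Hence $T$ is an~isomorphism and the two sequences are equivalent.

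I expect the sole genuine obstacle to be the zero-linearity estimate of the second paragraph, and it is overcome exactly by the hypothesis that $Z$ is locally convex; everything else is the routine bookkeeping already carried out in Theorem~\ref{KP}. It is worth recording the converse role of condition~(iii): zero-linearity of $f$ is precisely what makes the functional $\n{(y,x)}=\n{x}+\n{y-f(x)}$ equivalent to a~norm, so that $Y\oplus_f X$ is genuinely a~Banach space and the construction stays within the locally convex category.
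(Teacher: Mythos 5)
The paper does not actually prove this theorem; it is quoted verbatim from Cabello S\'anchez and Castillo \cite{sanchez_castillo (uniform)}, so there is no internal proof to compare against. Your argument is correct and is essentially the standard one behind the cited result: take a bounded homogeneous selection $B$ and a linear selection $L$ for the quotient $q$, set $f=i^{-1}(B-L)$, observe that the genuine (not quasi-) triangle inequality in the normable space $Z$ gives the estimate $\bigl\|f\bigl(\sum_{i=1}^n x_i\bigr)-\sum_{i=1}^n f(x_i)\bigr\|\leq 2M\n{i^{-1}}\sum_{i=1}^n\n{x_i}$ uniformly in $n$, i.e. zero-linearity rather than mere quasi-linearity, and implement the equivalence by the explicit map $T(z)=\bigl(i^{-1}(z-L(q(z))),q(z)\bigr)$, exactly as you did. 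The only point worth making explicit is that ``locally convex twisted sum'' means a locally convex, locally bounded complete space, which is normable by Kolmogorov's criterion, so your standing assumption that $Z$ is a Banach space is legitimate rather than a restriction.
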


Although quasi-linear and zero-linear maps have nothing in common with continuity, it is (surprisingly) possible to extend them from dense subspaces with preserving quasi- or zero-linearity. The following result is a consequence of Theorem \ref{KP}.
\begin{theorem}[Kalton \& Peck \cite{kalton_peck}]\label{extension}
Let $X$ and $Y$ be locally bounded $F$-spaces and $X_0$ a dense subspace of $X$. For every $f_0\in\Lambda(X_0,Y)$ there exists an extension $f\in\Lambda(X,Y)$ of $f_0$. Moreover, for every extension $g\in\Lambda(X,Y)$ of $f_0$ we have $\dist(f-g-h)<\infty$ for some linear map $h\colon X\to Y$.
\end{theorem}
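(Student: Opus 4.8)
The plan is to realise $f_0$ geometrically as a twisted sum and push it through Theorem \ref{KP}. First I would form the twisted sum $Z_0=Y\oplus_{f_0}X_0$, i.e. the vector space $Y\oplus X_0$ equipped with the quasi-norm $\n{(y,x)}=\n{y-f_0(x)}+\n{x}$; quasi-linearity of $f_0$ guarantees this is a genuine quasi-norm, and one obtains an exact sequence $\exi{i}{q}{Y}{Z_0}{X_0}$ with $i(y)=(y,0)$ and $q(y,x)=x$. Passing to the completion $Z$ of $Z_0$ (in which $Y$, being complete, stays closed) yields an exact sequence $\exi{i}{\w q}{Y}{Z}{X}$, the quotient being $X$ because $X_0$ is dense in $X$; checking that $\ker\w q=i(Y)$ and that $\w q$ is a quotient operator is a routine completion argument resting on the quasi-linearity of $f_0$. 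Now Theorem \ref{KP} supplies a map $f\in\Lambda(X,Y)$ together with an equivalence $T\colon Z\to Y\oplus_fX$ of exact sequences.

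Next I would read off from $T$ how $f$ relates to $f_0$. The assignment $x\mapsto(f_0(x),x)$ is a bounded homogeneous selection of $q$ over $X_0$, with $\n{(f_0(x),x)}=\n{x}$. Since $T$ fixes $Y$ and covers $\id_X$, one has $T(0,x)=(b(x),x)$ for $x\in X_0$, where $b\colon X_0\to Y$ is linear (it is the $Y$-coordinate of the linear map $x\mapsto T(0,x)$), and hence $T(f_0(x),x)=(f_0(x)+b(x),x)$. Boundedness of $T$ then gives $\n{f(x)-f_0(x)-b(x)}\leq\n{T}\n{x}$ for $x\in X_0$. Extending $b$ to a linear map on $X$ via a Hamel basis and subtracting it from $f$ turns $f$ into a map agreeing with $f_0$ up to the bounded homogeneous error $d:=(f-f_0)|_{X_0}$, where $\n{d(x)}\leq\n{T}\n{x}$. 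Extending $d$ by zero off $X_0$ keeps it bounded and homogeneous, hence quasi-linear, and subtracting this extension as well produces the desired $f\in\Lambda(X,Y)$ with $f|_{X_0}=f_0$ exactly. One checks at each step that subtracting a linear or a bounded homogeneous map preserves membership in $\Lambda(X,Y)$.

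For the "moreover" clause, let $g\in\Lambda(X,Y)$ be another extension and put $\phi=f-g\in\Lambda(X,Y)$, so that $\phi$ vanishes on the dense subspace $X_0$. By the equivalence criterion recalled in Section 3, the assertion $\dist(\phi-h)<\infty$ for some linear $h$ is exactly the statement that $Y\oplus_\phi X$ splits, so it suffices to prove this splitting. The key observation is that on the dense subspace $Y\oplus X_0$ the quasi-norm $\n{(y,x)}=\n{y-\phi(x)}+\n{x}$ coincides with the direct-sum norm $\n{y}+\n{x}$, because $\phi|_{X_0}=0$. As both $Y\oplus_\phi X$ and $Y\oplus X$ are complete and contain $Y\oplus X_0$ densely, the identity on $Y\oplus X_0$ extends to an isomorphism $Y\oplus X\to Y\oplus_\phi X$ that fixes $Y$ and covers $\id_X$; this is precisely an equivalence of the sequence $\ex{Y}{Y\oplus_\phi X}{X}$ with the trivial one. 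Hence $Y\oplus_\phi X$ splits and $\phi$ is at finite distance from a linear map, as required.

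I expect the main obstacle to be the existence half, on two counts: verifying that the completed sequence $\exi{i}{\w q}{Y}{Z}{X}$ is genuinely exact, and — more delicately — upgrading the map furnished by Theorem \ref{KP} from an extension \emph{up to a linear perturbation at finite distance} to a bona fide extension restricting to $f_0$ on the nose. The Hamel-basis correction together with the extend-by-zero trick handle the latter, but one must verify that quasi-linearity survives each correction and that the error term $d$ is controlled uniformly by $\Delta(f)$, $\Delta(f_0)$ and $\n{T}$. The "moreover" half, by contrast, is clean once one notices that vanishing of $\phi$ on a dense subspace forces the two quasi-norms to agree there.
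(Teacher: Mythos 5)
Your proposal is correct and takes exactly the route the paper itself indicates: the paper gives no independent proof but states the result as "a consequence of Theorem \ref{KP}", and your argument — completing $Y\oplus_{f_0}X_0$ to a twisted sum of $Y$ and $X$, invoking Theorem \ref{KP}, and then repairing the resulting quasi-linear map by subtracting a linear (Hamel-basis) extension of $b$ and the extend-by-zero bounded homogeneous error $d$ — is precisely that derivation with the details filled in. The ``moreover'' clause, obtained by noting that $\phi=f-g$ vanishes on $X_0$ so that the quasi-norms of $Y\oplus_\phi X$ and $Y\oplus X$ agree on the dense subspace $Y\oplus X_0$, forcing an equivalence with the trivial sequence, is likewise sound.
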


The zero-linear analogue may proved very much like the above theorem (by using Theorem \ref{KPz} instead of \ref{KP}), but even something more may be stated. The result below holds true also in a wider context of normed groups (cf. \cite{sanchez_castillo (dissertationes)}).
\begin{theorem}[Cabello S\'anchez \& Castillo \cite{sanchez_castillo (dissertationes)}]\label{Zextension}
Let $X$ and $Y$ be Banach spaces and $X_0$ a dense subspace of $X$. If $f_0\in\Xi(X_0,Y)$, then every quasi-linear extension $X\to Y$ of $f_0$ is zero-linear. 
\end{theorem}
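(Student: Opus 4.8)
The plan is to transfer the question to the twisted sum $Z:=Y\oplus_g X$ attached to the given quasi-linear extension $g\colon X\to Y$ of $f_0$, i.e. $Y\times X$ with the quasi-norm $\n{(y,x)}=\n{y-g(x)}+\n{x}$. The key observation is that zero-linearity of $g$ is exactly a \emph{local convexity} statement about $Z$: testing a uniform convexity inequality $\n{\sum_i z_i}\le M\sum_i\n{z_i}$ on the special vectors $z_i=(g(x_i),x_i)$, for which $\n{z_i}=\n{x_i}$, gives precisely $\n{g(\sum_i x_i)-\sum_i g(x_i)}\le M\sum_i\n{x_i}$. So it suffices to prove that $Z$ obeys such a convexity inequality with a constant independent of the number of summands, and the idea is to inherit it from the subspace $W:=Y\times X_0$ (with the induced quasi-norm, which is exactly that of $Y\oplus_{f_0}X_0$ since $g$ and $f_0$ agree on $X_0$), where the zero-linearity of $f_0$ is at our disposal.

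First I would record the convexity inequality on $W$. For $w_i=(y_i,x_i)$ with $x_i\in X_0$, a direct estimate using $\n{f_0(\sum_i x_i)-\sum_i f_0(x_i)}\le C\sum_i\n{x_i}$, where $C=Z(f_0)$, gives $\n{\sum_i w_i}\le (1+C)\sum_i\n{w_i}$, uniformly in the number of summands. Next I would check that $W$ is dense in $Z$. This is where some care is needed, since $g$ is typically discontinuous and a naive approximant produces uncontrollable terms: given $(y,x)\in Z$ and $x_0\in X_0$ with $\n{x-x_0}$ small, I would take the $Y$-coordinate to be $y':=y-g(x-x_0)$, so that $(y,x)-(y',x_0)=(g(x-x_0),\,x-x_0)$ has quasi-norm \emph{exactly} $\n{x-x_0}$. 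This choice annihilates the dangerous term and establishes density.

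Then I would propagate the convexity inequality from $W$ to all of $Z$. Since $\n{\cdot}$ is a quasi-norm with constant $K\le 1+\Delta(g)$, for a fixed $n$ and $z_1,\ldots ,z_n\in Z$ I would approximate each $z_i$ by $w_i\in W$ within $\eta$, apply the inequality on $W$, and let $\eta\to 0$; the $n$-dependent quasi-norm blow-up multiplies only the vanishing error $\n{\sum_i(z_i-w_i)}$, so in the limit one obtains $\n{\sum_i z_i}\le K(1+C)\sum_i\n{z_i}$ with a constant \emph{independent of $n$}. Specialising to $z_i=(g(x_i),x_i)$ then yields $Z(g)\le K(1+C)\le (1+\Delta(g))(1+Z(f_0))$, so $g$ is zero-linear. (Alternatively, once $Z$ is seen to be locally convex one may invoke Theorem \ref{KPz} to write the sequence as an $Y\oplus_F X$ with $F\in\Xi(X,Y)$; equivalence of twisted sums then forces $g-F$ to be linear plus bounded, whence $g$ is zero-linear again. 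The direct route avoids citing it.)

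The main obstacle, and the point where the hypotheses enter decisively, is the passage from $W$ to $Z$. Because quasi-linear maps are in general discontinuous, one cannot approximate the zero-linearity defect of $g$ directly; working instead inside $Z$ turns that defect into a value of a genuine quasi-norm, and choosing approximants in $W$ that kill the $Y$-coordinate error exactly lets density carry the convexity constant $1+C$ (up to the harmless factor $K$) to the whole space. This is precisely what makes the bound uniform in the number of summands, which is the entire content of zero-linearity.
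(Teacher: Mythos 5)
Your proof is correct, and it takes a genuinely different route from the one the paper points to. The paper states this theorem without proof, citing Cabello S\'anchez--Castillo; the argument indicated by the surrounding remarks (and the natural one) runs through the extension machinery: first produce \emph{some} zero-linear extension $f\in\Xi(X,Y)$ of $f_0$ (the zero-linear analogue of Theorem \ref{extension}, obtained via Theorem \ref{KPz}), then use the uniqueness clause of Theorem \ref{extension} to write any quasi-linear extension $g$ as $f$ plus a linear map plus a bounded map, and note that such perturbations preserve zero-linearity. You instead work inside the single twisted sum $Z=Y\oplus_gX$: you correctly identify zero-linearity of $g$ with a convexity inequality $\n{\sum_iz_i}\leq M\sum_i\n{z_i}$ for the quasi-norm of $Z$, uniform in the number of summands; you establish it with constant $1+Z(f_0)$ on the dense subspace $W=Y\oplus_{f_0}X_0$ (where $g(\sum_ix_i)=f_0(\sum_ix_i)$ because $X_0$ is a subspace); your density step is handled by the right trick, taking the approximant $\bigl(y-g(x-x_0),x_0\bigr)$ so that the error is $\bigl(g(x-x_0),x-x_0\bigr)$ with quasi-norm exactly $\n{x-x_0}$; and you then pass to all of $Z$ by a limiting argument in which the $n$-dependent quasi-norm constants multiply only vanishing errors. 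This is self-contained (it uses neither the extension theorems nor Theorem \ref{KPz}) and it produces an explicit bound on $Z(g)$, which the abstract route does not.

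One small quantitative correction. In the passage from $W$ to $Z$ you must also compare $\sum_i\n{w_i}$ with $\sum_i\n{z_i}$, and since a quasi-norm need not be continuous you can only assert $\n{w_i}\leq K\bigl(\n{z_i}+\eta\bigr)$, where $K\leq 1+\Delta(g)$ is the modulus of the quasi-norm; this is essentially sharp even for your special approximants, since for $z_i=(g(x_i),x_i)$ one only gets $\limsup\n{w_i}\leq(1+\Delta(g))\n{x_i}$. So the limit yields $\n{\sum_iz_i}\leq K^2(1+Z(f_0))\sum_i\n{z_i}$, hence $Z(g)\leq(1+\Delta(g))^2(1+Z(f_0))$ rather than the stated $(1+\Delta(g))(1+Z(f_0))$. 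This is immaterial for the theorem, which only requires some finite bound independent of the number of summands, but the claimed constant should be adjusted.
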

\section{Necessary conditions for the $\kappa$-$\SVM$ and the $\SVM$ properties}
\noindent
First, we give a condition which is necessary for a~Banach space, complemented in its bidual, to have the $\omega$-$\SVM$ property. The proof repeats the argument of the proof of \cite[Theorem 6.3]{kalton_roberts}. Recall that a~Banach space $Y$ is a~$\mathscr{L}_\infty$-{\it space} if there exists a~constant $c\geq 1$ such that every finite-dimensional subspace of $Y$ is contained in another finite-dimensional subspace $F$ of $Y$ such that $d_\mathsf{BM}(F,\ell_\infty^m)\leq c$, where $m=\dim F$.
\begin{theorem}\label{T1}
If $X$ is a Banach space complemented in its bidual, which has the $\omega$-{\rm SVM} property, then for every Banach space $Y$, which is a~$\mathscr{L}_\infty$-space, the pair $(Y,X)$ splits.
\end{theorem}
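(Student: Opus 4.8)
The plan is to verify the stability condition (ii) (in fact (iii)) of Theorem \ref{st_Kalton} for the pair $(Y,X)$: I want to show that every quasi-linear map $f\colon Y_0\to X$ out of a dense subspace $Y_0\subseteq Y$ lies at finite distance from a linear map. By Theorem \ref{extension} I may first extend $f$ to a quasi-linear map (still denoted $f$) defined on all of $Y$, and it suffices to approximate this extension, since a linear map close to $f$ on $Y$ is in particular close to $f$ on $Y_0$. The engine of the argument is the interplay between the $\om$-$\SVM$ property of $X$ and the local structure of the $\mathscr{L}_\infty$-space $Y$: every finite-dimensional subspace of $Y$ sits inside some finite-dimensional $F\subseteq Y$ admitting an isomorphism $T\colon\ell_\infty^m\to F$ (with $m=\dim F$) satisfying $\n{y}\leq\n{Ty}\leq c\n{y}$, where $c$ is the $\mathscr{L}_\infty$-constant of $Y$.

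First I work inside a single such $F$. On the algebra $2^{\{1,\ldots,m\}}$ I define the set function $\nu(A)=f(T\ind_A)$. Since $\ind_{A\cup B}=\ind_A+\ind_B$ for disjoint $A,B$ and $\n{T\ind_A}\leq c$, quasi-linearity of $f$ gives $\n{\nu(A\cup B)-\nu(A)-\nu(B)}\leq 2c\,\Delta(f)$, so $\nu$ is $(2c\,\Delta(f))$-additive and, after rescaling to a $1$-additive function, the $\om$-$\SVM$ property applies. It produces a vector measure $\mu\colon 2^{\{1,\ldots,m\}}\to X$ with $\n{\nu(A)-\mu(A)}\leq 2c\,\Delta(f)\,v(\om,X)=:b$ for every $A$. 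As $\mu$ is additive it is determined by its values on singletons, so $L^0(e_i)=\mu\{i\}$ extends to a linear map $L^0\colon\ell_\infty^m\to X$ with $L^0\ind_A=\mu(A)$; equivalently $L_F:=L^0T^{-1}\colon F\to X$ is linear and $\n{f(T\ind_A)-L_F(T\ind_A)}\leq b$ for all $A$.

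The crucial step is to upgrade this control from characteristic functions to arbitrary vectors, with a constant independent of $m$. Writing $g=f\circ T-L^0$ (quasi-linear, $\Delta(g)\leq c\,\Delta(f)$, with $\n{g(\ind_A)}\leq b$), and using homogeneity together with the decomposition into positive and negative parts to reduce to $y\in[0,1]^m$, I expand $y=\sum_{k\geq 1}2^{-k}\ind_{A_k}$ dyadically and sum the tails $T_j=\sum_{k\geq j}2^{-k}\ind_{A_k}$ from the smallest term upwards. Because $\n{T_{j+1}}\leq 2^{-j}$, each step contributes an error $\n{g(T_j)-g(2^{-j}\ind_{A_j})-g(T_{j+1})}\leq\Delta(g)\,2^{-j+1}$, and these errors form a convergent geometric series; combined with $\n{g(2^{-j}\ind_{A_j})}\leq 2^{-j}b$ this telescopes to a uniform bound $\n{g(y)}\leq C\n{y}$, whence $\n{f(x)-L_F(x)}\leq C\n{x}$ for $x\in F$ with $C$ depending only on $c$, $v(\om,X)$ and, proportionally, on $\Delta(f)$. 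I expect this geometric-decay telescoping (the heart of the Kalton--Roberts argument, where the decay is essential since an arbitrary decomposition into disjointly supported pieces would only give a constant growing with the number of pieces) together with the attendant \emph{continuity/density issue} --- the dyadic estimate is immediate on finitely supported dyadic combinations, but passing to all of $F$ requires a limiting argument, as quasi-linear maps need not be continuous --- to be the main obstacle; it is handled exactly as in the proof of \cite[Theorem 6.3]{kalton_roberts}.

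Finally I reassemble the local maps $L_F$ into a single global one, which is where the hypothesis that $X$ is complemented in its bidual enters, precisely as in Proposition \ref{bidual}. Directing the finite-dimensional subspaces $F\subseteq Y$ by inclusion and fixing an ultrafilter $\mathcal{U}$ refining this net, for each $x\in Y$ the elements $L_F(x)$ (defined once $F\ni x$) lie in a fixed ball of $X\subseteq X^{\ast\ast}$, which is \ws-compact; letting $h^{\ast\ast}(x)$ be the limit of $L_F(x)$ along $\mathcal{U}$ in the \ws-topology yields a well-defined map $h^{\ast\ast}\colon Y\to X^{\ast\ast}$, linear because each $L_F$ is linear on any fixed finite span eventually contained in $F$. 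Lower semicontinuity of the norm in the \ws-topology gives $\n{f(x)-h^{\ast\ast}(x)}\leq C\n{x}$, and composing with a bounded projection $P\colon X^{\ast\ast}\to X$ (using $P|_X=\id$ and $f(x)\in X$) produces a linear map $h=P\circ h^{\ast\ast}\colon Y\to X$ with $\n{f(x)-h(x)}\leq\n{P}\,C\n{x}$. Restricting $h$ to $Y_0$ verifies condition (iii) of Theorem \ref{st_Kalton} with a constant $B$ depending only on $X$ and $Y$, and therefore the pair $(Y,X)$ splits.
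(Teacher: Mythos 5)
Your proposal is correct and follows essentially the same route as the paper's proof: a local Kalton--Roberts-style approximation on the $\ell_\infty^m$-blocks of the $\mathscr{L}_\infty$-space (applying the $\om$-$\SVM$ property to $A\mapsto f(T\ind_A)$, extending the resulting measure linearly, then upgrading the bound from characteristic functions to all vectors by the dyadic argument), followed by a \ws-compactness limit of the local linear maps into $X^{\ast\ast}$ and composition with a bounded projection onto $X$, concluding via Theorem \ref{st_Kalton}. The only differences are cosmetic: you extend from a dense subspace via Theorem \ref{extension} whereas the paper works with maps defined on all of $Y$, you use an ultrafilter limit instead of a subnet, and you defer the dimension-independent limiting step to \cite[Theorem 6.3]{kalton_roberts} --- which is precisely the argument the paper reproduces (applying the crude, dimension-dependent but homogeneous bound to the remainder $t-s_m$ of norm $\leq 2^{-m}$ and letting $m\to\infty$).
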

\begin{proof}
First, we shall prove that for any finite set $\Om$, any fixed $v>v(\om,X)$, and any quasi-linear map $h\colon\ell_\infty(\Om)\to X$, with $\delta=\Delta(h)$, there exists a~linear map $H\colon\ell_\infty(\Om)\to X$ such that
\begin{equation}\label{T11}
\n{h(t)-H(t)}\leq 4\delta(2+v)\n{t}\quad\mbox{for }t\in\ell_\infty(\Om).
\end{equation}
To this end observe that the function $\nu\colon 2^\Om\to X$ given by $\nu(A)=h(\ind_A)$ satisfies $$\n{\nu(A\cup B)-\nu(A)-\nu(B)}\leq 2\delta\quad\mbox{for }A,B\in 2^\Om, A\cap B=\varnothing .$$Hence, for any fixed $v>v(\om,X)$, there is a~vector measure $\mu\colon 2^\Om\to X$ such that $$\n{h(\ind_A)-\mu(A)}\leq 2\delta v\quad\mbox{for }A\in 2^\Om .$$Let $H\colon\ell_\infty(\Om)\to X$ be the natural linear extension of $\mu$. Then $r=h-H$ is quasi-linear and $\n{r(\ind_A)}\leq 2\delta v$ for each $A\in 2^\Om$. We will show that $\n{r(t)}\leq 4\delta(2+v)\n{t}$ for $t\in\ell_\infty(\Om)$, which gives \eqref{T11}.

If $t\in\ell_\infty(\Om)$, then $t=\sum_{\alpha\in\Om}t(\alpha)\mathrm{e}_\alpha$, where $\mathrm{e}_\alpha=\ind_{\{\alpha\}}$. Using the inequality that defines quasi-linear maps $(\abs{\Om}-1)$ times, and using the fact that $\n{\sum_{\alpha\in A}t(\alpha)\mathrm{e}_\alpha}\leq\n{t}$ for each $A\subset\Om$, we obtain $$\Biggl\|r(t)-\sum_{\alpha\in\Om}t(\alpha)r(\mathrm{e}_\alpha)\Biggr\|\leq 2(\abs{\Om}-1)\delta\n{t},$$thus $$\n{r(t)}\leq 2\bigl(\abs{\Om}+v\abs{\Om}-1\bigr)\delta\n{t}.$$

Now, for $0\leq t\leq\ind_\Om$ and each $m\in\N$ we may find $A_1,\ldots ,A_m\in 2^\Om$ such that $$\Biggl\|t-\sum_{k=1}^m{1\over 2^k}\ind_{A_k}\Biggr\|\leq 2^{-m}.$$Moreover, using quasi-linearity of $r$ recursively, we get
$$\Biggl\|r\Biggl(\sum_{k=1}^m{1\over 2^k}\ind_{A_k}\Biggr)-\sum_{k=1}^m{1\over 2^k}r(\ind_{A_k})\Biggr\|\leq\delta\sum_{k=1}^m{k\over 2^k}\leq 2\delta ,$$hence $$\Biggl\|r\Biggl(\sum_{k=1}^m{1\over 2^k}\ind_{A_k}\Biggr)\Biggr\|\leq 2\delta(1+v).$$
Consequently, $$\n{r(t)}\leq 2\delta(1+v)+2^{-m+1}\delta\bigl(\abs{\Om}+v\abs{\Om}-1\bigr)+\delta(1+2^{-m})$$and, letting $m\to\infty$, we conclude that $$\n{r(t)}\leq  \delta(3+2v).$$

Finally, for every $t\in\ell_\infty(\Om)$ with $t=t^+-t^-$, where $t^+,t^-\geq 0$ and $\n{t^+}, \n{t^-}\leq\n{t}$, we have 
$$\n{r(t)}\leq\delta(\n{t^+}+\n{t^-})+\n{r(t^+)}+\n{r(t^-)}\leq 4\delta(2+v)\n{t},$$
which yields \eqref{T11}.

Now, let $Y$ be a $\mathscr{L}_\infty$-space and $f\colon Y\to X$ be a~quasi-linear map with $\Delta=\Delta(f)$. Then there is a constant $c\geq 1$ such that for every finite-dimensional subspace $F\subset Y$ there is a~finite-dimensional subspace $G\supset F$ of $Y$ and a~linear isomorphism $T\colon G\to\ell_\infty^m$ with $\n{T}\cdot\n{T^{-1}}\leq c$.

Let $F\subset Y$ be an arbitrary finite-dimensional space and choose $G$ as above. Then $f\circ T^{-1}$ is quasi-linear on $\ell_\infty^m$ with $\Delta(f\circ T^{-1})\leq\n{T^{-1}}\Delta$. By the first part of the proof, for any $v>v(\om,X)$, there is a linear mapping $H\colon\ell_\infty^m\to X$ satisfying $$\n{(f\circ T^{-1})(t)-H(t)}\leq 4\Delta (2+v)\n{T^{-1}}\n{t}\quad\mbox{for }t\in\ell_\infty^m.$$Define $H_F\colon Y\to X$ by $H_F(y)=H(Ty)$ for $y\in F$ and $H_F(y)=0$ for $y\in Y\setminus F$. 

Let $\Ra$ be the family of all finite-dimensional subspaces of $Y$. Then $(\Ra,\subset)$ is a~directed set and $(H_F)_{F\in\Ra}$ is a~net of functions such that for each $y\in Y$ we have $$\n{H_F(y)}\leq 4\Delta (2+v)\n{T}\n{T^{-1}}\n{y}+\n{f(y)},$$where $T$ is the linear isomorphism matched with the subspace $F$ as above. Therefore, $$\n{H_F(y)}\leq 4c\Delta (2+v)\n{y}+\n{f(y)}\quad\mbox{for }y\in Y,$$that is, for each $y\in Y$ the set $\{ H_F(y):\, F\in\Ra\}$ is embedded into the $w^\ast$-compact unit ball of $X^{\ast\ast}$. Hence, we may choose a subnet of $(H_F)_{F\in\Ra}$ which is pointwise convergent (in the \ws\ topology of $X^{\ast\ast}$) to a~linear map $h\colon Y\to X^{\ast\ast}$. Clearly, $$\n{f(y)-h(y)}\leq 4c\Delta (2+v)\n{y}\quad\mbox{for }y\in Y.$$Now, let $P\colon X^{\ast\ast}\to X$ be a~bounded projection. Then, since $Pf(y)=f(y)$ for each $y\in Y$, we have $$\n{f(y)-(P\circ h)(y)}\leq 4c\Delta (2+v)\n{P}\n{y}\quad\mbox{for }y\in Y,$$and $P\circ h\colon Y\to X$ is a linear map. In view of Theorem \ref{st_Kalton}, the proof is completed.
\end{proof}

The next result gives another necessary condition which is a weaker variation of the preceding one but omits the complementability assumption. In the first assertion the space $\ell_\infty(\Gamma)$ plays a~role of a~testing space. We shall see in Section 6 that there is a~whole class of such testing spaces, which includes $\ell_\infty(\Gamma)$, and the idea of the subsequent proof will be used there once again.
\begin{theorem}\label{T2}
Let $\Gamma$ be a cardinal number and $X$ be a~Banach space. Then:
\begin{itemize*}
\item[{\rm (i)}] if $X$ has the $\bigl({2^\Gamma}\bigr)^+$-$\SVM$ property then the pair $(\ell_\infty(\Gamma),X)$ splits;
\item[{\rm (ii)}] if $X$ has the~$\Gamma^+$-$\SVM$ property then the pair $(c_0(\Gamma),X)$ splits.
\end{itemize*}
\end{theorem}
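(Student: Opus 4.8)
The plan is to deduce both splittings from the stability criterion of Theorem \ref{st_Kalton}, applied with the ambient quotient space being $\ell_\infty(\Gamma)$ in (i) and $c_0(\Gamma)$ in (ii), and with the subspace being $Y=X$. In each case it then suffices to fix a convenient dense subspace $X_0$ and to show that an arbitrary quasi-linear map $f\colon X_0\to X$, with $\delta=\Delta(f)$, lies at finite distance from some linear map $H\colon X_0\to X$; in fact I aim for the uniform bound $\n{f(t)-H(t)}\leq 4\delta(2+v)\n{t}$ familiar from the proof of Theorem \ref{T1}. For (i) I take $X_0$ to be the simple functions on $\Gamma$ (those assuming finitely many values), and for (ii) the finitely supported functions $c_{00}(\Gamma)$; both are dense. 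As in Theorem \ref{T1}, the linear approximant $H$ will be manufactured by feeding the nearly additive set function $A\mapsto f(\ind_A)$ into the appropriate $\SVM$ hypothesis and extending the resulting vector measure linearly. The role of the two different cardinals is precisely to make the underlying set algebra small enough to be admissible.

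For (i), define $\nu\colon 2^\Gamma\to X$ by $\nu(A)=f(\ind_A)$; since $\n{\ind_A}=\n{\ind_B}=1$ for disjoint nonempty $A,B$, quasi-linearity shows that $\nu$ is $2\delta$-additive. As $\card(2^\Gamma)=2^\Gamma<(2^\Gamma)^+$, the $(2^\Gamma)^+$-$\SVM$ property furnishes, for any fixed $v>v((2^\Gamma)^+,X)$, a vector measure $\mu\colon 2^\Gamma\to X$ with $\n{f(\ind_A)-\mu(A)}\leq 2\delta v$ for every $A\subseteq\Gamma$. Letting $H$ be the linear extension of $\mu$ and putting $r=f-H$, I obtain a quasi-linear map with $\Delta(r)\leq\delta$ and, crucially, $\n{r(\ind_A)}\leq 2\delta v$ for all $A\subseteq\Gamma$. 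It remains to prove $\n{r(t)}\leq 4\delta(2+v)\n{t}$ for every simple $t$.

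Here lies the main obstacle, and the reason the finite-dimensional argument of Theorem \ref{T1} cannot simply be quoted: a simple function on an infinite $\Gamma$ need not be finitely supported, so the crude estimate $\n{r(t)}\leq 2(\abs{\Gamma}+v\abs{\Gamma}-1)\delta\n{t}$ that controlled the dyadic tail in Theorem \ref{T1} is now vacuous. To repair this I would run the same dyadic approximation but organise the bound around the quantity $M_n=\sup\{\n{r(s)}\colon s\text{ simple},\,0\leq s\leq\ind_\Gamma,\,s\text{ takes at most }n\text{ values}\}$. Peeling off the at most $n$ level sets of $s$ one at a time shows $M_n\leq 2n\delta(1+v)<\infty$, which supplies the missing a priori finiteness. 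The key observation is that dyadic truncation to $m$ digits sends a simple function with at most $n$ values to another such function, leaving a remainder of the form $2^{-m}s'$ with $s'$ again taking at most $n$ values; combining this with the recursive dyadic estimate $\n{r(\sum_{k=1}^m 2^{-k}\ind_{A_k})-\sum_{k=1}^m 2^{-k}r(\ind_{A_k})}\leq 2\delta$ yields $M_n\leq 2\delta(1+v)+2^{-m}M_n+\delta(1+2^{-m})$. Since $M_n$ is finite, letting $m\to\infty$ gives $M_n\leq\delta(3+2v)$ uniformly in $n$, hence $\n{r(s)}\leq\delta(3+2v)$ for every nonnegative simple $s$ in the unit ball. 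The decomposition $t=t^+-t^-$ then upgrades this to $\n{r(t)}\leq 4\delta(2+v)\n{t}$, and Theorem \ref{st_Kalton} finishes (i).

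For (ii) the support restriction works in our favour. I would let $\mathcal{A}$ be the finite--cofinite algebra on $\Gamma$, of cardinality $\Gamma<\Gamma^+$, and define $\nu\colon\mathcal{A}\to X$ by $\nu(F)=f(\ind_F)$ on finite $F$, by $\nu(\Gamma\setminus F)=-f(\ind_F)$ on cofinite sets, and by $\nu(\Gamma)=0$; a short case check (using that two cofinite sets cannot be disjoint) shows $\nu$ is $2\delta$-additive. The $\Gamma^+$-$\SVM$ property then yields a vector measure $\mu\colon\mathcal{A}\to X$ with $\n{f(\ind_F)-\mu(F)}\leq 2\delta v$ for finite $F$, where $v>v(\Gamma^+,X)$. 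Extending $\mu$ on singletons linearly to $H\colon c_{00}(\Gamma)\to X$ and setting $r=f-H$, I get $\Delta(r)\leq\delta$ and $\n{r(\ind_F)}\leq 2\delta v$ for every finite $F$. Now every $t\in c_{00}(\Gamma)$ is supported on some finite $\Omega_0\subseteq\Gamma$, so $r$ restricted to $\ell_\infty(\Omega_0)$ is exactly in the situation of the first part of the proof of Theorem \ref{T1}, whose final estimate $\n{r(t)}\leq 4\delta(2+v)\n{t}$ does not depend on $\abs{\Omega_0}$. Thus $f$ is within $4\delta(2+v)$ of the linear map $H$, and Theorem \ref{st_Kalton} gives that $(c_0(\Gamma),X)$ splits. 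The contrast between the two parts is precisely that (i) forces us to approximate set functions on the full power set, whence the cardinal $(2^\Gamma)^+$, while in (ii) only finite sets are ever evaluated, whence the far smaller $\Gamma^+$.
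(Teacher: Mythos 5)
Correct, and essentially the paper's own proof: you use the same dense subspaces (step functions in (i), $c_{00}(\Gamma)$ in (ii)), the same set algebras ($2^\Gamma$ and the finite--cofinite algebra), the same reduction to Theorem \ref{st_Kalton} via the linear extension of the approximating vector measure, and the same dyadic-tail argument resting on the observation that the dyadic remainder of a step function with $m$ values is again a step function with at most $m$ values. Your $M_n$-supremum bookkeeping in part (i) merely repackages the paper's direct estimate $\n{r(t)}\leq 2\bigl(m+vm-1\bigr)\delta\n{t}$ applied to that remainder, so the two arguments coincide.
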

\begin{proof}
(i): Fix any number $v>v\bigl(\bigl(2^\Gamma\bigr)^+,X\bigr)$ and let $Y_0\subset\ell_\infty(\Gamma)$ be the (dense) subspace consisting of all step functions. We shall show that for each quasi-linear map $f\colon Y_0\to X$, with $\delta=\Delta(f)$, there is a linear map $h\colon Y_0\to X$ satisfying
\begin{equation}\label{T21}
\n{f(t)-h(t)}\leq 4\delta(2+v)\n{t}\quad\mbox{for }t\in Y_0.
\end{equation}
Arguing as above, we infer that there is a vector measure $\mu\colon 2^\Gamma\to X$ such that $$\n{f(\ind_A)-\mu(A)}\leq 2\delta v\quad\mbox{for }A\in 2^\Gamma .$$Let $h\colon Y_0\to X$ be the natural linear extension of $\mu$ to the space of all step functions. Then $r=f-h$ is quasi-linear and $\n{r(\ind_A)}\leq 2\delta v$ for each $A\in 2^\Gamma$.

Every $t\in Y_0$ has the form $t=\sum_{j=1}^mt_j\ind_{A_j}$ for some $m\in\N$, $t_j\in\R$ and pairwise disjoint sets  $A_j\subset\Gamma$. Therefore, $$\n{r(t)}\leq 2\bigl(m+vm-1\bigr)\delta\n{t}.$$

For $t\in Y_0$, $0\leq t\leq\ind_{\Gamma}$, and each $n\in\N$ we may find $B_1,\ldots ,B_n\in 2^\Gamma$ such that $$\Biggl\|t-\sum_{k=1}^n{1\over 2^k}\ind_{B_k}\Biggr\|\leq 2^{-n}.$$As in the preceding proof, we may show that $$\n{r(t)}\leq 2\delta(1+v)+2^{-n+1}\bigl(m+vm-1\bigr)+\delta(1+2^{-n}).$$Letting $n\to\infty$, we conclude that $\n{r(t)}\leq  \delta(3+2v(X))$, and by splitting an~arbitrary $t\in Y_0$ into its positive and negative parts we get inequality \eqref{T21}.

\vspace*{2mm}
(ii): Now, fix any number $v>v(\Gamma^+,X)$ and let $c_{00}(\Gamma)$ be the (dense) subspace of $c_0(\Gamma)$ consisting of all finitely supported sequences. Let also $f\colon c_{00}(\Gamma)\to X$ be a quasi-linear map with $\delta=\Delta(f)$. We shall show that there exists a linear map $h\colon c_{00}(\Gamma)\to X$ satisfying
\begin{equation}\label{T22}
\n{f(t)-h(t)}\leq 4\delta(2+v)\n{t}\quad\mbox{for }t\in c_{00}(\Gamma).
\end{equation}

Let $\F$ be the algebra of all subsets of $\Gamma$ which are either finite, or have finite complements. Define $\nu\colon\F\to X$ by $$\nu(A)=\left\{\begin{array}{rl}f(\ind_A) & \mbox{if }A\mbox{ is finite},\\ -f(\ind_{\Gamma\setminus A}) & \mbox{if }\Gamma\setminus A\mbox{ is finite}.\end{array}\right.$$An easy verification shows that $$\n{\nu(A\cup B)-\nu(A)-\nu(B)}\leq 2\delta\quad\mbox{for }A,B\in\F ,\,\, A\cap B=\varnothing .$$Hence, there is a vector measure $\mu\colon\F\to X$ such that $$\n{f(\ind_A)-\mu(A)}\leq 2\delta v\quad\mbox{for any finite }A\in 2^\Gamma .$$Define $h\colon c_{00}(\Gamma)\to X$ as the linear map induced in a natural way from the measure $\mu$. Then, by the same argument as before, inequality \eqref{T22} is valid and again appealing to Theorem \ref{st_Kalton} completes the proof.
\end{proof}

Before deriving some corollaries from the just-proved theorems let us recall the following result, \cite[Corollary 7.3]{aviles}, which we will find useful also in Section 8.
\begin{theorem}[Avil\'es, Cabello S\'anchez, Castillo, Gonz\'alez \& Moreno \cite{aviles}]\label{pulgarin_thm}
Let $X$ be a separable Banach space and $Y$ be a~Banach space. Suppose that $X$ has the bounded approximation property or $Y$ has the uniform approximation property. Then $\mathrm{Ext}(X,Y)=0$ implies $\mathrm{Ext}(Y^\ast,X^\ast)=0$. Consequently, 
\begin{equation}\label{l1c0}
\Ext(X^\ast,\ell_1)\not=0\quad\mbox{implies}\quad\Ext(c_0,X)\not=0.
\end{equation}
\end{theorem}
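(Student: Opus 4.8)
The plan is to obtain \eqref{l1c0} as a direct specialisation of the duality statement, which I take as given here (it is the content of \cite[Corollary 7.3]{aviles}); only the short implication \eqref{l1c0} then requires an argument. I would first pass to the contrapositive, so that the goal becomes: $\Ext(c_0,X)=0$ forces $\Ext(X^\ast,\ell_1)=0$. This reformulation is convenient because the duality theorem is phrased as an implication $\Ext(\,\cdot\,,\,\cdot\,)=0\Rightarrow\Ext(\,\cdot^\ast,\,\cdot^\ast)=0$, and matching it against the contrapositive is more transparent than against \eqref{l1c0} itself.

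Next I would apply the duality part of the theorem with $c_0$ in the role of the separable space and $X$ in the role of the second space. The hypothesis is met because $c_0$ is separable and carries a monotone Schauder basis, hence has the bounded approximation property; crucially it is $c_0$, not $X$, that supplies the approximation-property hypothesis, so that no such assumption on $X$ is needed beyond its being a Banach space. The theorem then yields $\Ext(c_0,X)=0\Rightarrow\Ext(X^\ast,c_0^\ast)=0$. It remains only to identify the dual: under the isometric identification $c_0^\ast=\ell_1$ the right-hand side reads $\Ext(X^\ast,\ell_1)=0$, which is precisely the contrapositive of \eqref{l1c0}.

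I do not anticipate a genuine obstacle, since all of the analytic weight sits inside the cited duality theorem and the deduction is purely formal. The only points calling for care are bookkeeping ones: inserting the two free spaces into the correct slots so that the dual appearing on the right is $c_0^\ast=\ell_1$ rather than $X^\ast$, and confirming that the bounded approximation property is provided by $c_0$ so that the conclusion is valid without any approximation hypothesis on $X$. Everything else is the standard identification $c_0^\ast=\ell_1$ together with the passage to the contrapositive.
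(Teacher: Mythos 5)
Your derivation is correct and is exactly the intended one: the paper cites the duality statement from \cite{aviles} without proof, and \eqref{l1c0} is meant to follow precisely by applying it with $c_0$ (separable, with a monotone basis, hence the bounded approximation property) in the first slot and $X$ in the second, then identifying $c_0^\ast=\ell_1$ and taking the contrapositive. Your bookkeeping of the slots --- in particular that the approximation hypothesis is supplied by $c_0$, so no assumption on $X$ is needed --- matches the paper's usage.
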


\begin{corollary}\label{Cor1}
If $X$ is a Banach space containing $\{\ell_p^n\}_{n=1}^\infty$ uniformly complemented, for some $1\leq p<\infty$, then $\tau(X)=\omega$. Consequently, for every $\mathscr{L}_p$-space $X$, with $1\leq p<\infty$, we have $\tau(X)=\omega$.
\end{corollary}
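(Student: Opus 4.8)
The plan is to deduce the general statement from the single model space $\ell_p$, for which the $\SVM$ character can be computed directly, and then to transport the conclusion through the locality machinery of Proposition \ref{locally}. The heart of the matter is therefore to establish that $\tau(\ell_p)=\omega$ for every $1\leq p<\infty$.

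Since every Banach space satisfies $\tau\geq\omega$, proving $\tau(\ell_p)=\omega$ amounts to showing that $\ell_p$ fails the $\omega$-$\SVM$ property, i.e.\ that no single constant controls all finite algebras simultaneously. Here I would invoke Theorem \ref{T1}: the space $\ell_p$ is complemented in its bidual (it is reflexive when $1<p<\infty$, while $\ell_1$ is an $L_1$-space and hence $1$-complemented in $\ell_1^{\ast\ast}$), so that if $\ell_p$ had the $\omega$-$\SVM$ property then the pair $(Y,\ell_p)$ would split for every $\mathscr{L}_\infty$-space $Y$. It thus suffices to exhibit a single $\mathscr{L}_\infty$-space $Y$ for which $(Y,\ell_p)$ does not split. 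I would take $Y=c_0$ and argue that $\Ext(c_0,\ell_p)\neq 0$, which already forbids the splitting of the pair, since for Banach spaces the splitting of a pair is strictly stronger than the vanishing of $\Ext$.

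The non-vanishing of $\Ext(c_0,\ell_p)$ is exactly where the real content lies, and it is the step I expect to be the main obstacle. I would obtain it from implication \eqref{l1c0} of Theorem \ref{pulgarin_thm}: applied with the separable space $\ell_p$ (which carries a basis, hence the bounded approximation property), it reduces the claim to $\Ext(\ell_p^\ast,\ell_1)\neq 0$, that is, $\Ext(\ell_q,\ell_1)\neq 0$, where $q$ is the conjugate exponent (and $\ell_\infty$ when $p=1$). The latter is the known fact that $\ell_1$ admits a non-trivial locally convex twisted sum with quotient $\ell_q$; this is the one genuinely non-formal ingredient of the whole argument, as it requires an actual construction (rather than a soft reduction) of a non-trivial extension.

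With $\tau(\ell_p)=\omega$ secured, the general statement follows formally. The space $\ell_p$ is $(1+\varepsilon)$-locally $\{\ell_p^n\}_{n=1}^\infty$, so Proposition \ref{locally}, in its contrapositive form, applies with $\mathcal{E}=\{\ell_p^n\}_{n=1}^\infty$: since $\ell_p$ is locally $\mathcal{E}$ and $\tau(\ell_p)=\omega$, every Banach space $X$ containing $\mathcal{E}$ uniformly complemented must satisfy $\tau(X)=\omega$, which is the first assertion. For the ``consequently'' clause it then remains only to recall the standard structural fact that, for $1\leq p<\infty$, every $\mathscr{L}_p$-space contains the family $\{\ell_p^n\}_{n=1}^\infty$ uniformly complemented; feeding this into the first part yields $\tau(X)=\omega$ for all such spaces.
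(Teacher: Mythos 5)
Your overall architecture is sound and, in substance, it is the paper's: reduce everything to $\tau(\ell_p)=\omega$ (the paper does this via Proposition \ref{comp} and the complemented copy of $\ell_p$ inside any $\mathscr{L}_p$-space, you via Proposition \ref{locally}; both work), and derive the failure of the $\omega$-$\SVM$ property from $\Ext(c_0,\ell_p)\neq 0$. Your use of Theorem \ref{T1} with $Y=c_0$, instead of the paper's combination of Theorem \ref{T2}(ii) with the dichotomy of Proposition \ref{bidual}, is a legitimate cosmetic variant: both routes use that $\ell_p$ is complemented in its bidual, and both stand or fall with $\Ext(c_0,\ell_p)\neq 0$. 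The genuine gap is the step you flag and then black-box: the assertion that $\Ext(\ell_q,\ell_1)\neq 0$ is ``known'' for every conjugate exponent $q$. What the literature actually supplies --- and what the paper cites --- is only $\Ext(\ell_2,\ell_1)\neq 0$ and $\Ext(c_0,\ell_1)\neq 0$ \cite[Examples 4.1 and 4.3]{sanchez_castillo (uniform)}; indeed, Section 8 of the paper records that it is not even known whether a non-$\mathscr{L}_1$-space $Y$ can satisfy $\Ext(Y,\ell_1)=0$, so for a general $q$ this non-vanishing is not a quotable fact but something that must be argued. The paper's proof supplies precisely this missing argument: since $\ell_q$ ($1<q<\infty$) contains $\{\ell_2^n\}_{n=1}^\infty$ uniformly complemented, the non-trivial twisted sum behind $\Ext(\ell_2,\ell_1)\neq 0$ localizes (via the uniform estimate in Theorem \ref{stz_Kalton}(iii)) to give $\Ext(\ell_q,\ell_1)\neq 0$ for all $1<q<\infty$, and only then does \eqref{l1c0} yield $\Ext(c_0,\ell_p)\neq 0$.

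The case $p=1$ exposes a second, sharper problem with your routing. By insisting on \eqref{l1c0} uniformly in $p$, you need $\Ext(\ell_1^\ast,\ell_1)=\Ext(\ell_\infty,\ell_1)\neq 0$, and here the localization trick just described fails outright: $\ell_\infty$ does \emph{not} contain $\{\ell_2^n\}_{n=1}^\infty$ uniformly complemented, because a $c$-complemented subspace of the $1$-injective space $\ell_\infty$ is $c$-injective, while the absolute projection constant of $\ell_2^n$ grows like $\sqrt{n}$. So for $p=1$ your argument would require yet another tool (for instance the surjectivity of the restriction map $\Ext(\ell_\infty,\ell_1)\to\Ext(c_0,\ell_1)$ coming from the long homology sequence of $\ex{c_0}{\ell_\infty}{\ell_\infty/c_0}$), none of which you invoke. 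The simple repair is to treat $p=1$ separately and quote $\Ext(c_0,\ell_1)\neq 0$ directly --- exactly as the paper does --- rather than deriving this citable fact from the strictly harder statement about $\ell_\infty$.
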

\begin{proof}
In light of Propositions \ref{comp} and \ref{locally}, and the fact that every $\mathscr{L}_p$-space contains a~complemented copy of $\ell_p$ (cf. \cite[Proposition II.5.5]{lindenstrauss_tzafriri}) it is enough to show that $\tau(\ell_p)=\omega$ whenever $1\leq p<\infty$.

By a result of Cabello S\'anchez and Castillo (\cite[Example 4.3]{sanchez_castillo (uniform)}), we have $\Ext(c_0,\ell_1)\not=0$, whence Theorem~\ref{T2} implies $\tau(\ell_1)\leq\omega_1$. In the same paper (see \cite[Example~4.1]{sanchez_castillo (uniform)}) it was shown that $\Ext(\ell_2,\ell_1)\not=0$. Since for every $1<p<\infty$ the space $\ell_p$ contains $\{\ell_2^n\}_{n=1}^\infty$ uniformly complemented, the last statement easily implies that $\Ext(\ell_p,\ell_1)\not=0$, thus \eqref{l1c0} gives $\Ext(c_0,\ell_p)\not=0$ and, by Theorem~\ref{T2}, we have $\tau(\ell_p)\leq\omega_1$ for $1<p<\infty$. Consequently, the dichotomy in Proposition \ref{bidual} implies that $\tau(\ell_p)=\omega$ for $1\leq p<\infty$.
\end{proof}

With this tool in hand, we are ready to show that the $\SVM$ character equals $\omega$ for many classical Banach spaces. For completeness, let us recall briefly some definitions.

The $p$th {\it James space} $\mathcal{J}_p$ (for $1\leq p<\infty$) is defined to be the Banach space of all (real) sequences $\xi=(\xi(n))_{n=1}^\infty$ such that $$\n{\xi}_{\mathcal{J}_p}:=\sup\Biggl\{\Bigl(\sum_{j=1}^n\abs{\xi(k_j)-\xi(k_{j-1})}^p\Bigr)^{1/p}\colon 1\leq k_0<k_1<\ldots <k_n\Biggr\}<\infty,$$equipped with the so-defined norm $\n{\cdot}_{\mathcal{J}_p}$. The space $\mathcal{J}_p$ contains a~complemented copy of $\ell_p$; such copies may be even found in every infinite-dimensional subspace of $\mathcal{J}_p$, by a~result of Casazza, Lin and Lohman \cite[Corollary 11]{casazza_lin_lohman}. For further information, see \cite[Chapter 2]{forest}, \cite[\S 3.4]{albiac_kalton}, and the references therein.

The $p$th {\it Johnson--Lindenstrauss space} $\JL_p$ (for $1\leq p<\infty$) is defined as follows: Let $\{N_\gamma\}_{\gamma\in\Gamma}$ be an~almost disjoint family ({\it i.e.} $\abs{N_{\gamma_1}\cap N_{\gamma_2}}<\omega$ for all $\gamma_1\not=\gamma_2$ from $\Gamma$) with cardinality of the continuum and consisting of infinite subsets of $\omega$. Now, consider the space $V=\span(c_0\cup\{\ind_{N_\gamma}\colon\gamma\in\Gamma\})$ generated by all the sequences tending to zero and the characteristic functions of all $N_\gamma$'s. Every element $x$ of $V$ is of the form 
\begin{equation}\label{xJL}
x=y+\sum_{j=1}^ka_{\gamma_j}\ind_{N_{\gamma_j}},\quad\mbox{where }\, y\in c_0,\,\,\, a_{\gamma_j}\in\R,\,\,\,\gamma_j\in\Gamma\,\mbox{ and }\,\gamma_i\not=\gamma_j\,\mbox{ for }\, i\not=j.\end{equation}
Moreover, the scalars $a_{\gamma_j}$ ($1\leq j\leq k$) are uniquely determined by $x$, so the formula 
$$
\n{x}_{\JL_p}:=\max\Biggl\{\n{x}_\infty,\Bigl(\sum_{j=1}^k\abs{a_{\gamma_j}}^p\Bigr)^{1/p}\Biggr\}
$$
is well-posed and defines a~norm on $V$. The space $\JL_p$ is then the completion of $V$ with respect to this norm (for more details, see \cite{johnson_lindenstrauss}). If $p=\infty$ the definition is algebraically the same, but we take the $\ell_\infty$-norm. The space $\JL_\infty$ will be dealt with in Section 6; now we focus on the case $1\leq p<\infty$.

Observe that $\JL_p$ contains $\{\ell_p^n\}_{n=1}^\infty$ uniformly complemented. To see this, let $n\in\N$ and pick $n$ distinct $\gamma_1,\ldots ,\gamma_n\in\Gamma$. Define $F_n=\bigcup_{1\leq i<j\leq n}(N_{\gamma_i}\cap N_{\gamma_j})$ which is a~finite set. Then the sets $N_{\gamma_j}\setminus F_n$, for $1\leq j\leq n$, are pairwise disjoint. Consider the subspace $W_n$ of $\JL_p$ consisting of all the sequences from $\JL_p$ supported on the set $S_n:=\bigcup_{j=1}^nN_{\gamma_j}\setminus F_n$ and constant on each of the sets $N_{\gamma_j}\setminus F_n$. Plainly, $W_n$ is then isometrically isomorphic to $\ell_p^n$, and it is easily seen that it is also $1$-complemented in $\JL_p$. For let $\pi_n\colon V\to W_n$ be a~map which for each $x\in V$, being of the form \eqref{xJL} (with $k\geq n$ and some of $a_j$'s possibly equal to zero), cancels the term $y\in c_0$, as well as all the scalars $a_{\gamma_j}$ with $j>n$, and projects the resulting sequence onto $S_n$, that is,
$$
\pi_n\Bigl(y+\sum_{j=1}^ka_{\gamma_j}\ind_{N_{\gamma_j}}\Bigr)=\sum_{j=1}^na_{\gamma_j}\ind_{N_{\gamma_j}\cap S_n}.
$$
Then $\pi_n$ is a~norm-one projection onto $W_n$, so it admits an~extension to a~norm-one projection $\JL_p\to W_n$.

The $p$th {\it Schreier space} $\mathcal{S}_p$ (for $1\leq p<\infty$) is defined to be the Banach space of all (real) sequences $\xi=(\xi(n))_{n=1}^\infty$ such that
$$
\n{\xi}_{\mathcal{S}_p}:=\sup\Biggl\{\Bigl(\sum_{j=1}^n\abs{\xi(k_j)}^p\Bigr)^{1/p}\colon n,k_1,\ldots ,k_n\in\N\mbox{ and }n\leq k_1<\ldots <k_n\Biggr\}<\infty,
$$
equipped with the so-defined norm $\n{\cdot}_{\mathcal{S}_p}$. Plainly, for any $n\in\N$ the subspace of $\mathcal{S}_p$ consisting of all sequences supported on the interval $\{n,n+1,\ldots ,2n-1\}$ is isometrically isomorphic to $\ell_p^n$ and $1$-complemented, which shows that $\mathcal{S}_p$ contains $\{\ell_p^n\}_{n=1}^\infty$ uniformly complemented.

The {\it James tree space} $\mathcal{JT}$ is defined to be the Banach space of all (real) sequences $\xi=(\xi(t))_{t\in\mathcal{T}}$, indexed by the elements of the dyadic tree $\mathcal{T}=\{(n,j)\colon n\in\N, 0\leq j<2^n\}$ (ordered by letting $(m,i)<(n,j)$ iff $m<n$), such that
$$
\n{\xi}_{\mathcal{JT}}:=\sup\Biggl\{
\Bigl(\sum_{j=1}^n\Bigl|\sum_{i\in I_j}\xi(i)\Bigr|^2\Bigr)^{1/2}\colon 
\begin{array}{c}
n\in\N\mbox{ and }I_1,\ldots ,I_n\mbox{ are pairwise}\\
\mbox{disjoint segments of }\mathcal{T}\end{array}
\Biggr\}<\infty
$$
(by a {\it segment} we mean a~subset of $\mathcal{T}$ which, for some $s,t\in\mathcal{T}$, is the maximal totally ordered set with $s$ as its minimal element and $t$ as its maximal element), equipped with the so-defined norm $\n{\cdot}_{\mathcal{JT}}$. For any fixed {\it branch} $\mathcal{B}$ (maximal totally ordered subset of $\mathcal{T}$), the space of all sequences from $\mathcal{JT}$ supported on $\mathcal{B}$ forms a~$1$-complemented subspace isomorphic to the James space $\mathcal{J}_2$ (cf. \cite[Proposition 3.a.7(b)]{forest}). Hence, $\mathcal{JT}$ contains an~isomorphic copy of $\ell_2$ complemented. According to a~result of Jebreen, Jamjoom and Yost \cite[Lemma 2.4]{jebreen_jamjoom_yost}, the standard predual $\mathcal{JT}_\ast$ of $\mathcal{JT}$ contains $\{\ell_1^n\}_{n=1}^\infty$ uniformly complemented, hence so does the dual $\mathcal{JT}^\ast$. For further information, see \cite[Chapter 3]{forest}, \cite[\S 13.4]{albiac_kalton}, and the references therein.

Many examples of Banach spaces having the $\SVM$ character $\omega$ are provided by the class of asymptotic $\ell_p$ spaces (recall that a~Banach space $X$ with a~normalised basis $(e_n)_{n=1}^\infty$ is {\it asymptotic} $\ell_p$ if there is a~constant $c>0$ so that for every $k\in\N$ there is $N=N(k)$ such that every sequence $(x_j)_{j=1}^k$ of $k$ successive, normalised block vectors of $(e_n)_{n=1}^\infty$ with $N<\supp(x_1)<\ldots<\supp(x_k)$ is $c$-equivalent to the canonical basis of $\ell_p^n$), because all such spaces contain $\{\ell_p^n\}_{n=1}^\infty$ uniformly complemented. Let us just mention the famous {\it Tsirelson space} $\mathcal{T}_\theta$, with parameter $\theta\in (0,1)$, being the completion of $c_{00}$ with respect to the norm $\n{\cdot}_{\mathcal{T}_\theta}$ defined by the implicit equation
$$
\n{\xi}_{\mathcal{T}_\theta}=\max\Biggl\{\n{\xi}_\infty,\sup\theta\sum_{j=1}^k\n{\xi\ind_{E_j}}_{\mathcal{T}_\theta}\Biggr\},
$$
the `sup' taken over all {\it admissible} families $\{E_1,\ldots ,E_k\}$ of finite subsets of $\N$, that is, satisfying $k\leq E_1<\ldots<E_k$. The space $\mathcal{T}_{1/2}$ (defined in this form by Figiel and Johnson \cite{figiel_johnson}) is the dual of a~space originally introduced by Tsirelson. Each of the spaces $\mathcal{T}_\theta$, for $\theta\in (0,1)$, is asymptotic $\ell_1$ (cf. \cite[Chapter 3]{dew}). The spaces $\mathcal{T}_\eta$ and $\mathcal{T}_\theta$, for $\eta\not=\theta$, are {\it totally incomparable} (neither contains an infinite-dimensional subspace isomorphic to a~subspace of the other); see \cite[Theorem X.a.3]{casazza_shura}. For the comprehensive study of many other Tsirelson-type spaces, consult \cite{argyros_todorcevic} and \cite{dew}.

To close the list of examples note that the space $\mathscr{B}(\mathcal{H})$ of all operators acting from an infinite-dimensional Hilbert space $\mathcal{H}$ into itself contains a~complemented subspace isomorphic to $\ell_2$. One of such is the subspace of all `row operators' $T\in\mathscr{B}(\mathcal{H})$ of the form $Th=\sum_{j=0}^\infty t_j\langle h,e_j\rangle$, where $(t_j)_{j=0}^\infty$ may be an~arbitrary sequence from $\ell_2$ and $(e_j)_{j<\dim\mathcal{H}}$ is any fixed orthonormal basis of $\mathcal{H}$.

\begin{corollary}\label{Cor2}
Let $X$ be one of the Banach spaces: $\mathcal{J}_p$, $\JL_p$, $\mathcal{S}_p$ {\rm (}for $1\leq p<\infty${\rm )}, $\mathcal{JT}$, $\mathcal{JT}_\ast$, $\mathcal{JT}^\ast$, $\mathcal{T}_\theta$ {\rm (}for $0<\theta<1${\rm )}, or $\mathscr{B}(\mathcal{H})$ {\rm (}for an infinite-dimensional Hilbert space $\mathcal{H}${\rm )}. Then $\tau(X)=\omega$.
\end{corollary}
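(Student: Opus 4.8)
The plan is to reduce the entire statement to Corollary \ref{Cor1}, which asserts $\tau(X)=\omega$ for every Banach space containing $\{\ell_p^n\}_{n=1}^\infty$ uniformly complemented for some $1\leq p<\infty$. Thus it suffices to exhibit, for each space on the list, such a value of $p$ together with the uniformly complemented finite-dimensional blocks; almost all of this has already been verified in the passages preceding the statement, so the proof amounts to collecting those observations. The one general remark I would record first is that a complemented copy of the \emph{whole} space $\ell_p$ already yields $\{\ell_p^n\}_{n=1}^\infty$ uniformly complemented: if $P\colon X\to\ell_p$ is a projection with $\n{P}\leq\lambda$, then composing it with the canonical norm-one projection $\ell_p\to\ell_p^n$ onto the first $n$ coordinates gives, for every $n$, a $\lambda$-complemented subspace isometric to $\ell_p^n$, so $X$ contains $\{\ell_p^n\}_{n=1}^\infty$ $(\lambda,d)$-uniformly complemented for any $d>1$.

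This remark disposes at once of $\mathcal{J}_p$ (which contains a complemented $\ell_p$), of $\mathcal{JT}$ (whose branch subspaces are complemented copies of $\mathcal{J}_2$, hence carry a complemented $\ell_2$), and of $\mathscr{B}(\mathcal{H})$ (whose row operators form a complemented $\ell_2$), with $p$ the relevant index in $[1,\infty)$, namely $2$ for the latter two. For $\JL_p$ and $\mathcal{S}_p$ the subspaces $W_n$, respectively the blocks supported on $\{n,\ldots,2n-1\}$, built above are isometric to $\ell_p^n$ and $1$-complemented, so these spaces contain $\{\ell_p^n\}_{n=1}^\infty$ uniformly complemented outright; the same holds verbatim for $\mathcal{JT}_\ast$ by the Jebreen--Jamjoom--Yost lemma and for $\mathcal{T}_\theta$ by its asymptotic $\ell_1$ structure, both with $p=1$. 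Finally, for $\mathcal{JT}^\ast$ I would start from the uniformly complemented copies of $\ell_1^n$ in $\mathcal{JT}_\ast$ furnished by projections $P_n$ and pass to the biadjoints $P_n^{\ast\ast}$, which are idempotents on $(\mathcal{JT}_\ast)^{\ast\ast}=\mathcal{JT}^\ast$ of the same norm as $P_n$ and with ranges $(\ell_1^n)^{\ast\ast}=\ell_1^n$, so $\mathcal{JT}^\ast$ too contains $\{\ell_1^n\}_{n=1}^\infty$ uniformly complemented.

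In each case Corollary \ref{Cor1} then gives $\tau(X)=\omega$, completing the proof. There is essentially no hard step here: all the genuine work --- exhibiting the finite-dimensional complemented blocks and invoking the asymptotic $\ell_1$ structure of the Tsirelson spaces --- was carried out before the statement, so what remains is bookkeeping. The single point deserving a moment's care is the duality argument for $\mathcal{JT}^\ast$, where one must check that taking biadjoints preserves both the norm of a projection and the isomorphism type (and hence the Banach--Mazur distance) of its finite-dimensional range; this is immediate since $P_n^{\ast\ast}$ is again idempotent with $\n{P_n^{\ast\ast}}=\n{P_n}$ and a finite-dimensional space is canonically its own bidual.
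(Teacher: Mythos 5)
Your proposal is correct and follows essentially the same route as the paper: the paper's (implicit) proof of this corollary is precisely to combine Corollary \ref{Cor1} with the uniformly complemented copies of $\ell_p^n$ exhibited in the discussion preceding the statement, including passing from a complemented copy of $\ell_p$ to uniformly complemented $\ell_p^n$'s for $\mathcal{J}_p$, $\mathcal{JT}$ and $\mathscr{B}(\mathcal{H})$, and obtaining $\mathcal{JT}^\ast=(\mathcal{JT}_\ast)^{\ast\ast}$ from $\mathcal{JT}_\ast$ exactly as you do via biadjoint projections. Your bookkeeping (including the observation that finite-dimensional ranges are canonically their own biduals) fills in the details the paper leaves to the reader, with no substantive deviation.
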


\begin{corollary}\label{Cor3}
For every compact metric space $\Omega$, except the case where $\mathcal{C}(\Omega)\simeq c_0$ or $\mathcal{C}(\Omega)$ is finite-dimensional, we have $\tau(\mathcal{C}(\Omega))=\omega_1$.
\end{corollary}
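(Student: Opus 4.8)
The plan is to establish the two inequalities $\tau(\mathcal{C}(\Omega))\geq\omega_1$ and $\tau(\mathcal{C}(\Omega))\leq\omega_1$ after first reducing $\Omega$ to a model space. By Miljutin's theorem and the Bessaga--Pe\l czy\'nski classification (recalled in the proof of Proposition \ref{omega_c0}), every infinite-dimensional $\mathcal{C}(\Omega)$ with $\Omega$ compact metric is isomorphic either to $\mathcal{C}[0,1]$ (the uncountable case) or to some $\mathcal{C}[0,\omega^{\omega^\alpha}]$ with $\alpha$ a countable ordinal; the excluded case $\mathcal{C}(\Omega)\simeq c_0$ is precisely $\alpha=0$. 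So I may assume $\mathcal{C}(\Omega)$ is $\mathcal{C}[0,1]$ or $\mathcal{C}[0,\omega^{\omega^\alpha}]$ with $\alpha\geq 1$.

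For the lower bound, every $\mathcal{C}(\Omega)$ is an $\mathscr{L}_\infty$-space, so Corollary \ref{linfty} gives $\tau(\mathcal{C}(\Omega))>\omega$, i.e.\ $\tau(\mathcal{C}(\Omega))\geq\omega_1$. For the upper bound I must show that $\mathcal{C}(\Omega)$ fails the $\omega_1$-$\SVM$ property. Here I would apply Theorem \ref{T2}(ii) with $\Gamma=\omega$: since $\omega^+=\omega_1$ and $c_0(\omega)=c_0$, the $\omega_1$-$\SVM$ property of $\mathcal{C}(\Omega)$ would force $(c_0,\mathcal{C}(\Omega))$ to split, and a fortiori
\[
\Ext(c_0,\mathcal{C}(\Omega))=0 .
\]
Hence it suffices to prove $\Ext(c_0,\mathcal{C}(\Omega))\neq 0$ in each non-excluded case.

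To avoid treating the model spaces separately, I would reduce everything to the single statement $\Ext(c_0,\mathcal{C}[0,\omega^\omega])\neq 0$. The key observation is that $\Ext(c_0,\cdot)$ lifts through complementation: if $Y$ is complemented in $Z$ and $\Ext(c_0,Y)\neq 0$, then taking the pushout of a non-trivial extension $\ex{Y}{E}{c_0}$ along the inclusion $Y\hookrightarrow Z$ produces an extension $\ex{Z}{E\oplus Y'}{c_0}$, and restricting any would-be projection back to $E$ would complement $Y$ in $E$ -- a contradiction, so $\Ext(c_0,Z)\neq 0$. Now $[0,\omega^\omega]$ is (homeomorphic to) a closed subset of $[0,1]$ and an initial segment of $[0,\omega^{\omega^\alpha}]$ for every $\alpha\geq 1$. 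In the ordinal case restriction to the initial segment, followed by extension as a constant past $\omega^\omega$, is a norm-one projection onto a copy of $\mathcal{C}[0,\omega^\omega]$; in the metric case the Borsuk--Dugundji extension theorem supplies a bounded linear extension operator $\mathcal{C}[0,\omega^\omega]\to\mathcal{C}[0,1]$, again making $\mathcal{C}[0,\omega^\omega]$ complemented. By the lifting remark it is therefore enough to prove the single statement $\Ext(c_0,\mathcal{C}[0,\omega^\omega])\neq 0$.

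This last point is the main obstacle. The dual route through Theorem \ref{pulgarin_thm} is useless here: the domain $[0,\omega^\omega]$ is countable, so every measure on it is atomic and $\mathcal{C}[0,\omega^\omega]^\ast\simeq\ell_1$; since $\ell_1$ is projective one has $\Ext(\ell_1,\ell_1)=0$, which carries no information. Instead I would exhibit a non-trivial element of $\Ext(c_0,\mathcal{C}[0,\omega^\omega])$ by hand, producing through Theorem \ref{stz_Kalton} a zero-linear map $f\colon c_{00}\to\mathcal{C}[0,\omega^\omega]$ that, after normalising by $Z(f)$, stays at infinite distance from every linear map -- equivalently, by invoking the known fact that a non-trivial twisted sum $\ex{\mathcal{C}(K)}{Z}{c_0}$ exists for every infinite compact metric $K$ with $\mathcal{C}(K)\not\simeq c_0$. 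Building such an $f$ explicitly, using the Cantor--Bendixson hierarchy of $[0,\omega^\omega]$ to encode an obstruction that no linear correction can absorb, is the technical heart of the argument. Granting it, $\Ext(c_0,\mathcal{C}(\Omega))\neq 0$ in all non-excluded cases, whence $\tau(\mathcal{C}(\Omega))\leq\omega_1$ and finally $\tau(\mathcal{C}(\Omega))=\omega_1$. The two excluded cases are genuinely different: a finite-dimensional space enjoys the full $\SVM$ property, while $\tau(c_0)=\omega_2$ by Corollary \ref{c0}, in keeping with $\Ext(c_0,c_0)=0$ (Sobczyk), which is exactly why neither produces the value $\omega_1$.
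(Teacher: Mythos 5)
Your proposal is correct, and its skeleton is the paper's: the reduction via Miljutin's theorem and the Bessaga--Pe\l czy\'nski classification to the model spaces, the lower bound $\tau(\mathcal{C}(\Omega))>\omega$ coming from the Kalton--Roberts theorem (you route it through Corollary \ref{linfty}, the paper through Proposition \ref{omega_c0}(ii)), and the upper bound obtained by feeding $\Ext(c_0,\mathcal{C}(\Omega))\not=0$ into Theorem \ref{T2}(ii) with $\Gamma=\omega$. The genuine differences lie in how the non-vanishing of $\Ext$ is distributed over the model spaces. The paper quotes two external results --- $\Ext(c_0,\mathcal{C}[0,\omega^\omega])\not=0$ from \cite[Theorems 4.1 \& 3.5]{sanchez_castillo_kalton_yost} and, for the uncountable case, the Foia\c{s}--Singer twisted sum giving $\Ext(c_0,\mathcal{C}[0,1])\not=0$ \cite{foias_singer} --- and then propagates the failure of the $\omega_1$-$\SVM$ property itself to each $\mathcal{C}[0,\omega^{\omega^\alpha}]$, $\alpha\geq 1$, via Proposition \ref{comp}. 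You instead propagate the non-vanishing of $\Ext(c_0,\cdot)$ to superspaces containing the given space complemented (your pushout/direct-sum argument is just the additivity of $\Ext$, the same fact the paper invokes in Proposition \ref{ext_JL} via \cite[Lemma 4]{sanchez_castillo (duality)}), and --- the real economy --- you dispose of $\mathcal{C}[0,1]$ by Dugundji's extension theorem, which realises $\mathcal{C}[0,\omega^\omega]$ as a complemented subspace of $\mathcal{C}[0,1]$; this removes the need for the Foia\c{s}--Singer citation and leaves a single fact to verify. The caveat is that this single fact, $\Ext(c_0,\mathcal{C}[0,\omega^\omega])\not=0$, is not proved in your proposal: the sketched hand-construction of a zero-linear map organised along the Cantor--Bendixson hierarchy is only a gesture, and the ``known fact'' you appeal to, stated for all infinite compact metric $K$ with $\mathcal{C}(K)\not\simeq c_0$, is exactly the union of the two citations above (quoting it in that generality would also make your reduction pointless). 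So cite \cite[Theorems 4.1 \& 3.5]{sanchez_castillo_kalton_yost} for the one instance you need; with that replacement your proof is complete to precisely the same degree as the paper's, while using one external ingredient instead of two.
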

\begin{proof}
In light of Proposition \ref{omega_c0}(ii), we shall prove that none of the $\mathcal{C}(\Omega)$-spaces as above has the $\omega_1$-$\SVM$ property. In view of the theorems of Miljutin and Bessaga--Pe\l czy\'nski, it is enough to consider the spaces $\mathcal{C}[0,\omega^{\omega^\alpha}]$, where $\alpha\geq 1$ is a~countable ordinal, and the space $\mathcal{C}[0,1]$.  

By a theorem of Cabello S\'anchez, Castillo, Kalton and Yost (\cite[Theorems 4.1 \&\ 3.5]{sanchez_castillo_kalton_yost}), we have $\Ext(c_0,\mathcal{C}[0,\omega^\omega])\not=0$. Hence, Theorem \ref{T2}(ii) implies that $\mathcal{C}[0,\omega^\omega]$ does not have the $\omega_1$-$\SVM$ property, so neither does any $\mathcal{C}[0,\omega^{\omega^\alpha}]$ (for $\alpha\geq 1$ being a~countable ordinal), since every such space contains $\mathcal{C}[0,\omega^\omega]$ complemented (notice that $[0,\omega^\omega]$ is a~clopen subset of $[0,\omega^{\omega^\alpha}]$ for $\alpha\geq 1$).

By a result of Foia\c{s} and Singer \cite{foias_singer}, we have $\Ext(c_0,\mathcal{C}[0,1])\not=0$ (see also the remarks before Proposition \ref{ext_JL} below). Thus, another appeal to Theorem \ref{T2}(ii) completes the proof.
\end{proof}
\section{$\kappa$-injectivity implies the $\kappa$-$\SVM$ property}
\noindent
The aim of this section is to derive an analogue of Theorem \ref{injective} in which injectivity is replaced by its weaker version, $\kappa$-injectivity.
\begin{definition}\label{kappa}
Let $\kappa$ be a cardinal number. A~Banach space $X$ is called $\kappa$-{\it injective} if for every Banach space $E$, with density character less than $\kappa$, and every subspace $F\subset E$, every operator $t\colon F\to X$ admits an extension to an operator $T\colon E\to X$. If for some $\lambda\geq 1$ there is always such an~extension with $\n{T}\leq\lambda\n{t}$, then we say that $X$ is $(\lambda,\kappa)$-{\it injective}.

If $\kappa=\omega_1$ we say that $X$ is {\it separably injective}.
\end{definition}
By an `$\ell_1$-sum argument', if $\kappa$ has uncountable cofinality, then every $\kappa$-injective Banach space is $(\lambda,\kappa)$-injective for some $\lambda\geq 1$ (cf. \cite{sanchez}). The above definition, as well as the proposition below, comes from \cite{aviles}, where the reader may find a~detailed exposition on the subject of $\kappa$-injectivity.
\begin{proposition}[Avil\'es, Cabello S\'anchez, Castillo, Gonz\'alez \& Moreno \cite{aviles}]\label{dens}
Let $\kappa$ be a cardinal number and $X$ be a~Banach space. Then, the following assertions are equivalent:
\begin{itemize*}
\item[{\rm (a)}] $X$ is $\kappa$-injective;
\item[{\rm (b)}] for every set $\Gamma$ with $\abs{\Gamma}<\kappa$, every operator from a~subspace of $\ell_1(\Gamma)$ into $X$ admits an extension to $\ell_1(\Gamma)$;
\item[{\rm (c)}] for every Banach space $E$, and every its subspace $F$ such that the density character of $E/F$ is less than $\kappa$, every operator $t\colon F\to X$ has an extension to an~operator $T\colon E\to X$;
\item[{\rm (d)}] if $Z$ is a Banach space containing $X$ and the density character of $Z/X$ is less than $\kappa$, then $X$ is complemented in $Z$;
\item[{\rm (e)}] $\Ext(Z,X)=0$ for every Banach space $Z$ with density character less than $\kappa$.
\end{itemize*}
Moreover, if $X$ is $(\lambda,\kappa)$-injective for some $\lambda\geq 1$, then assertion {\rm (c)} holds true with $\n{T}\leq 3\lambda\n{t}$, whereas assertion {\rm (d)} may be strengthen by saying that $X$ is $3\lambda$-complemented.
\end{proposition}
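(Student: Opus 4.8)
The plan is to show that the five conditions are equivalent by running the single cycle $\mathrm{(a)}\Rightarrow\mathrm{(b)}\Rightarrow\mathrm{(c)}\Rightarrow\mathrm{(d)}\Rightarrow\mathrm{(e)}\Rightarrow\mathrm{(a)}$, of which four links are formal and only one, namely $\mathrm{(b)}\Rightarrow\mathrm{(c)}$, carries the real content. The link $\mathrm{(a)}\Rightarrow\mathrm{(b)}$ is immediate, since $\ell_1(\Gamma)$ has density character $\abs{\Gamma}<\kappa$. For $\mathrm{(c)}\Rightarrow\mathrm{(d)}$ I would apply (c) to the inclusion $X\subset Z$ with $t=\id_X$ (legitimate because the density character of $Z/X$ is $<\kappa$): the resulting extension $T\colon Z\to X$ is a projection onto $X$. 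The link $\mathrm{(d)}\Rightarrow\mathrm{(e)}$ is a contrapositive bookkeeping step: a nonzero element of $\Ext(Z,X)$ with $Z$ of density character $<\kappa$ is represented by a non-split sequence $\ex{X}{W}{Z}$, so that $W\supset X$ with $W/X\simeq Z$ of density character $<\kappa$, and then $X$ is not complemented in $W$ by Proposition \ref{prop_21}, contradicting (d).

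The link $\mathrm{(e)}\Rightarrow\mathrm{(a)}$ is the standard homological translation of extension into splitting. Given $E$ of density character $<\kappa$, a subspace $F$ and an operator $t\colon F\to X$, I would push the sequence $\ex{F}{E}{E/F}$ out along $t$ to obtain $\ex{X}{\mathrm{PO}}{E/F}$; the operator $t$ extends to $E$ precisely when this pushout sequence splits. Since the density character of $E/F$ does not exceed that of $E$, and hence is $<\kappa$, hypothesis (e) gives $\Ext(E/F,X)=0$ and the pushout splits.

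The only substantial step is $\mathrm{(b)}\Rightarrow\mathrm{(c)}$, and this is where I expect the difficulty: the spaces $E$ and $F$ may have arbitrarily large density character while only the quotient $E/F$ is controlled, so neither (b) nor a bidual/compactness argument applies to $E$ directly. I would fix a metric quotient map $q\colon\ell_1(\Gamma)\to E/F$ with $\abs{\Gamma}$ equal to the density character of $E/F$, hence $<\kappa$ (take a dense subset of the unit ball of $E/F$ indexed by $\Gamma$). Because $\ell_1(\Gamma)$ has the lifting property, the quotient map $\pi\colon E\to E/F$ admits a lifting $\sigma\colon\ell_1(\Gamma)\to E$ with $\pi\circ\sigma=q$. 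For $z\in\ker q$ one has $\sigma(z)\in\ker\pi=F$, so $t\circ\sigma$ restricts to an operator $\ker q\to X$ on the subspace $\ker q\subset\ell_1(\Gamma)$; by (b) it extends to an operator $U\colon\ell_1(\Gamma)\to X$. I would then define $T\colon E\to X$ by $T(e)=t(e-\sigma z)+U(z)$, where $z$ is any element of $\ell_1(\Gamma)$ with $q(z)=\pi(e)$ (equivalently, one forms the pullback of $\pi$ and $q$ and pushes the natural map down to $E$). The crux is that $T$ is well defined: replacing $z$ by $z+w$ with $w\in\ker q$ alters the formula by $-t(\sigma w)+U(w)$, which vanishes precisely because $U$ extends $t\circ\sigma$ on $\ker q$. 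Taking $z=0$ for $e=f\in F$ gives $T(f)=t(f)$, so $T$ extends $t$; and boundedness follows since $z$ may be chosen with $\n{z}\leq(1+\e)\n{e}$. Checking well-definedness and boundedness, and arranging the metric choices of $q$, $\sigma$ and $z$, is the main obstacle.

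For the quantitative addendum I would repeat the $\mathrm{(b)}\Rightarrow\mathrm{(c)}$ construction tracking norms, using that $(\lambda,\kappa)$-injectivity provides the norm-controlled form of (b) on $\ell_1(\Gamma)$. Taking $q$ a metric surjection, $\sigma$ with $\n{\sigma}\leq 1+\e$, and $z$ with $\n{z}\leq(1+\e)\n{e}$, and estimating $\n{U}\leq\lambda\n{t}\n{\sigma}\leq\lambda(1+\e)\n{t}$, the triangle inequality gives $\n{T(e)}\leq\n{t}\bigl(1+(1+\e)^2\bigr)\n{e}+\lambda(1+\e)^2\n{t}\n{e}$; letting $\e\to 0$ yields $\n{T}\leq(2+\lambda)\n{t}\leq 3\lambda\n{t}$, since $\lambda\geq 1$. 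Feeding this into $\mathrm{(c)}\Rightarrow\mathrm{(d)}$ with $t=\id_X$ then shows that $X$ is $3\lambda$-complemented, as claimed.
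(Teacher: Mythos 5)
The paper never proves this proposition: it is imported from \cite{aviles} with an explicit pointer to that reference for the proof, so there is no internal argument to compare yours against. Judged on its own terms, your cycle $\mathrm{(a)}\Rightarrow\mathrm{(b)}\Rightarrow\mathrm{(c)}\Rightarrow\mathrm{(d)}\Rightarrow\mathrm{(e)}\Rightarrow\mathrm{(a)}$ is correct, and the one substantive link $\mathrm{(b)}\Rightarrow\mathrm{(c)}$ is exactly the standard lifting argument behind the cited result: take a metric surjection $q\colon\ell_1(\Gamma)\to E/F$ with $\abs{\Gamma}=\mathrm{dens}(E/F)<\kappa$, lift it through $\pi\colon E\to E/F$ by projectivity of $\ell_1(\Gamma)$, note $\sigma(\ker q)\subset F$, extend $t\circ\sigma|_{\ker q}$ by (b), and glue via $T(e)=t(e-\sigma z)+U(z)$. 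Your verification of well-definedness is the right one, and it simultaneously gives linearity, the extension property ($z=0$ on $F$), and boundedness. The formal links are also fine; the only hidden point is in $\mathrm{(d)}\Rightarrow\mathrm{(e)}$, where the copy of $X$ inside $W$ coming from an exact sequence is only an isomorphic (not isometric) copy, so one should either read (d) isomorphically or renorm $W$ via Theorem \ref{Pelczynski} before invoking it; this is routine.

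One quantitative imprecision deserves flagging. You cannot literally ``let $\e\to 0$'' to conclude $\n{T}\leq(2+\lambda)\n{t}$, because the operator $T$ you built depends on $\e$ (through $\sigma$ and the choice of $z$); what the construction actually yields is, for every $\delta>0$, an extension of norm at most $(2+\lambda+\delta)\n{t}$. When $\lambda>1$ this still gives the stated bound: choose $\e$ so small that $1+(1+\lambda)(1+\e)^2\leq 3\lambda$, which is possible since $2+\lambda<3\lambda$. But when $\lambda=1$ the inequality $2+\lambda\leq 3\lambda$ is an equality, and your argument only produces extensions of norm $\leq(3+\delta)\n{t}$ for every $\delta>0$, not $\leq 3\n{t}$ exactly; the same remark applies to the $3\lambda$-complementation in (d). So either state the quantitative conclusion with the infimum/arbitrary $\delta$, or restrict the clean constant $3\lambda$ to $\lambda>1$.
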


Now, our goal is to prove the following theorem:
\begin{theorem}\label{k_injective}
Let $\kappa$ be a cardinal number with uncountable cofinality. Then, every $\kappa$-injective Banach space $X$ has the $\kappa$-$\SVM$ property, in other words, $\tau(X)>\kappa$. Moreover, if $X$ is $(\lambda,\kappa)$-injective {\rm (}and then the cofinality of $\kappa$ may be arbitrary{\rm )} then 
\begin{equation}\label{kappa_v}
v(\kappa,X)\leq 24\lambda K.
\end{equation}
\end{theorem}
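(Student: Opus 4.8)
The plan is to connect the $\kappa$-$\SVM$ property directly to the splitting condition established in Theorem~\ref{stz_Kalton}, using the characterisation of $\kappa$-injectivity from Proposition~\ref{dens}(e), namely $\Ext(Z,X)=0$ for all $Z$ of density character less than $\kappa$. The key reduction is this: given an arbitrary set algebra $\F$ with $\abs{\F}<\kappa$ and a $1$-additive map $\nu\colon\F\to X$, I want to produce a genuine vector measure $\mu$ within bounded distance. The natural link is that a $1$-additive map from $\F$ is essentially the restriction (to characteristic functions) of a zero-linear map defined on a suitable function space built over $\F$. The plan is to let $X_0$ be the space of $\F$-simple functions, sitting densely inside some Banach space $\mathfrak{X}$ (the sup-norm completion), and observe that $\nu$ extends to a zero-linear map $f\colon X_0\to X$ by the same computation used in the proof of Theorem~\ref{T1} — the reverse of the passage from $h$ to $\nu(A)=h(\ind_A)$ there. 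The constant $K$ entering \eqref{kappa_v} is the Kalton--Roberts constant, which enters precisely because estimating the defect of $f$ on simple functions reduces, coordinate-by-coordinate or via the scalar theory, to Theorem~\ref{KR}.

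First I would carefully set up the function space: given $\F\subset 2^\Omega$, the simple functions form a dense subspace $X_0$ of $\mathfrak{X}$, which is an $\mathscr{L}_\infty$-space of density character at most $\abs{\F}<\kappa$. Since $X$ is $(\lambda,\kappa)$-injective, Proposition~\ref{dens}(e) gives $\Ext(\mathfrak{X},X)=0$, so by Theorem~\ref{stz_Kalton} every zero-linear map $f\in\Xi(X_0,X)$ is within bounded distance $B\cdot Z(f)\n{\cdot}$ of a linear map $h\colon X_0\to X$. The bound $B$ is controlled by the $(\lambda,\kappa)$-injectivity constant. Next I would define $f$ from $\nu$: set $f(\ind_A)=\nu(A)$ and extend by the dyadic-approximation argument of Theorem~\ref{T1} to show $f$ is zero-linear with $Z(f)$ controlled by an absolute multiple of $K$ (or simply of the Kalton--Roberts constant via applying Theorem~\ref{KR} to bound the scalar defects). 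The linear map $h$ then restricts to an additive set function $A\mapsto h(\ind_A)=:\mu(A)$, and the distance estimate $\n{f(\ind_A)-h(\ind_A)}\leq B\cdot Z(f)$ yields exactly $\n{\nu(A)-\mu(A)}\leq v(\kappa,X)$, with the constant assembling into $24\lambda K$ after tracking the factors $3\lambda$ (from Proposition~\ref{dens}) and the numerical constants in the zero-linearity and splitting estimates.

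\emph{The hard part} will be bookkeeping the constants so that they genuinely collapse to $24\lambda K$, and, more subtly, arranging the argument so that the uncountable-cofinality hypothesis (respectively the $(\lambda,\kappa)$-injectivity hypothesis) is used correctly. The cofinality condition is what upgrades bare $\kappa$-injectivity to $(\lambda,\kappa)$-injectivity via the $\ell_1$-sum argument cited after Definition~\ref{kappa}, so the two halves of the theorem are really one: prove the quantitative statement under $(\lambda,\kappa)$-injectivity, then invoke that upgrade to deduce $\tau(X)>\kappa$ whenever $\cf(\kappa)>\omega$. I would therefore prove \eqref{kappa_v} first and obtain the qualitative conclusion as a corollary. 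A secondary technical point is verifying that $\mathfrak{X}$ has density character strictly less than $\kappa$ rather than merely at most $\abs{\F}$; since $\abs{\F}<\kappa$ and the simple functions over $\F$ have a dense set of size $\abs{\F}$, this is automatic, but it is the place where the cardinal bound must be checked rather than assumed.

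**I expect** the zero-linearity verification and the constant-tracking to be the two genuine pieces of work, since the structural skeleton (simple functions $\rightsquigarrow$ $\Ext$ vanishing $\rightsquigarrow$ linear approximation $\rightsquigarrow$ additive measure) is dictated by the results already assembled in Sections~3 and~5.
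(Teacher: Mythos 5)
Your overall skeleton (simple functions $\rightsquigarrow$ zero-linear map $\rightsquigarrow$ linear approximation $\rightsquigarrow$ measure $\mu(A)=h(\ind_A)$, with the cofinality hypothesis used only to upgrade $\kappa$-injectivity to $(\lambda,\kappa)$-injectivity) matches the paper's, but two of your steps have genuine gaps, and they are exactly the two points the paper's proof is engineered around. The first is the choice of norm. You take the \emph{sup-norm} completion of the $\F$-simple functions and claim that putting $f(\ind_A)=\nu(A)$ and ``extending by the dyadic argument of Theorem~\ref{T1}'' gives a zero-linear map with $Z(f)=O(K)$. But Theorem~\ref{T1} goes in the opposite direction (from a quasi-linear map to a nearly additive set function); the passage from a $1$-additive $\nu$ to a zero-linear map is precisely what the paper flags as ``not so straightforward as the reverse one,'' and with the sup-norm it fails quantitatively, already for $X=\R$. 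Indeed, let $\Omega=\{1,\ldots,n\}$, let $\mu$ be additive, let $\rho(A)=K$ for singletons and $0$ otherwise, and let $\nu=\mu+\rho$ (this is $2K$-additive; rescaling changes nothing). For the canonical extension $f\bigl(\sum_j\alpha_j\ind_{A_j}\bigr)=\sum_j\alpha_j\nu(A_j)$ (distinct values, disjoint sets) and $x_i=\ind_{\{1,\ldots,i\}}$, $i=1,\ldots,n$, one computes $\bigl\|f\bigl(\sum_i x_i\bigr)-\sum_i f(x_i)\bigr\|\geq K n(n+1)/2-K$, while $\sum_i\n{x_i}_\infty=n$; so no uniform zero-linearity constant exists. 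Moreover, any recipe for producing a sup-norm zero-linear extension with constant $O(K)$ for a general target essentially presupposes a decomposition $\nu=\mu+(\mbox{bounded})$, i.e.\ presupposes the very stability being proved. This is why the paper works instead with $\n{x}_\F=\inf\bigl\{\sum_i\abs{\alpha_i}\colon x=\sum_i\alpha_i\ind_{A_i}\bigr\}$ on the span of characteristic functions: with $(1+\e)$-optimal representations, the Kalton--Roberts scalarisation $x^\ast\circ\nu\approx\mu_{x^\ast}$ yields $Z(f_0)\leq 2(1+\e)K$ uniformly in $\F$ and in the target space.

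The second, more serious gap is uniformity of the approximation constant. You invoke Proposition~\ref{dens}(e) to get $\Ext(\mathfrak{X},X)=0$ and then Theorem~\ref{stz_Kalton} to obtain a linear $h$ with $\dist(f,h)\leq B\cdot Z(f)$, asserting that ``$B$ is controlled by the $(\lambda,\kappa)$-injectivity constant.'' Theorem~\ref{stz_Kalton} gives no such control: its constant $B$ depends on the pair $(X_0,Y)$, hence on the particular algebra $\F$, whereas the $\kappa$-$\SVM$ property demands one constant valid for \emph{all} $\F$ with $\abs{\F}<\kappa$ (without uniformity you only get that each algebra separately admits some finite constant, which is vacuous here). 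This is exactly the content of the Remark following the paper's proof of Theorem~\ref{k_injective}: a direct appeal to $\Ext(\mathcal{X}_\F,X)=0$ is insufficient, and instead the paper constructs the twisted sum explicitly --- it forms $X\oplus_{f_0}X_\F$, passes to the enveloping norm and completes, applies Pe\l czy\'nski's renorming (Theorem~\ref{Pelczynski}) so that $X$ sits isometrically, uses the quantitative ``moreover'' clause of Proposition~\ref{dens}(d) to get a projection of norm at most $3\lambda$, and thereby proves $\dist(f-h)\leq 3\lambda(1+Z(f))^2$ for \emph{every} $f\in\Xi(X_\F,X)$, with a bound depending only on $\lambda$, not on $\F$. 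The homogeneity trick (if $b$ is the optimal constant in Theorem~\ref{stz_Kalton}(iii), then $bx\leq 3\lambda(1+x)^2$ for all $x>0$ forces $b\leq 12\lambda$) then linearises this bound, and combined with $Z(f_0)\leq 2(1+\e)K$ it assembles into $v(\kappa,X)\leq 24\lambda K$. Your proposal correctly identifies the skeleton and the role of the cofinality hypothesis, but it omits both the norm that makes the zero-linearity uniform and the explicit construction that makes the splitting constant uniform; these are the actual mathematical content of the theorem.
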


An important role in our argument is played by a certain Banach space depending on a~given set algebra. We define it as follows. Suppose that $\F\subset 2^\Om$ is an infinite algebra of subsets of $\Om$. Let $X_\F$ be the linear subspace of real-valued functions defined on $\Om$ which is generated by the set $\{\ind_A:\, A\in\F\}$ of characteristic functions of all members of $\F$, considered with the pointwise operations. We equip $X_\F$ with the norm defined by $$\n{x}_\F=\inf\left\{\sum_{i=1}^k\abs{\alpha_i}:\, k\geq 0, \alpha_i\in\R , A_i\in\F\mbox{ and }x=\sum_{i=1}^k\alpha_i\ind_{A_i}\right\} .$$It is easy to check that $\n{\cdot}_\F$ is, in fact, a norm on $X_\F$. We denote $\mathcal{X}_\F$ the completion of $X_\F$ with respect to this norm. Obviously, the density character of the Banach space $\mathcal{X}_\F$ equals the cardinality of $\F$ (just consider finite linear combinations of $\ind_A$, for $A\in\F$, having rational coefficients). 

The importance of the space $\mathcal{X}_\F$ consists in the possibility of producing a zero-linear map from a given $1$-additive function acting on $\F$. This procedure is not so straightforward as the reverse one (which we have seen in Theorems \ref{T1} and \ref{T2}).

We will also need the following well-known renorming theorem \cite[Proposition 1]{pelczynski}.
\begin{theorem}[Pe\l czy\'nski \cite{pelczynski}]\label{Pelczynski}
Let $(X,\n{\cdot})$ be a Banach space containing a subspace $Y$ isomorphic to $Y_1$ by an isomorphism $U\colon Y\to Y_1$ satisfying $\n{y}\leq\n{U(y)}\leq c\n{y}$ for $y\in Y$ with some $c\geq 1$. Then there is a norm $\n{\cdot}^\prime$ on $X$ satisfying $\n{x}\leq\n{x}^\prime\leq c\n{x}$ for $x\in X$ and such that the identity mapping $\id\colon (Y,\n{\cdot}^\prime)\to Y_1$ is an isometry.
\end{theorem}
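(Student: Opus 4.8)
The plan is to transport the norm of $Y_1$ back onto $Y$ and then extend it to all of $X$ by an infimal convolution with a rescaled copy of the original norm. First I set $\n{y}_U:=\n{U(y)}$ for $y\in Y$; this is visibly a norm on $Y$, and the hypothesis reads $\n{y}\leq\n{y}_U\leq c\n{y}$. I then define, for every $x\in X$,
$$\n{x}^\prime:=\inf\bigl\{\n{y}_U+c\n{x-y}\colon y\in Y\bigr\},$$
the infimal convolution of the restricted norm $\n{\cdot}_U$ with $c\n{\cdot}$. The entire proof then amounts to checking that this functional has the four required features, so that $\n{\cdot}^\prime$ is the sought renorming.

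That $\n{\cdot}^\prime$ is a norm is routine and I would dispatch it quickly: homogeneity is immediate from the homogeneity of $\n{\cdot}_U$ and $\n{\cdot}$ together with the symmetry of both, while the triangle inequality follows by feeding $y=y_1+y_2$ into the defining infimum for $x_1+x_2$ and applying the triangle inequalities for $\n{\cdot}_U$ and $\n{\cdot}$ separately before passing to the infimum over $y_1,y_2$. Positive-definiteness will drop out of the lower comparison below. For the two-sided comparison with the ambient norm, the upper bound $\n{x}^\prime\leq c\n{x}$ is obtained by taking $y=0$ as a competitor, whereas for the lower bound I use $\n{y}_U\geq\n{y}$ together with $c\geq1$ to see that every competitor obeys $\n{y}_U+c\n{x-y}\geq\n{y}+\n{x-y}\geq\n{x}$; taking the infimum yields $\n{x}\leq\n{x}^\prime$, which in particular forces $\n{\cdot}^\prime$ to be definite.

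The one step that genuinely requires care—and where I expect the (modest) crux to lie—is showing that $\n{\cdot}^\prime$ reproduces $\n{\cdot}_U$ on $Y$, which is exactly the assertion that $\id\colon(Y,\n{\cdot}^\prime)\to Y_1$ (equivalently $U$) is an isometry. For $y\in Y$ the estimate $\n{y}^\prime\leq\n{y}_U$ is got by using $y$ itself as a competitor. The reverse inequality is the delicate one: for an arbitrary competitor $z\in Y$ I first note that $y-z\in Y$, so the right-hand hypothesis gives $c\n{y-z}\geq\n{y-z}_U$, and then the triangle inequality for $\n{\cdot}_U$ yields $\n{z}_U+\n{y-z}_U\geq\n{y}_U$; taking the infimum over $z$ produces $\n{y}^\prime\geq\n{y}_U$, hence equality. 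This is the unique place where the \emph{full} two-sided bound $\n{y}\leq\n{U(y)}\leq c\n{y}$ and the constant $c\geq1$ are both used in an essential way, and establishing that lower bound cleanly is the heart of the argument; everything else is bookkeeping.
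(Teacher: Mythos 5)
Your proof is correct, and it is exactly the classical argument: the paper states this result without proof (citing Pe\l czy\'nski's Proposition~1), and the standard proof there is precisely your infimal convolution $\n{x}^\prime=\inf\{\n{U(y)}+c\n{x-y}\colon y\in Y\}$, with the same verification that the two-sided estimate follows from $y=0$ and from $\n{y}_U+c\n{x-y}\geq\n{y}+\n{x-y}\geq\n{x}$, and that on $Y$ the competitor bound $\n{z}_U+c\n{y-z}\geq\n{z}_U+\n{y-z}_U\geq\n{y}_U$ yields the isometry. Nothing is missing.
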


\begin{proof}[Proof of Theorem \ref{k_injective}]
Assume $X$ is a $\kappa$-injective Banach space, $\F$ is a~set algebra of cardinality less than $\kappa$, and $\nu\colon\F\to X$ is a~$1$-additive function. Pick any $\e>0$. Let $f_0\colon X_\F\to m_0(\Gamma)$ be a mapping satisfying the following conditions:
\begin{itemize*}
\item[(a)] $f_0(\ind_A)=\nu(A)$ for $A\in\F$;
\item[(b)] $f_0(\lambda x)=\lambda f_0(x)$ for $\lambda\in\R$, $x\in X_\F$;
\item[(c)] for each $x\in X_\F$ we have $f_0(x)=\sum_{i=1}^k\alpha_i\nu(A_i)$, where $k\geq 0$, $\alpha_i\in\R$, $A_i\in\F$ (for $1\leq i\leq k$) satisfy $x=\sum_{i=1}^k\alpha_i\ind_{A_i}$ and $\sum_{i=1}^k\abs{\alpha_i}\leq (1+\e)\n{x}_\F$.
\end{itemize*}
In order to construct such a map, put $f_0(0)=0$, and for all $x\in X_\F$ with $\n{x}_\F=1$ define the values $f_0(x)$ in such a manner that they satisfy equalities required in (a) and (c). Next, for each pair of the form $(x,-x)$ pick any of its elements, say $-x$, and replace $f_0(-x )$ by $-f_0(x)$. Finally, extend $f_0$ homogeneously to ensure the validity of (b) and, consequently, also of (c).

Now, we will show that for any $x_1,\ldots ,x_n\in X_\F$ we have 
\begin{equation}\label{Zfm}
\Biggl\|f_0\Bigl(\sum_{i=1}^nx_i\Bigr)-\sum_{i=1}^nf_0(x_i)\Biggr\|\leq 2(1+\e)K\sum_{i=1}^n\n{x_i}_\F,
\end{equation}
and hence $f_0\in\Xi(X_\F,X)$ with $Z(f_0)\leq 2(1+\e)K$. 

Let $k_i\geq 0$, $\alpha_{ij}\in\R$ and $A_{ij}\in\F$ satisfy
\begin{equation}\label{xi}
x_i=\sum_{j=1}^{k_i}\alpha_{ij}\ind_{A_{ij}}\quad\mbox{for }1\leq i\leq n,
\end{equation}
with
$$f_0(x_i)=\sum_{j=1}^{k_i}\alpha_{ij}\nu(A_{ij})\quad\mbox{and}\quad\sum_{j=1}^{k_i}\abs{\alpha_{ij}}\leq (1+\e)\n{x_i}_\F\quad\mbox{for }1\leq i\leq n.$$Similarly, let $\ell\geq 0$, $\beta_j\in\R$ and $B_j\in\F$ be such that
\begin{equation}\label{xii}
\sum_{i=1}^nx_i=\sum_{j=1}^\ell\beta_j\ind_{B_j},
\end{equation}
with
$$f_0\Bigl(\sum_{i=1}^nx_i\Bigr)=\sum_{j=1}^\ell\beta_j\nu(B_j)\quad\mbox{and}\quad\sum_{j=1}^\ell\abs{\beta_j}\leq (1+\e)\Biggl\|\sum_{i=1}^nx_i\Biggr\|_\F .$$

To estimate the left-hand side of \eqref{Zfm} fix any $x^\ast\in X^\ast$ with $\n{x^\ast}=1$ and observe that the mapping $\F\ni A\mapsto x^\ast\nu(A)_\gamma$ is real-valued and $1$-additive. Hence, by the Kalton--Roberts Theorem \ref{KR}, there exists a~set additive function $\mu_{x^\ast}\colon\F\to\R$ such that $$\abs{x^\ast\nu(A)-\mu_{x^\ast}(A)}\leq K\quad\mbox{for each }A\in\F.$$

Consider the subalgebra $\F^\prime\subset\F$ generated by all $A_{ij}$, for $1\leq i\leq n$, $1\leq j\leq k_i$, and by $B_j$ for $1\leq j\leq\ell$. Since $\F^\prime$ is finite, we may identify it with a power set $2^\Om$, where $\Om$ is the set of all atoms in $\F^\prime$. Factoring each $A_{ij}$ and $B_j$ into elements of $\Om$, and making use of \eqref{xi}, \eqref{xii} and the additivity of $\mu_{x^\ast}$, we obtain
$$\sum_{i=1}^n\sum_{j=1}^{k_i}\alpha_{ij}\mu_{x^\ast}(A_{ij})=\sum_{j=1}^\ell\beta_j\mu_{x^\ast}(B_j).$$ Therefore, the value of the functional $x^\ast$ at the vector under the sign of norm at the left-hand side of \eqref{Zfm} equals $$
D_{x^\ast}:=\sum_{j=1}^\ell\beta_j(x^\ast\nu(B_j)-\mu_{x^\ast}(B_j))-\sum_{i=1}^n\sum_{j=1}^{k_i}\alpha_{ij}(x^\ast\nu(A_{ij})-\mu_{x^\ast}(A_{ij}))$$ Consequently,
\begin{eqnarray*}
\abs{D_{x^\ast}} & \leq & K\Biggl(\sum_{i=1}^n\sum_{j=1}^{k_i}\abs{\alpha_{ij}}+\sum_{j=1}^\ell\abs{\beta_j}\Biggr)\\
& \leq & (1+\e)K\Biggl(\sum_{i=1}^n\n{x_i}_\F+\Biggl\|\sum_{i=1}^nx_i\Biggr\|_\F\Biggr)\leq  2(1+\e)K\sum_{i=1}^n\n{x_i}_\F,
\end{eqnarray*}
which implies \eqref{Zfm}.

Let $\Za_0=X\oplus_{f_0}X_\F$, that is, the direct sum equipped with the quasi-norm $\n{\cdot}_{f_0}$ defined by $$\n{(\xi,x)}_{f_0}=\n{x}_\F+\n{\xi-f_0(x)}.$$ By virtue of the zero-linearity of $f_0$, for every $(\xi_1,x_1),\ldots ,(\xi_n,x_n)\in\Za_0$ we have 
\begin{equation*}
\begin{split}
\Biggl\|\sum_{i=1}^n(\xi_i,x_i)\Biggr\|_{f_0} &=\Biggl\|\sum_{i=1}^nx_i\Biggr\|_\F+\Biggl\|\sum_{i=1}^n\xi_i-f_0\Biggl(\sum_{i=1}^nx_i\Biggr)\Biggr\|\\
& \leq\sum_{i=1}^n\n{x_i}_\F+\Biggl\|\sum_{i=1}^n(\xi_i-f_0(x_i))\Biggr\|+\Biggl\|f_0\Biggl(\sum_{i=1}^nx_i\Biggr)-\sum_{i=1}^nf_0(x_i)\Biggr\|\\
& \leq(1+Z(f_0))\sum_{i=1}^n\n{(\xi_i,x_i)}_{f_0}.
\end{split}
\end{equation*}
Therefore, the formula $$\nn{(\xi,x)}=\inf\Biggl\{\sum_{j=1}^k\n{(\xi_j,x_j)}_{f_0}:\, k\in\N,\,\, (\xi_j,x_j)\in\Za_0\,\mbox{ and }\,(\xi,x)=\sum_{j=1}^k(\xi_j,x_j)\Biggr\}$$ defines a norm on the space $\Za_0$ that satisfies 
\begin{equation}\label{normZ}
(1+Z(f_0))^{-1}\n{(\xi,x)}_{f_0}\leq\nn{(\xi,x)}\leq\n{(\xi,x)}_{f_0}\quad\mbox{for }(\xi,x)\in \Za_0.
\end{equation}

Let $(\Za,\n{\cdot}_\Za)$ be the completion of the space $(\Za_0,\nn{\cdot})$. We shall prove that $\Za$ is a twisted sum of $X$ and $\mathcal{X}_\F$. To this end define $i\colon X\to\Za$ by $i(\xi)=(\xi,0)$ and $q_0\colon\Za_0\to X_\F$ by $q_0(\xi,x)=x$. Then $q_0$ extends uniquely to an operator $q\colon\Za\to\mathcal{X}_\F$ satisfying $\ker(q)=i(X)$. For the last equality first observe that $i(X)\subseteq\ker(q)$, since obviously $\ker(q_0)=i(X)$. For the reverse inclusion suppose $z\in\ker(q)$ and $\lim_{n\to\infty}\n{(\xi_n,x_n)-z}_\Za=0$ for some sequence of $(\xi_n,x_n)\in\Za_0$. Then $$\lim_{n\to\infty}x_n=\lim_{n\to\infty}q(\xi_n,x_n)=q(z)=0,$$ thus $$\lim_{n\to\infty}\n{(f_0(x_n),x_n)}_\Za=\lim_{n\to\infty}\nn{(f_0(x_n),x_n)}\leq\lim_{n\to\infty}\n{(f_0(x_n),x_n)}_{f_0}=0.$$Therefore, we have $(\xi_n-f_0(x_n),0)\longrightarrow z$, and hence the sequence $(\xi_n-f_0(x_n))$, being a~Cauchy sequence, converges to some $\xi_0\in X$ with $z=i(\xi_0)$. Consequently, the sequence 
\begin{equation}\label{exGamma}
\exi{i}{q}{X}{\Za}{\mathcal{X}_\F}
\end{equation}
is exact.

We have already shown that $i$ is an isomorphism onto the closed subspace $i(X)$ of $\Za$; observe also that \eqref{normZ} implies $$\n{i(\xi)}_\Za=\n{(\xi,0)}_\Za=\nn{(\xi,0)}\geq (1+Z(f_0))^{-1}\n{(\xi,0)}_{f_0}=(1+Z(f_0))^{-1}\n{\xi}$$ for each $\xi\in X$, thus $\n{i^{-1}}\leq 1+Z(f_0)$. By Theorem \ref{Pelczynski}, there exists a~norm $\n{\cdot}_\Za^\prime$ on $\Za$ satisfying 
\begin{equation}\label{nprime}
\n{z}_\Za\leq\n{z}_\Za^\prime\leq (1+Z(f_0))\n{z}_\Za
\end{equation}
and such that the identity map $\id\colon (i(X),\n{\cdot}_\Za^\prime)\to X$ is an isometry. Therefore, the space $(\Za,\n{\cdot}_\Za^\prime)$ contains an isometric copy of $X$ and, since the diagram \eqref{exGamma} is an exact sequence, we have also $$(\Za,\n{\cdot}_\Za^\prime)/i(X)\simeq\mathcal{X}_\F.$$

Now, let $\lambda\geq 1$ be such that $X$ is $(\lambda,\kappa)$-injective. Since the density character of $\mathcal{X}_\F$ equals $\max\{\omega,\abs{\F}\}<\kappa$ ($\omega$ in the case where $\F$ is finite), the `moreover' part of Proposition \ref{dens}(d) implies that there exists a~projection $P\colon (\Za,\n{\cdot}_\Za^\prime)\to (i(X),\n{\cdot}_\Za^\prime)$ with $\n{P}\leq 3\lambda$. 

Define a linear map $h\colon X_\F\to X$ by $h(x)=i^{-1}\circ P(0,-x)$. Using \eqref{nprime} and \eqref{normZ} we get 
\begin{eqnarray*}
\begin{split}
\n{f_0(x)-h(x)} &=\n{i^{-1}\circ P(f_0(x),x)}\leq (1+Z(f_0))\n{P(f_0(x),x)}_\Za\\
& \leq (1+Z(f_0))\n{P(f_0(x),x)}_\Za^\prime\leq 3\lambda(1+Z(f_0))\n{(f_0(x),x)}_\Za^\prime\\
& \leq 3\lambda(1+Z(f_0))^2\n{(f_0(x),x)}_\Za=3\lambda(1+Z(f_0))^2\nn{(f_0(x),x)}\\
& \leq 3\lambda(1+Z(f_0))^2\n{(f_0(x),x)}_{f_0}=3\lambda(1+Z(f_0))^2\n{x}_\F\quad\mbox{for }x\in X_\F.
\end{split}
\end{eqnarray*}

Since the concrete form of the zero-linear map $f_0$ was not important in the establishing the existence of $h$, we have actually proved that for every $f\in\Xi(X_\F,X)$ there is a linear map $h\colon X_\F\to X$ such that $\dist(f-h)\leq 3\lambda(1+Z(f))^2$. Therefore, if we denote by $b$ the infimum of all the values of $B$ for which the assertion (iii) of Theorem \ref{stz_Kalton} (applied for quasi-linear maps defined on the dense subspace $X_\F$ of $\mathcal{X}_\F$ and taking values in $X$) holds true, then we infer that $3\lambda(1+x)^2\geq bx$ must hold for every $x\in (0,\infty)$. Hence, $b\leq 12\lambda$, that is, for each $\delta>0$ we may find a linear map $g\colon X_\F\to X$ such that $$\dist(f_0-g)\leq (12\lambda+\delta)Z(f_0)\leq 2(12\lambda+\delta)(1+\e)K.$$

Finally, define $\mu\colon\F\to X$ by $\mu(A)=g(\ind_A)$. Then $\mu$ is a vector measure and $$\n{\nu(A)-\mu(A)}\leq 2(12\lambda+\delta)(1+\e)K\quad\mbox{for each }A\in\F,$$ 
where $\delta$ and $\e$ may be arbitrarily small positive numbers. Hence, we get inequality \ref{kappa_v} and the proof is completed.
\end{proof}

\begin{remark}
The explicit construction of the exact sequence \eqref{exGamma}, instead of a~direct application of the equality $\Ext(\mathcal{X}_\F,X)=0$ (which follows from the $\kappa$-injectivity of $X$ and Proposition \ref{dens}(e)), was needed to make sure that the estimate of $\dist(f_0-g)$ would be independent on the given algebra $\F$. The only thing which should, and which did, play a~role was the cardinality of $\F$. This is also a~reason why we could not use the strategy of extending $f_0$ to a~map $f\in\Xi(\mathcal{X}_\F,X)$ via Theorem \ref{Zextension}; doing this there is no way to keep a~control under $Z(f)$.
\end{remark}
\begin{remark}\label{8cK}
A careful inspection of the above proof shows that the factor $24\lambda$ arises as $2\cdot 4\cdot 3\lambda$, where $2$ comes from the estimate of $Z(f_0)$ via inequality \eqref{Zfm}, $3\lambda$ comes from Proposition \ref{dens}, and $4$ comes from the inequality $3\lambda(1+x)^2\geq bx$. Thus, if we knew that for some constant $c>0$ the underlying Banach space $X$ is $c$-complemented in every Banach space $Z$ such that $Z/X$ has density character less than $\kappa$, then we would get possibly better estimate: $v(\kappa,X)\leq 8cK$. Let us use this observation in what follows.
\end{remark}

The classical Sobczyk theorem \cite{sobczyk} (cf. also \cite[\S 2.5]{albiac_kalton} and the survey \cite{sanchez_castillo_yost}) says that $c_0$ is $2$-complemented in every separable Banach superspace. It was generalised by Hasanov \cite{hasanov} in the way we will now explain.

For a given cardinal number $\Gamma$ and a filter $\G$ of subsets of $\Gamma$ we define a~subspace $c_0(\Gamma,\G)$ of $\ell_\infty(\Gamma)$ by $$c_0(\Gamma,\G)=\bigl\{x\in\ell_\infty(\Gamma)\colon\lim_\G x=0\bigr\}.$$  Call the filter $\G$ a~$\kappa$-{\it filter}, provided that for any collection $\{A_i:\, i\in I\}\subset\G$, with $\abs{I}<\kappa$, we have $\bigcap_{i\in I}A_i\in\G$.
\begin{theorem}[Sobczyk \cite{sobczyk} \& Hasanov \cite{hasanov}]\label{Hasanov}
Let $\Gamma$ and $\kappa$ be cardinal numbers and let $\G$ be a~$\kappa$-filter of subsets of $\Gamma$. Then, the space $c_0(\Gamma,\G)$ is $2$-complemented in every Banach space $Z$ containing it isometrically and such that the density character of $Z/c_0(\Gamma,\G)$ is less than, or equal to $\kappa$.
\end{theorem}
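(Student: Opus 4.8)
The plan is to imitate Sobczyk's classical argument, replacing the \ws\ metrisability of the dual ball (which is what the separable case provides) by the $\kappa$-completeness of $\G$. Write $Y=c_0(\Gamma,\G)$ and assume $Y\subseteq Z$ isometrically with the density character of $Z/Y$ at most $\kappa$. For each $\gamma\in\Gamma$ the coordinate functional $e_\gamma^\ast\in Y^\ast$ has norm one, so by Hahn--Banach it extends to some $z_\gamma^\ast\in Z^\ast$ with $\n{z_\gamma^\ast}\leq 1$. The map $z\mapsto(z_\gamma^\ast(z))_\gamma$ already sends $Z$ into $\ell_\infty(\Gamma)$, has norm $\leq 1$ and restricts to the identity on $Y$; its only defect is that its range need not lie in $Y$. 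The whole game is therefore to correct the $z_\gamma^\ast$ by functionals $v_\gamma^\ast$ lying in the annihilator $Y^\perp\cong(Z/Y)^\ast$, with $\n{v_\gamma^\ast}\leq 1$, in such a way that $w_\gamma^\ast:=z_\gamma^\ast-v_\gamma^\ast$ becomes \ws-null along $\G$, i.e. $\lim_\G w_\gamma^\ast(z)=0$ for every $z\in Z$. Subtracting an element of $Y^\perp$ does not disturb the restriction to $Y$, so $w_\gamma^\ast|_Y=e_\gamma^\ast$ and $\n{w_\gamma^\ast}\leq 2$; then $Pz:=(w_\gamma^\ast(z))_\gamma$ lands in $c_0(\Gamma,\G)=Y$ and is a projection with $\n{P}\leq 2$, as required.

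First I would reduce the nullity requirement to a small test family. Since the density character of $Z/Y$ is $\leq\kappa$, pick $\{d_\alpha\colon\alpha<\mu\}$ with $\mu\leq\kappa$ whose images are dense in the unit ball of $Z/Y$, lifted so that $\n{d_\alpha}\leq 2$. On $Y$ the corrected functionals act as $e_\gamma^\ast$, and $\gamma\mapsto e_\gamma^\ast(x)=x(\gamma)$ is $\G$-null for every $x\in Y$ \emph{by the very definition} of $c_0(\Gamma,\G)$. A routine $\e$/density estimate then shows that once $\lim_\G w_\gamma^\ast(d_\alpha)=0$ holds for every $\alpha<\mu$, it holds for all $z\in Z$. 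So it suffices to choose the corrections $v_\gamma^\ast\in Y^\perp\cap B_{Z^\ast}$ so that $\gamma\mapsto w_\gamma^\ast(d_\alpha)$ is $\G$-null for each $\alpha$.

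The technical heart is the construction of these corrections, and here $\G$ being a $\kappa$-filter is used decisively. The guiding fact is that every \ws-cluster point of $(z_\gamma^\ast)_\gamma$ along any ultrafilter $\mathcal U\supseteq\G$ lies in $Y^\perp$, because for $x\in Y$ one has $\lim_{\mathcal U}z_\gamma^\ast(x)=\lim_{\mathcal U}x(\gamma)=0$. Fixing a finite $F\subseteq\mu$ and letting $C_F\subseteq\R^F$ be the (compact, convex, symmetric) image of $Y^\perp\cap B_{Z^\ast}$ under $z^\ast\mapsto(z^\ast(d_\alpha))_{\alpha\in F}$, the bounded function $g_F(\gamma):=\dist\bigl((z_\gamma^\ast(d_\alpha))_{\alpha\in F},\,C_F\bigr)$ has $\lim_{\mathcal U}g_F=0$ for \emph{every} $\mathcal U\supseteq\G$, whence $\lim_\G g_F=0$; thus the good set $\{\gamma\colon g_F(\gamma)<\e\}$ belongs to $\G$. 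Now the $\kappa$-filter property lets me intersect such good sets over any subfamily of size $<\kappa$, and a \ws-compactness argument over the directed set of finite subsets produces, for $\gamma$ in a $\G$-large set, a single $v_\gamma^\ast\in Y^\perp\cap B_{Z^\ast}$ with
\[
\sup_{\alpha}\,\bigl|z_\gamma^\ast(d_\alpha)-v_\gamma^\ast(d_\alpha)\bigr|=\rho(\gamma),\qquad \lim_\G\rho=0 .
\]
Taking $v_\gamma^\ast$ to be such an optimal simultaneous approximant (the infimum is attained by \ws-lower semicontinuity over the compact set $Y^\perp\cap B_{Z^\ast}$) gives $|w_\gamma^\ast(d_\alpha)|\leq\rho(\gamma)\to_\G 0$ for every $\alpha$, which closes the argument.

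The step I expect to be the main obstacle is precisely this last one: producing \emph{one} family $(v_\gamma^\ast)$ that is good against \emph{all} test vectors simultaneously, rather than merely a cluster point vanishing on $Y$. In the separable Sobczyk theorem this is handled by the distance of $z_n^\ast$ to the \ws-compact set $Y^\perp\cap B_{Z^\ast}$ tending to zero together with the uniform continuity of evaluations on a metrisable ball; here the $\kappa$-completeness of $\G$ substitutes for that countable diagonalisation, and the delicate points are keeping $\n{v_\gamma^\ast}\leq 1$ throughout (so that the constant stays exactly $2$) and treating the boundary case where the density character equals $\kappa$ rather than being strictly below it, where the intersections must be organised along a transfinite exhaustion instead of taken all at once.
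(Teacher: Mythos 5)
A preliminary remark: the paper does not prove this theorem at all --- it imports it from Sobczyk and Hasanov --- so your proposal has to stand on its own. Its skeleton is the correct (and standard, Veech-style) one, and your first two steps are sound: the reduction to a lifted test family $(d_\alpha)_{\alpha<\mu}$, $\mu\leq\kappa$, is the routine density estimate you describe, and the local lemma that each good set $G_{F,\e}:=\{\gamma\colon g_F(\gamma)<\e\}$ belongs to $\G$ (via ultrafilters refining $\G$) is correct. The genuine gap is the gluing step: you assert that the $v_\gamma^\ast$ can be chosen so that the \emph{uniform} error $\rho(\gamma)=\sup_{\alpha<\mu}\abs{z_\gamma^\ast(d_\alpha)-v_\gamma^\ast(d_\alpha)}$ is $\G$-null. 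This is indeed true when $\mu<\kappa$ and $\kappa>\omega$ (then all $\max\{\mu,\omega\}<\kappa$ good sets can be intersected at once, and weak$^\ast$ compactness gives exact matching, $\rho=0$, on a single member of $\G$), but in the boundary case $\mu=\kappa$ --- which the theorem permits, and which already contains Sobczyk's theorem ($\kappa=\omega$, $Z/Y$ separable) --- the uniform statement is false, and no reorganisation of the intersections along a transfinite exhaustion can recover it, because it fails for reasons having nothing to do with the filter. Concretely: take $\Gamma=\omega$, $\G$ the Fr\'echet filter (so $\kappa=\omega$ and $c_0(\Gamma,\G)=c_0$), $Z=c$, whose quotient map is $x\mapsto\lim x$ and for which $Y^\perp\cap B_{Z^\ast}=\{\lambda\lim\colon\abs{\lambda}\leq1\}$. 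Choose $t_k$ dense in $[-1,1]$, the perfectly legitimate lifts $d_k=t_k\ind_\omega+\ind_{\{k\}}$ (so $\n{d_k}_\infty\leq2$) and the legitimate extensions $z_n^\ast=$ evaluation at the $n$th coordinate. Then for every $n$
$$\inf_{\abs{\lambda}\leq1}\,\sup_k\,\bigl|d_k(n)-\lambda t_k\bigr|\;\geq\;\inf_{\abs{\lambda}\leq1}\,\max\bigl\{\abs{1-\lambda},\,1-\abs{1-\lambda}\bigr\}\;=\;1/2,$$
so $\rho\geq1/2$ identically, although the theorem holds here and the per-vector requirement is met by $v_n^\ast=\lim$. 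Weak$^\ast$ clustering along $\G$ is pointwise in the test vectors; it cannot be made uniform over infinitely many lifts carrying independent $Y$-components.

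Fortunately, uniformity is never needed: $Pz\in c_0(\Gamma,\G)$ only requires $\lim_\G w_\gamma^\ast(d_\alpha)=0$ for each \emph{fixed} $\alpha$, and this weaker statement is exactly what a correctly organised exhaustion yields. For $\kappa>\omega$ put, for each $\beta<\kappa$, $H_\beta:=\bigcap\bigl\{G_{F,1/n}\colon F\subseteq\beta\mbox{ finite},\ n<\omega\bigr\}\in\G$ (fewer than $\kappa$ sets are intersected, by $\kappa$-completeness); the $H_\beta$ decrease, and if $\beta(\gamma):=\sup\{\beta\colon\gamma\in H_\beta\}$ then $g_F(\gamma)=0$ for every finite $F\subseteq\beta(\gamma)$, so weak$^\ast$ compactness of $Y^\perp\cap B_{Z^\ast}$ (your finite-intersection argument) provides $v_\gamma^\ast$ agreeing with $z_\gamma^\ast$ on every $d_\alpha$ with $\alpha<\beta(\gamma)$. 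Then $w_\gamma^\ast(d_\alpha)=0$ for all $\gamma\in H_{\alpha+1}\in\G$, which is all that the projection needs. For $\kappa=\omega$ the sets $H_\beta$ are countable intersections and unavailable, so you must instead run the classical diagonal with shrinking tolerances: $\tilde{H}_m:=\bigcap_{k\leq m}G_{\{0,\ldots,k\},1/k}$ is a finite intersection lying in $\G$, and choosing $v_\gamma^\ast$ to approximate $z_\gamma^\ast$ within $1/m(\gamma)$ on $\{d_0,\ldots,d_{m(\gamma)}\}$, where $m(\gamma)=\sup\{m\colon\gamma\in\tilde{H}_m\}$ (a weak$^\ast$ cluster point of the finite-stage approximants handles $m(\gamma)=\infty$), gives $\abs{w_\gamma^\ast(d_k)}\leq1/m$ on $\tilde{H}_m$ whenever $k\leq m$. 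With your displayed uniform claim replaced by this per-vector version, the rest of your argument goes through verbatim and the constant $2$ survives intact.
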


Denote $m_0(\Gamma)$ the space $c_0(\Gamma,\G_\Gamma)$, where $\G_\Gamma=\{A\subset\Gamma\colon\abs{\Gamma\setminus A}<\Gamma\}$ (in particular, $m_0(\omega)=c_0$). Let $\cf(\Gamma)$ stand for the cofinality of $\Gamma$ and $\cf(\Gamma)^+$ for its cardinal successor.

\begin{corollary}\label{cfHasanov}
Let $\Gamma$ be an infinite cardinal number. Then, the space $m_0(\Gamma)$ has the $\cf(\Gamma)^+$-$\SVM$ property, in other words, $\tau(m_0(\Gamma))>\cf(\Gamma)^+$. Moreover, 
\begin{equation}\label{m0_v}
v(\cf(\Gamma)^+,m_0(\Gamma))\leq 16K.
\end{equation}
\end{corollary}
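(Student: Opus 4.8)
The plan is to combine the Sobczyk--Hasanov theorem with the sharpened estimate recorded in Remark \ref{8cK}, the point being that $m_0(\Gamma)$, while not injective, enjoys a uniform complementation property against quotients of controlled density character. First I would verify that the filter $\G_\Gamma=\{A\subset\Gamma\colon\abs{\Gamma\setminus A}<\Gamma\}$ is a $\cf(\Gamma)$-filter. Given a family $\{A_i\}_{i\in I}\subset\G_\Gamma$ with $\abs{I}<\cf(\Gamma)$, one has $\Gamma\setminus\bigcap_iA_i=\bigcup_i(\Gamma\setminus A_i)$, a union of fewer than $\cf(\Gamma)$ sets each of cardinality less than $\Gamma$; by the defining property of cofinality the supremum of these cardinalities stays below $\Gamma$, and hence so does the union. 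Thus $\bigcap_iA_i\in\G_\Gamma$, which confirms that $\G_\Gamma$ is a $\cf(\Gamma)$-filter. This set-theoretic step is what forces the exponent $\cf(\Gamma)^+$, rather than $\Gamma^+$, into the statement.

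The heart of the argument is then to apply Theorem \ref{Hasanov} with its parameter equal to $\cf(\Gamma)$: it tells us that $m_0(\Gamma)=c_0(\Gamma,\G_\Gamma)$ is $2$-complemented in every Banach space $Z$ containing it \emph{isometrically} and satisfying that the density character of $Z/m_0(\Gamma)$ is at most $\cf(\Gamma)$. The crucial observation aligning the hypotheses is that the condition ``density character less than $\cf(\Gamma)^+$'', which is the relevant constraint for the $\cf(\Gamma)^+$-$\SVM$ property, is literally the condition ``density character at most $\cf(\Gamma)$'' demanded by Hasanov's theorem.

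With this in hand I would invoke Remark \ref{8cK}. Recall that in the proof of Theorem \ref{k_injective} the underlying space $X=m_0(\Gamma)$ is embedded isometrically into the renormed twisted sum $(\Za,\n{\cdot}_\Za^\prime)$ by means of Theorem \ref{Pelczynski}, and the quotient $(\Za,\n{\cdot}_\Za^\prime)/i(X)$ is isomorphic to $\mathcal{X}_\F$, whose density character is $\max\{\omega,\abs{\F}\}$. When $\abs{\F}<\cf(\Gamma)^+$, this density character is at most $\cf(\Gamma)$ (using $\omega\leq\cf(\Gamma)$ since $\Gamma$ is infinite), so the previous step supplies a projection of norm at most $2$ onto the isometric copy of $m_0(\Gamma)$. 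Hence $m_0(\Gamma)$ is $2$-complemented in exactly the spaces $Z$ arising in that construction, and Remark \ref{8cK} applied with $c=2$ yields $v(\cf(\Gamma)^+,m_0(\Gamma))\leq 8\cdot 2\cdot K=16K$, which is \eqref{m0_v}.

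The one point needing care is that Hasanov's complementation holds only for \emph{isometric} superspaces, whereas Remark \ref{8cK} is phrased for arbitrary superspaces of controlled quotient density. I would therefore emphasise that the space $Z=(\Za,\n{\cdot}_\Za^\prime)$ produced inside the proof of Theorem \ref{k_injective} is renormed by Theorem \ref{Pelczynski} precisely so that $m_0(\Gamma)$ sits isometrically in it; consequently the isometric hypothesis of Theorem \ref{Hasanov} is met, the constant $c=2$ is legitimate in Remark \ref{8cK}, and the estimate \eqref{Zfm} controlling $Z(f_0)$ by $2(1+\e)K$ may be reused verbatim. Everything else is the routine bookkeeping already carried out in the proof of Theorem \ref{k_injective}, with the factor $3\lambda$ from Proposition \ref{dens} simply replaced by the complementation constant $2$.
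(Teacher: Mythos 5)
Your proposal is correct and takes essentially the same route as the paper's own proof: check that $\G_\Gamma$ is a $\cf(\Gamma)$-filter via the cofinality argument, then rerun the proof of Theorem \ref{k_injective} for an algebra $\F$ with $\abs{\F}\leq\cf(\Gamma)$, letting Theorem \ref{Hasanov} supply the norm-$2$ projection onto the isometric copy of $m_0(\Gamma)$ inside the Pe\l czy\'nski-renormed twisted sum, and conclude from Remark \ref{8cK} with $c=2$ that $v(\cf(\Gamma)^+,m_0(\Gamma))\leq 16K$. Your explicit observation that the renorming from Theorem \ref{Pelczynski} is precisely what makes the isometric hypothesis of Hasanov's theorem applicable is the one point the paper leaves implicit in the phrase ``repeating the proof of Theorem \ref{k_injective}''.
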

\begin{proof}
If $\{A_i\colon i\in I\}\subset\G_\Gamma$ and $\abs{I}<\cf(\Gamma)\leq\Gamma$ then $\mu:=\sup\{\abs{\Gamma\setminus A_i}\colon i\in I\}<\Gamma$, hence
$$\sum_{i\in I}\abs{\Gamma\setminus A_i}\leq\mu\cdot\abs{I}<\Gamma,$$which shows that $\G_\Gamma$ is a~$\cf(\Gamma)$-filter. Repeating the proof of Theorem \ref{k_injective} for an arbitrary set algebra $\F$ with $\abs{\F}\leq\cf(\Gamma)$ and taking into account Theorem \ref{Hasanov}, jointly with Remark \ref{8cK} (for $c=2$), we get inequality \eqref{m0_v}.
\end{proof}

Recall that for a given compact, Hausdorff space $\Omega$ the derived set $\Omega^\prime$ is defined to be the set of all its accumulation points and, recursively, $\Omega^{(n+1)}=(\Omega^{(n)})^\prime$. We say that $\Omega$ is of {\it finite height}, provided $\Omega^{(n)}=\varnothing$ for some $n\in\N$, in which case we define the ({\it Cantor--Bendixson}) {\it height} of $\Omega$ to be the least such $n$. According to \cite[Proposition 1.24]{aviles}, if $\Omega$ is a~compact, Hausdorff space of height $n$, then the Banach space $\mathcal{C}(\Omega)$ is $(2n-1)$-separably injective. So, we may note the following conclusion:
\begin{corollary}\label{height}
Let $\Omega$ be a compact, Hausdorff space of finite height $n$. Then, the space $\mathcal{C}(\Omega)$ has the $\omega_1$-$\SVM$ property, in other words, $\tau(\mathcal{C}(\Omega))>\omega_1$. Moreover, $$v(\omega_1,\mathcal{C}(\Omega))\leq 24(2n-1)K.$$
\end{corollary}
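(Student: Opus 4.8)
The plan is to observe that this corollary is an immediate consequence of combining the injectivity fact quoted just before the statement with Theorem \ref{k_injective}. First I would invoke \cite[Proposition 1.24]{aviles}, which asserts that for a compact, Hausdorff space $\Omega$ of finite height $n$ the space $\mathcal{C}(\Omega)$ is $(2n-1)$-separably injective. Unwinding the terminology of Definition \ref{kappa}, ``separably injective'' means $\omega_1$-injective, so ``$(2n-1)$-separably injective'' is precisely ``$(2n-1,\omega_1)$-injective''. This identification of vocabulary is the only genuinely nontrivial bookkeeping step, and it is immediate from the definitions.

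Next I would apply Theorem \ref{k_injective} with the choices $\kappa=\omega_1$ and $\lambda=2n-1$. Since $\cf(\omega_1)=\omega_1$ is uncountable, the first assertion of that theorem gives $\tau(\mathcal{C}(\Omega))>\omega_1$, that is, the $\omega_1$-$\SVM$ property. Because $\mathcal{C}(\Omega)$ is moreover $(2n-1,\omega_1)$-injective, the ``moreover'' part of Theorem \ref{k_injective}—which holds for $\kappa$ of arbitrary cofinality—applies verbatim and yields the quantitative bound $v(\omega_1,\mathcal{C}(\Omega))\leq 24\lambda K=24(2n-1)K$, exactly the estimate claimed.

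There is essentially no obstacle to overcome here: the entire analytic content has already been packaged into Theorem \ref{k_injective} and the external Proposition 1.24 of \cite{aviles}, so the proof reduces to feeding the constant $\lambda=2n-1$ coming from the height-$n$ injectivity result into the factor $24\lambda$ of the theorem. The only points worth a single line of care are (i) confirming that $(2n-1)$-separable injectivity coincides with $(2n-1,\omega_1)$-injectivity in the sense of Definition \ref{kappa}, and (ii) noting that $\omega_1$ has uncountable cofinality so that both the qualitative and the quantitative conclusions of Theorem \ref{k_injective} are available simultaneously. After these two observations the two displayed conclusions drop out directly.
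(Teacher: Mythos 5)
Your proposal is correct and is essentially identical to the paper's own (implicit) proof: the paper deduces the corollary directly from \cite[Proposition 1.24]{aviles}, which says $\mathcal{C}(\Omega)$ is $(2n-1)$-separably injective, i.e.\ $(2n-1,\omega_1)$-injective in the sense of Definition \ref{kappa}, and then applies Theorem \ref{k_injective} with $\kappa=\omega_1$ and $\lambda=2n-1$ to get both $\tau(\mathcal{C}(\Omega))>\omega_1$ and $v(\omega_1,\mathcal{C}(\Omega))\leq 24(2n-1)K$. Your two points of care (the vocabulary identification and the uncountable cofinality of $\omega_1$) are exactly the right ones, and there is no gap.
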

\section{The $\SVM$ characters of $m_0(\Gamma)$ and $\JL_\infty$}
\noindent
A natural question which arises in the context of Definition \ref{def_kSVM} is whether for every cardinal number $\kappa\geq\om$ there exists a Banach space $X$ with $\tau(X)=\kappa$. We will partially answer this question positively in the present section by calculating $\tau(m_0(\Gamma))$, which will also allow us to determine $\tau(c_0(\Gamma))$.

According to Corollary \ref{cfHasanov}, we have $\tau(m_0(\Gamma))\geq\cf(\Gamma)^{++}$. To get an upper estimate we shall generalise the Johnson--Lindenstrauss space $\JL_\infty$ (see \cite[Example 2]{johnson_lindenstrauss} and also \cite{yost (JL space)}) in the following way: Instead of using an uncountable almost disjoint family of subsets of $\omega$, let us use a~more general construction, due to Rosenthal.
\begin{proposition}[Rosenthal \cite{rosenthal}]\label{rosenthal}
Let $\Gamma$ be an infinite cardinal number. Then, there exists a~family $\Ra$ of subsets of $\Gamma$ satisfying the following conditions:
\begin{itemize*}
\item[{\rm (i)}] $\abs{\Ra}>\Gamma$;
\item[{\rm (ii)}] $\abs{A}=\Gamma$ for each $A\in\Ra$;
\item[{\rm (iii)}] for every distinct $A,B\in\Ra$ there is an ordinal number $\gamma<\Gamma$ such that for every $\alpha\in A\cap B$ we have $\alpha\leq\gamma$.
\end{itemize*}
\end{proposition}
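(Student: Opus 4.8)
The plan is to build the family $\Ra$ uniformly, without splitting into cases according to the cofinality of $\Gamma$, out of the graphs of a large family of pairwise ``eventually different'' functions $\Gamma\to\Gamma$, transported into $\Gamma$ by a suitable bijection $\Gamma\times\Gamma\to\Gamma$.

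First I would construct, by transfinite recursion, functions $f_\xi\colon\Gamma\to\Gamma$ for $\xi<\Gamma^+$ such that for every $\eta<\xi$ the agreement set $\{\alpha<\Gamma\colon f_\xi(\alpha)=f_\eta(\alpha)\}$ is bounded in $\Gamma$. At stage $\xi$ there are at most $\Gamma$ predecessors, so I fix a surjection $\beta\mapsto g_\beta$ from $\Gamma$ onto $\{f_\eta\colon\eta<\xi\}$ and set $f_\xi(\alpha)=\min\bigl(\Gamma\setminus\{g_\beta(\alpha)\colon\beta\leq\alpha\}\bigr)$. The forbidden set has cardinality $\leq\abs{\alpha}+1<\Gamma$, so the minimum exists; moreover, among the ordinals $0,1,\dots,\abs{\alpha}+1$ not all can lie in a set of size $\leq\abs{\alpha}+1$, so $f_\xi(\alpha)\leq\abs{\alpha}+1$ — this smallness is the crucial extra gain. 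If $f_\eta=g_{\beta_0}$, then $f_\xi(\alpha)\neq f_\eta(\alpha)$ for every $\alpha\geq\beta_0$, whence the agreement set is contained in $\beta_0$; in particular the $f_\xi$ are pairwise distinct.

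Next I would fix the canonical (G\"odel) bijection $\phi\colon\Gamma\times\Gamma\to\Gamma$ coming from the maximum-then-lexicographic well-ordering of pairs; its key feature is that $\phi[\delta\times\delta]$ is an initial segment of $\Gamma$ of cardinality $<\Gamma$, hence bounded, for every $\delta<\Gamma$. I then put $A_\xi=\phi\bigl[\{(\alpha,f_\xi(\alpha))\colon\alpha<\Gamma\}\bigr]$ and take $\Ra=\{A_\xi\colon\xi<\Gamma^+\}$. Condition (ii) is immediate, since each graph, and hence each $A_\xi$, has cardinality $\Gamma$; condition (i) holds because $\Gamma^+>\Gamma$ and the $A_\xi$ are distinct (both $\phi$ and the passage to graphs being injective). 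For (iii), if $\eta<\xi$ and $\beta_0$ bounds the agreement set, then the intersection of the two graphs is contained in $\{(\alpha,f_\xi(\alpha))\colon\alpha<\beta_0\}$; by the smallness estimate $f_\xi(\alpha)\leq\abs{\alpha}+1\leq\abs{\beta_0}+1$ there, so this intersection lies in $\delta\times\delta$ with $\delta=\max(\beta_0,\abs{\beta_0}+2)<\Gamma$. Applying $\phi$ and its bounded-square property gives $A_\xi\cap A_\eta\subseteq\phi[\delta\times\delta]$, a bounded subset of $\Gamma$, which is exactly (iii).

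The main obstacle, and the point on which the whole argument turns, is to force the agreement region of two graphs to be bounded not merely in the first coordinate but \emph{after} flattening by $\phi$: a generic eventually-different family fails this, because the second coordinates $f_\xi(\alpha)$ over the (bounded) agreement set could still be cofinal in $\Gamma$ when $\Gamma$ is singular. The device that removes this difficulty is precisely the ``least-omitted-value'' recipe in the recursion, which forces $f_\xi(\alpha)\leq\abs{\alpha}+1$ and thereby confines the entire agreement region to a small square $\delta\times\delta$; combined with the bounded-square property of the G\"odel pairing, this works verbatim for regular, singular, successor and limit $\Gamma$ alike, so no case distinction on $\cf(\Gamma)$ is needed. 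The only routine verifications that remain are the existence of the minimum at each step (a cardinality count) and the standard order-type facts about the canonical pairing function.
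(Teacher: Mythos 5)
Your overall architecture --- a family of $\Gamma^+$ pairwise eventually different functions $f_\xi\colon\Gamma\to\Gamma$, graphs flattened by the G\"odel pairing, whose bounded-square property converts bounded agreement into condition (iii) --- is sensible, and for the record the paper offers no proof to compare against: it quotes the proposition from Rosenthal's 1970 paper. But the step you yourself single out as ``the crucial extra gain'' contains a genuine error. The justification of the smallness estimate $f_\xi(\alpha)\leq|\alpha|+1$ is a finite-counting argument that fails for infinite $\alpha$: in cardinal arithmetic the set of ordinals $\leq|\alpha|+1$ has exactly $|\alpha|+2=|\alpha|$ elements, while the forbidden set $\{g_\beta(\alpha)\colon\beta\leq\alpha\}$ may have exactly $|\alpha+1|=|\alpha|$ elements, so it can perfectly well cover that entire initial segment. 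This is not just a defect of the write-up: if at each stage the surjection is chosen to list all predecessors among $\{g_\beta\colon\beta\leq\alpha\}$ (possible whenever $|\xi|\leq|\alpha+1|$), an easy induction gives $f_\xi(\alpha)=\xi$, and $\xi$ may be an ordinal such as $|\alpha|\cdot 2$, far above $|\alpha|+1$. What the recipe does guarantee is only the weaker bound $f_\xi(\alpha)<|\alpha+1|^+$: every ordinal below the minimum lies in the forbidden set, so the minimum, viewed as a set of ordinals, injects into it.

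With the corrected bound, your final step no longer goes through uniformly: for $\alpha<\beta_0$ one only gets $f_\xi(\alpha)<|\beta_0|^+$, and when $\Gamma$ is a successor cardinal one may have $|\beta_0|^+=\Gamma$, so the agreement region is confined only to $\beta_0\times\Gamma$ rather than to a small square. The argument is repairable, but precisely by the case distinction you claim to avoid. If $\Gamma$ is a limit cardinal (in particular, whenever $\Gamma$ is singular, since successor cardinals are regular), then $|\beta_0|^+<\Gamma$ and your reasoning works verbatim with the weaker bound. If $\Gamma$ is a successor cardinal, it is regular, and then the at most $|\beta_0|$ second coordinates occurring over the agreement set are bounded below $\Gamma$ simply by regularity --- no smallness estimate is needed in that case at all. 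So the proposition does follow along your lines, but the uniform, cofinality-free argument you describe is not what you have proved: the regular/limit dichotomy re-enters exactly at the point where the false estimate was invoked.
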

Decreasing the family $\Ra$, if necessary, we may assume that $\abs{\Ra}=\Gamma^+$. Let $(A_\alpha)_{\alpha\in\Gamma^+}$ be a transfinite sequence of all the elements from $\Ra$. We define a~{\it generalised Johnson--Lindenstrauss space}, $\JL_\infty(\Gamma)$, as the completion (in $\ell_\infty(\Gamma)$) of the space $$\span\bigl(m_0(\Gamma)\cup\bigl\{\ind_{A_\alpha}\colon\alpha\in\Gamma^+\bigr\}\bigr).$$

Clearly, $m_0(\Gamma)$ embeds isometrically into $\JL_\infty(\Gamma)$. Moreover, for every finite linear combination $x=\sum_{j=1}^ka_j\ind_{A_{\alpha_j}}$ we may find a~sequence $x_0\in m_0(\Gamma)$ with $$\n{x_0+x}_{\JL_\infty(\Gamma)}=\max_{1\leq j\leq k}\abs{a_j}.$$This follows from the fact that for each pair of distinct $i,j\in\{1,\ldots ,k\}$ the intersection $A_{\alpha_i}\cap A_{\alpha_j}$ has cardinality less than $\Gamma$ and thus we may choose an element from $m_0(\Gamma)$ with a support contained in this intersection and which annihilates every its coordinate. Repeating this procedure finitely many times we arrive at the required $x_0\in m_0(\Gamma)$. This means that $\JL_\infty(\Gamma)/m_0(\Gamma)\simeq c_0(\Gamma^+)$, and thus we have an exact sequence 
\begin{equation}\label{eJL}
\ex{m_0(\Gamma)}{\JL_\infty(\Gamma)}{c_0(\Gamma^+)}.
\end{equation}

Now, we claim that the sequence above does not split. Observe that each valuation functional $\JL_\infty(\Gamma)\ni x\mapsto x(\gamma)$ (for $\gamma\in\Gamma$) is continuous and hence the family of all such functionals forms a~total set of cardinality $\Gamma$ in $\JL_\infty(\Gamma)^\ast$. This implies that the density character of the \ws\ topology on $\JL_\infty(\Gamma)^\ast$ is at most $\Gamma$. However, $c_0(\Gamma^+)^\ast\simeq\ell_1(\Gamma^+)$ and every subset $B\subset\ell_1(\Gamma^+)$ with $\abs{B}\leq\Gamma$ has an entire support with cardinality not exceeding $\Gamma$, so it cannot be a~total set. This shows that the density character of the \ws\ topology on $c_0(\Gamma^+)^\ast$ is larger than $\Gamma$ and, consequently, $c_0(\Gamma^+)$ cannot be isomorphic to a~subspace of $\JL_\infty(\Gamma)$, which would be the case if our exact sequence split. 

Consequently, the diagram \eqref{eJL} shows that $\Ext(c_0(\Gamma^+),m_0(\Gamma))\not=0$, and hence, by Theorem \ref{T2}(ii), $m_0(\Gamma)$ does not have the $\Gamma^{++}$-$\SVM$ property. In other words, for every infinite cardinal number $\Gamma$ we have $$\tau(m_0(\Gamma))\leq\Gamma^{++},$$which, jointly with Corollary \ref{cfHasanov}, gives the following result:
\begin{theorem}\label{tau(m0)}
For every infinite cardinal $\Gamma$ we have $$\cf(\Gamma)^{++}\leq\tau(m_0(\Gamma))\leq\Gamma^{++}.$$
\end{theorem}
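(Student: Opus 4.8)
The plan is to prove the two inequalities separately, since each has already been reduced to machinery developed above; the theorem is essentially the act of assembling them.

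For the lower bound I would invoke Corollary \ref{cfHasanov} directly: it asserts that $m_0(\Gamma)$ enjoys the $\cf(\Gamma)^+$-$\SVM$ property, equivalently $\tau(m_0(\Gamma)) > \cf(\Gamma)^+$. Because the $\SVM$ character is by definition a cardinal number and $\cf(\Gamma)^{++}$ is precisely the cardinal successor of $\cf(\Gamma)^+$, the strict inequality $\tau(m_0(\Gamma)) > \cf(\Gamma)^+$ upgrades at once to $\tau(m_0(\Gamma)) \geq \cf(\Gamma)^{++}$. No further work is needed at the bottom end.

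For the upper bound the strategy is to produce a failure of stability on a set algebra of size $\Gamma^+$, and the cleanest route is through the $\mathsf{3SP}$ dictionary of Theorem \ref{T2}(ii). Concretely, I would take Rosenthal's family from Proposition \ref{rosenthal} and form the generalised Johnson--Lindenstrauss space $\JL_\infty(\Gamma)$ inside $\ell_\infty(\Gamma)$ as the closed span of $m_0(\Gamma)$ together with the indicators $\ind_{A_\alpha}$, $\alpha\in\Gamma^+$. The intersection condition (iii) of Proposition \ref{rosenthal} forces the quotient $\JL_\infty(\Gamma)/m_0(\Gamma)$ to be isometric to $c_0(\Gamma^+)$, giving the exact sequence \eqref{eJL}. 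The decisive point is that this sequence does not split, so that $\Ext(c_0(\Gamma^+),m_0(\Gamma))\neq 0$; then Theorem \ref{T2}(ii), read with $\Gamma^+$ in place of $\Gamma$, shows that if $m_0(\Gamma)$ possessed the $\Gamma^{++}$-$\SVM$ property the pair $(c_0(\Gamma^+),m_0(\Gamma))$ would split, and splitting of the pair forces $\Ext=0$, a contradiction. Hence $\tau(m_0(\Gamma))\leq\Gamma^{++}$.

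The crux, and the one genuinely non-formal step, is the non-splitting of \eqref{eJL}; I would settle it by a cardinality obstruction on weak$^\ast$ densities of duals. Each coordinate evaluation $x\mapsto x(\gamma)$ is weak$^\ast$-continuous on $\JL_\infty(\Gamma)^\ast$, and these $\Gamma$-many functionals are total, so the weak$^\ast$ density character of $\JL_\infty(\Gamma)^\ast$ is at most $\Gamma$. By contrast, in $c_0(\Gamma^+)^\ast\simeq\ell_1(\Gamma^+)$ any family of at most $\Gamma$ vectors has combined support of size at most $\Gamma<\Gamma^+$ and hence cannot be total, so the weak$^\ast$ density character there strictly exceeds $\Gamma$. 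This rules out an isomorphic embedding of $c_0(\Gamma^+)$ into $\JL_\infty(\Gamma)$, which a splitting of \eqref{eJL} would manufacture. Combining the two ends yields the claimed chain $\cf(\Gamma)^{++}\leq\tau(m_0(\Gamma))\leq\Gamma^{++}$.
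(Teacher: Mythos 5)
Your proposal is correct and follows the paper's own argument essentially verbatim: the lower bound by quoting Corollary \ref{cfHasanov} and upgrading $\tau(m_0(\Gamma))>\cf(\Gamma)^+$ to $\geq\cf(\Gamma)^{++}$, and the upper bound by building $\JL_\infty(\Gamma)$ from Rosenthal's family, extracting the exact sequence \eqref{eJL}, and ruling out splitting via the weak$^\ast$ density character mismatch between $\JL_\infty(\Gamma)^\ast$ (at most $\Gamma$, from the total family of valuation functionals) and $\ell_1(\Gamma^+)$ (exceeding $\Gamma$, by the support-counting argument), before applying Theorem \ref{T2}(ii). This is exactly the paper's proof, so nothing further is needed.
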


Consequently, if $\Gamma$ is a~regular cardinal (that is, $\cf(\Gamma)=\Gamma$), then we have the equality $\tau(m_0(\Gamma))=\Gamma^{++}$. This gives a partial answer to the question posed at the beginning of this section: Every cardinal number, which is a~double successor of a~regular cardinal, is an $\SVM$ character of some Banach space. 

As a corollary from Theorem \ref{tau(m0)} we obtain $\tau(c_0)=\om_2$, which immediately gives the next result.
\begin{corollary}\label{c0}
For every infinite cardinal $\Gamma$ we have $\tau(c_0(\Gamma))=\om_2$. Moreover, $$v(\omega_1,c_0(\Gamma))\leq 16K.$$
\end{corollary}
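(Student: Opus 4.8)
The plan is to establish the two inequalities $\tau(c_0(\Gamma))\leq\omega_2$ and $\tau(c_0(\Gamma))\geq\omega_2$ separately, reading off the quantitative bound $v(\omega_1,c_0(\Gamma))\leq 16K$ from the argument for the lower one.

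For the upper bound I would first specialise Theorem \ref{tau(m0)} to $\Gamma=\om$. Since $\om$ is regular, $\cf(\om)=\om$, so $\cf(\om)^{++}=\om^{++}=\omega_2$ and the two-sided estimate collapses to $\tau(m_0(\om))=\omega_2$; as $m_0(\om)=c_0$, this gives $\tau(c_0)=\omega_2$, i.e.\ $c_0$ fails the $\omega_2$-$\SVM$ property. Now pick any countably infinite $\Gamma_0\subseteq\Gamma$ and note that coordinate restriction exhibits $c_0(\Gamma_0)\simeq c_0$ as a norm-one complemented subspace of $c_0(\Gamma)$. If $c_0(\Gamma)$ had the $\omega_2$-$\SVM$ property, then Proposition \ref{comp} would transfer it to $c_0$, a contradiction; hence $c_0(\Gamma)$ fails it and $\tau(c_0(\Gamma))\leq\omega_2$.

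For the lower bound the key point is to recognise $c_0(\Gamma)$ as a space of the form $c_0(\Gamma,\G)$. Taking $\G$ to be the Fr\'echet (cofinite) filter $\{A\subseteq\Gamma:\abs{\Gamma\setminus A}<\om\}$, the condition $\lim_\G x=0$ says precisely that $\{\gamma:\abs{x(\gamma)}\geq\e\}$ is finite for every $\e>0$, so $c_0(\Gamma,\G)=c_0(\Gamma)$. Being a filter, $\G$ is closed under finite intersections and hence is (trivially) an $\om$-filter, so the Sobczyk--Hasanov Theorem \ref{Hasanov} (with filter parameter $\om$) yields that $c_0(\Gamma)$ is $2$-complemented in every Banach space $Z$ that contains it isometrically and for which $Z/c_0(\Gamma)$ is separable. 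I would then repeat the construction in the proof of Theorem \ref{k_injective} verbatim for an arbitrary set algebra $\F$ of cardinality $<\omega_1$: then $\mathcal{X}_\F$ is separable, the twisted sum built there contains an isometric copy of $c_0(\Gamma)$ with $Z/c_0(\Gamma)\simeq\mathcal{X}_\F$, and one invokes Remark \ref{8cK} with $c=2$ (so that, for $\kappa=\omega_1$, the hypothesis ``$Z/c_0(\Gamma)$ has density character $<\omega_1$'' is exactly separability) in place of the generic $3\lambda$-complementation of Proposition \ref{dens}. This gives the $\omega_1$-$\SVM$ property with $v(\omega_1,c_0(\Gamma))\leq 8\cdot 2\cdot K=16K$, whence $\tau(c_0(\Gamma))\geq\omega_2$; combined with the upper bound, $\tau(c_0(\Gamma))=\omega_2$.

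The point to be careful about --- essentially the only obstacle --- is that $c_0(\Gamma)\neq m_0(\Gamma)$ once $\Gamma$ is uncountable, so Corollary \ref{cfHasanov} cannot be quoted directly; instead one must re-identify $c_0(\Gamma)$ with the correct $c_0(\Gamma,\G)$-space and observe that the $\om$-completeness of the cofinite filter is exactly what matches the separable quotients $\mathcal{X}_\F$ arising from countable algebras. Apart from this bookkeeping the lower-bound argument is a faithful rerun of the proof of Theorem \ref{k_injective}, so no genuinely new estimate is required.
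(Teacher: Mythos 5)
Your proposal is correct, and your upper bound is exactly the paper's: $\tau(c_0)=\omega_2$ from Theorem \ref{tau(m0)}, transferred to $c_0(\Gamma)$ through Proposition \ref{comp}. For the lower bound, however, you take a genuinely different route. The paper never re-runs the twisted-sum machinery for $c_0(\Gamma)$: given a countable algebra $\F$ and a $1$-additive $\nu\colon\F\to c_0(\Gamma)$, it picks for each $A\in\F$ a finite set outside of which $\abs{\nu(A)(\gamma)}<\e$, takes the countable union $\Gamma_0$ of these sets, observes that the subspace of vectors supported on $\Gamma_0$ is isometric to $c_0=m_0(\omega)$, and applies the already-proved Corollary \ref{cfHasanov} there; the resulting measure, extended by zero off $\Gamma_0$, approximates $\nu$ within any $c>16K$ because the error off $\Gamma_0$ is at most $\e$. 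Your argument instead identifies $c_0(\Gamma)$ with $c_0(\Gamma,\G)$ for the Fr\'echet filter $\G$, notes that every filter is trivially an $\omega$-filter, and repeats the proof of Theorem \ref{k_injective} using Theorem \ref{Hasanov} and Remark \ref{8cK} with $c=2$; in effect you prove the analogue of Corollary \ref{cfHasanov} with the co-small filter $\G_\Gamma$ replaced by the Fr\'echet filter, which covers $c_0(\Gamma)$ directly. Your steps all check out: the renormed twisted sum contains $c_0(\Gamma)$ isometrically, $\mathcal{X}_\F$ is separable for countable $\F$, and ``density character $<\omega_1$'' in Remark \ref{8cK} matches ``$\leq\omega$'' in Theorem \ref{Hasanov}. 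As for what each approach buys: yours is more systematic and yields the uniform statement that every $c_0(\Gamma,\G)$ with $\G$ a $\kappa$-filter has the $\kappa^+$-$\SVM$ property with constant $16K$; the paper's is softer, reusing Corollary \ref{cfHasanov} as a black box at the cost of only an elementary localization argument, with no need to revisit the construction in Theorem \ref{k_injective}.
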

\begin{proof}
Since $c_0$ is a complemented subspace of $c_0(\Gamma)$, Proposition \ref{comp} implies that $\tau(c_0(\Gamma))\leq\tau(c_0)=\om_2$. For the reverse inequality suppose $\F$ is a~set algebra with $\abs{\F}\leq\om$ and let $\nu\colon\F\to c_0(\Gamma)$ be a~$1$-additive function. Pick any $\e>0$ and for each $A\in\F$ choose a~finite set $\Gamma(\e,A)\subset\Gamma$ such that $\abs{\nu(A)(\gamma)}<\e$ for $\gamma\in\Gamma\setminus\Gamma(\e,A)$. Let $\Gamma_0=\bigcup_{A\in\F}\Gamma(\e,A)$; it is a~countable set, so the subspace $\{\xi\in c_0(\Gamma):\, \supp(\xi)\subset\Gamma_0\}$ of $c_0(\Gamma)$ is isometrically isomorphic to $c_0$ (or to a~finite-dimensional space $\ell_\infty(\Gamma_0)$, if $\Gamma_0$ is finite). Therefore, for every $c>16K$ Theorem \ref{cfHasanov} produces a~vector measure $\mu\colon\F\to c_0(\Gamma)$ such that $$\supp(\mu(A))\subset\Gamma_0\,\,\,\mbox{ and }\,\,\,\n{\nu(A)(\gamma)-\mu(A)(\gamma)}\leq c\quad\mbox{for every }A\in\F,\,\gamma\in\Gamma_0.$$ Hence, if we let $\e<16K$, then $\n{\nu(A)-\mu(A)}\leq c$ for each $A\in\F$, which proves that $c_0(\Gamma)$ has the $\omega_1$-$\SVM$ property with $v(\om_1,c_0(\Gamma))\leq 16K$.
\end{proof}

Now, we will focus on the Johnson--Lindenstrauss space $$\JL_\infty=\overline{\span}^{\n{\cdot}_\infty}\bigl(c_0\cup\bigl\{\ind_{N_\gamma}\colon\gamma\in\Gamma\bigr\}\bigr),$$
where $\{N_\gamma\}_{\gamma\in\Gamma}$ is an uncountable almost disjoint family of infinite subsets of $\omega$, which will be fixed for the rest of this section. We wish to derive the following result:
\begin{theorem}\label{JLchar}
We have $\tau(\JL_\infty)=\omega_2$ and 
\begin{equation}\label{120K}
v(\omega_1,\JL_\infty)\leq 120K.
\end{equation}
\end{theorem}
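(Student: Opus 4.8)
The plan is to establish the two inequalities $\tau(\JL_\infty)\ge\omega_2$ and $\tau(\JL_\infty)\le\omega_2$ separately, with the lower estimate also delivering the quantitative bound \eqref{120K}.

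For the lower estimate I would first show that $\JL_\infty$ is $(5,\omega_1)$-injective and then appeal directly to Theorem \ref{k_injective}: since $\cf(\omega_1)=\omega_1$ is uncountable, that theorem yields $\tau(\JL_\infty)>\omega_1$ together with $v(\omega_1,\JL_\infty)\le 24\cdot 5\cdot K=120K$, which is precisely \eqref{120K} and forces $\tau(\JL_\infty)\ge\omega_2$. The separable injectivity is extracted from the defining exact sequence $\ex{c_0}{\JL_\infty}{c_0(\Gamma)}$. Writing $q\colon\JL_\infty\to c_0(\Gamma)$ for the quotient map, and given a superspace $Z\supseteq\JL_\infty$ with $Z/\JL_\infty$ separable together with an operator to be extended, one proceeds in three controlled steps: push forward by $q$ and extend the resulting operator into $c_0(\Gamma_0)$ (here $\Gamma_0\subseteq\Gamma$ is countable, so $c_0(\Gamma_0)\simeq c_0$ is separably injective by Theorem \ref{Hasanov}); lift this extension back to $\JL_\infty$ through a bounded linear section of $q$ over the \emph{separable}, hence split, part $q^{-1}(c_0(\Gamma_0))$; and finally absorb the residual $c_0$-valued discrepancy on the original subspace by a last application of the separable injectivity of $c_0$. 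Each of the three steps is governed by the Sobczyk constant $2$ through the `moreover' clause of Proposition \ref{dens}, and arranging this bookkeeping sharply is what yields the value $\lambda=5$ needed for \eqref{120K}.

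For the upper estimate I would invoke Proposition \ref{ext_JL}, which exhibits a non-trivial twisted sum of $\JL_\infty$ and $c_0(\omega_1)$, that is $\Ext(c_0(\omega_1),\JL_\infty)\ne 0$; in particular the pair $(c_0(\omega_1),\JL_\infty)$ does not split. If $\JL_\infty$ had the $\omega_2$-$\SVM$ property then, applying Theorem \ref{T2}(ii) with $\Gamma=\omega_1$ (so that $\Gamma^+=\omega_2$), the pair $(c_0(\omega_1),\JL_\infty)$ would be forced to split, a contradiction. Hence $\JL_\infty$ fails the $\omega_2$-$\SVM$ property and $\tau(\JL_\infty)\le\omega_2$. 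Together with the previous paragraph this gives $\tau(\JL_\infty)=\omega_2$.

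I expect the genuine difficulty to lie entirely in the lower estimate --- not in the abstract fact that $\JL_\infty$ is separably injective (which is essentially built into the exact sequence and Theorem \ref{Hasanov}), but in squeezing the injectivity constant down to $5$, since the naive compounding of the three extension/lifting steps produces a much larger number. An equivalent, more hands-on route that sidesteps the explicit constant is to observe that any $1$-additive $\nu\colon\F\to\JL_\infty$ on a countable algebra is automatically valued in the separable subspace $q^{-1}(c_0(\Gamma_2))$, where $\Gamma_2=\bigcup_{A\in\F}\supp(q\nu(A))$ is countable; this subspace contains $c_0=\ker q$ and has quotient $c_0(\Gamma_2)\simeq c_0$, hence is isomorphic to $c_0$ by Theorem \ref{Hasanov}, so one may transport the bound $v(\omega_1,c_0)\le 16K$ of Corollary \ref{c0} through the isomorphism, again arriving at a bound of the form $cK$ with $c$ controlled by the Sobczyk constant.
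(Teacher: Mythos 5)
Your proof of the qualitative statement $\tau(\JL_\infty)=\omega_2$ is correct, but the quantitative estimate \eqref{120K} is not established. On the upper bound you argue via Proposition \ref{ext_JL} and Theorem \ref{T2}(ii); this is valid (Proposition \ref{ext_JL} is proved independently of the present theorem, so there is no circularity) and is precisely the alternative route the paper itself flags, whereas the paper's own proof instead passes to $\JL_\infty^{(1)}=\JL_\infty\oplus_1\R\ind_\omega\simeq\mathcal{C}(\Omega)$, notes that $\Omega$ is scattered so $\mathcal{C}(\Omega)$ contains a complemented copy of $c_0$, and applies Proposition \ref{comp} together with Corollary \ref{c0}. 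On the lower bound, your three-step extension scheme (or the closing ``hands-on'' reduction of a measure on a countable algebra into $q^{-1}(c_0(\Gamma_2))\simeq c_0$) does prove that $\JL_\infty$ is $(\lambda,\omega_1)$-injective for \emph{some} finite $\lambda$, and Theorem \ref{k_injective} then yields $\tau(\JL_\infty)\geq\omega_2$; qualitatively this parallels the paper, which instead quotes that separable injectivity is a $\mathsf{3SP}$ property.

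The gap is the constant: \eqref{120K} requires $\lambda=5$, and you never obtain it. Your own accounting exposes the problem. Extending $q\circ t$ costs a factor $2$ (Sobczyk); the section of $q$ over $q^{-1}(c_0(\Gamma_0))$ costs $3$ (identity minus a norm-$2$ projection applied to a lifting); and the final Sobczyk step doubles a discrepancy whose norm already carries all the previous factors. These constants compound \emph{multiplicatively}, and even an optimized pull-back version of the same scheme lands at best around $\lambda=10$, not $5$. The sentence ``arranging this bookkeeping sharply is what yields the value $\lambda=5$'' is exactly the missing mathematics, and it cannot be supplied by Sobczyk's theorem alone, since every correction step in your scheme multiplies, rather than adds to, the accumulated norm. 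The paper gets $5$ from a different source altogether: $\JL_\infty^{(1)}\simeq\mathcal{C}(\Omega)$ with $\Omega''=\{\infty\}$, so $\Omega$ has Cantor--Bendixson height $3$, and by the result of \cite{aviles} quoted just before Corollary \ref{height}, such a $\mathcal{C}(\Omega)$ is $(2\cdot 3-1)$-separably injective; Corollary \ref{height} then gives $v(\omega_1,\mathcal{C}(\Omega))\leq 24\cdot 5\cdot K=120K$, which transfers to $\JL_\infty$ because it is $1$-complemented in $\JL_\infty^{(1)}$. Your final alternative route has the same defect in sharper form: transporting $v(\omega_1,c_0)\leq 16K$ through the isomorphism $q^{-1}(c_0(\Gamma_2))\simeq c_0$ costs the Banach--Mazur distance of that isomorphism (roughly $8$ from the Sobczyk bookkeeping), so it proves a bound of the form $cK$ --- enough for $\tau(\JL_\infty)\geq\omega_2$ --- but not the stated bound $120K$.
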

\begin{proof}
It is well-known that $\JL_\infty$ is separably injective, since it is a~(non-trivial) twisted sum of two separably injective Banach spaces: $c_0$ and $c_0(\Gamma)$ (cf. \cite{johnson_lindenstrauss} and \cite{yost (JL space)}), whereas `being separably injective' is a~$3$SP property (cf. \cite{aviles}). Hence, Theorem \ref{k_injective} implies that $\tau(\JL_\infty)\geq\omega_2$. To get an~upper estimate for $v(\omega_1,\JL_\infty)$ we shall take a~look at the spectral representation of the commutative unital Banach algebra $$\JL_\infty^{(1)}=\JL_\infty\oplus_1\R\ind_\omega\quad\mbox{(the }\ell_1\mbox{-sum)}.$$We may safely replace $\JL_\infty$ by $\JL_\infty^{(1)}$ in our assertion, since the former is $1$-complemented in the latter.

As it is explained in \cite{yost (JL space)}, $\JL_\infty^{(1)}$ is isometrically isomorphic to $\mathcal{C}(\Omega)$, where $$\Omega=\omega\cup\Gamma\cup\{\infty\}$$ is the one-point compactification of the set $\omega\cup\Gamma$ (the disjoint union) topologised as follows:
\begin{itemize*}
\item each point of $\omega$ is a~discrete point;
\item each point $\gamma\in\Gamma$ has a~neighbourhood basis consisting of all the sets $U\subset\omega\cup\Gamma$ for which $\abs{N_\gamma\setminus U}<\omega$.
\end{itemize*}
Evidently, $\Omega^{\prime\prime}=\{\infty\}$, thus the height of $\Omega$ equals $3$ and so inequality \eqref{120K} follows from Corollary \ref{height}.

To complete the proof we shall only show that $\tau(\JL_\infty^{(1)})\leq\omega_2$. To this end we may simply observe that $\Omega$ is scattered, which guarantees that the space $\mathcal{C}(\Omega)$ contains an~isomorphic copy of $c_0$ that is complemented in $\mathcal{C}(\Omega)$ (see \cite[Theorem 14.26]{fabian}). Thus, it remains to appeal to Proposition \ref{comp} and Corollary \ref{c0}.
\end{proof}

There is, however, another way of showing that $\tau(\JL_\infty)\leq\omega_2$, without appealing to the existence of a~complemented copy of $c_0$ inside $\JL_\infty$, but making use of Theorem \ref{T2}(ii). This job will be done by constructing a~non-trivial twisted sum of $\JL_\infty$ and $c_0(\omega_1)$. The idea is taken from an~old construction, due to Foia\c{s} and Singer \cite{foias_singer}, of a~non-trivial twisted sum $\mathcal{D}$ of $\mathcal{C}[0,1]$ and $c_0$. The space $\mathcal{D}$ consists of all real functions on $[0,1]$ that are continuous at every point outside some prescribed countable dense set $Q\subset [0,1]$, and left continuous with right limits at every point from $Q$. The supremum norm makes $\mathcal{D}$ a~Banach space containing $\mathcal{C}[0,1]$ isometrically, so that $\mathcal{D}/\mathcal{C}[0,1]\simeq c_0$. However, the quotient map $\pi\colon\mathcal{D}\to\mathcal{D}/\mathcal{C}[0,1]$ not only does not admit any lifting in the class of operators $c_0\to\mathcal{D}$, but even does not admit any uniformly continuous, not necessarily linear, lifting (see \cite[Example 1.20]{benyamini_lindenstrauss}).
\begin{proposition}\label{ext_JL}
$\Ext(c_0(\omega_1),\JL_\infty)\not=0$.
\end{proposition}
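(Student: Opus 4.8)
The plan is to imitate the Foia\c{s}--Singer construction directly in the setting of $\JL_\infty$, replacing the quotient $c_0$ by $c_0(\omega_1)$. The essential feature I want to exploit is that $\JL_\infty$ is, by its very definition, built from an almost disjoint family $\{N_\gamma\}_{\gamma\in\Gamma}$ of infinite subsets of $\omega$, where $\abs{\Gamma}=\mathfrak{c}\geq\omega_1$. So I would fix an uncountable subfamily indexed by $\omega_1$, say $\{N_\xi\}_{\xi<\omega_1}$, and attempt to build a Banach space $\mathcal{D}$ containing $\JL_\infty$ isometrically with $\mathcal{D}/\JL_\infty\simeq c_0(\omega_1)$, in such a way that the quotient map admits no lifting. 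Concretely, I would realise $\JL_\infty$ as a subalgebra of $\ell_\infty(\omega)$ (or, passing to $\JL_\infty^{(1)}$, as $\mathcal{C}(\Omega)$ with $\Omega=\omega\cup\Gamma\cup\{\infty\}$ as in the proof of Theorem~\ref{JLchar}) and adjoin, for each $\xi<\omega_1$, a new function that ``sees'' the set $N_\xi$ in a discontinuous way at the point $\gamma=\xi$ of $\Omega$, mimicking how the Foia\c{s}--Singer functions jump at points of the dense set $Q$.

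First I would make the construction precise on the topological model. Recall that in $\Omega=\omega\cup\Gamma\cup\{\infty\}$ each point $\xi\in\Gamma$ has a neighbourhood basis given by cofinite-in-$N_\xi$ subsets, so a function continuous on $\Omega$ must have, at $\xi$, the limit of its values along $N_\xi$. I would let $\mathcal{D}$ be the space of bounded functions on $\Omega\setminus\{\xi:\xi<\omega_1\}$ together with a prescribed ``left/right'' jump datum at each $\xi<\omega_1$: more carefully, for each $\xi$ split the limit structure at $\xi$ into two one-sided limits along a fixed partition of $N_\xi$ into two infinite pieces $N_\xi^-,N_\xi^+$, and require functions in $\mathcal{D}$ to have both one-sided limits at each $\xi$, being ``continuous from one side''. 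Equipped with the sup norm, $\mathcal{D}$ is a Banach space containing $\mathcal{C}(\Omega)\cong\JL_\infty^{(1)}$ isometrically (the genuinely continuous functions), and the quotient $\mathcal{D}/\mathcal{C}(\Omega)$ is measured by the family of jumps $\bigl(f(\xi^+)-f(\xi^-)\bigr)_{\xi<\omega_1}$; almost disjointness of the $N_\xi$ forces these jumps to tend to zero along $\omega_1$, which should yield $\mathcal{D}/\JL_\infty\simeq c_0(\omega_1)$. This gives an exact sequence $\ex{\JL_\infty}{\mathcal{D}}{c_0(\omega_1)}$.

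Next I would show the sequence does not split. The cleanest route is the one indicated in the remark preceding the statement: show that the quotient map $\pi\colon\mathcal{D}\to c_0(\omega_1)$ admits no continuous linear section, equivalently that $\JL_\infty$ is not complemented in $\mathcal{D}$, by producing a map that cannot even be lifted uniformly continuously. A section $s$ would have to assign, to each unit vector $e_\xi$ of $c_0(\omega_1)$, a function in $\mathcal{D}$ realising a unit jump at $\xi$ and no jump elsewhere; the obstruction is that the supports $N_\xi$ overlap on finite, but nonempty, sets, and a linear (or uniformly continuous) choice of such ``jump functions'' cannot be made consistently across uncountably many indices while staying bounded, exactly as in \cite[Example 1.20]{benyamini_lindenstrauss}. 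I would make this rigorous by assuming a bounded linear section exists and deriving a contradiction from an incompatibility of the jumps on a suitable countable subfamily, reducing to the classical Foia\c{s}--Singer obstruction on a copy of the original $\JL_\infty$ sitting inside.

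The hard part will be the two points where the uncountable index set genuinely differs from the classical countable case: verifying that the quotient is \emph{exactly} $c_0(\omega_1)$ rather than some larger $c_0(\kappa)$ or an $\ell_\infty$-type space (this hinges on almost disjointness forcing the jump family to be finitely supported modulo small errors, i.e. to lie in $c_0(\omega_1)$), and then transporting the classical non-lifting argument to the transfinite setting. For the latter I expect the key technical step to be an ``incompatibility'' lemma: given any candidate bounded section, one extracts a countable set of indices on which the induced jump functions behave like the Foia\c{s}--Singer ones, so that non-liftability for $c_0$ (which is already encoded in the structure of $\JL_\infty$ itself) propagates to a contradiction for $c_0(\omega_1)$. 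Once the sequence is shown to be non-trivial, $\Ext(c_0(\omega_1),\JL_\infty)\neq 0$ follows at once, and, combined with Theorem~\ref{T2}(ii), this re-proves $\tau(\JL_\infty)\leq\omega_2$ without invoking a complemented copy of $c_0$.
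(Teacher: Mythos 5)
Your construction of the twisted sum is exactly the one in the paper: split each $N_\gamma$ into two infinite halves $P_\gamma\cup Q_\gamma$, and let the middle space $\mathcal{Z}$ consist of the bounded functions on $\Omega$ that are continuous at $\infty$, whose value at each $\gamma$ is the limit along $P_\gamma$, and which have a (possibly different) limit along $Q_\gamma$; the jump map then identifies $\mathcal{Z}/\mathcal{C}(\Omega)$, with $\mathcal{C}(\Omega)\simeq\JL_\infty^{(1)}$, isometrically with $c_0(\omega_1)$. Your worry about the quotient being exactly $c_0(\omega_1)$ is settled not by almost disjointness per se but by continuity at $\infty$: every neighbourhood of $\infty$ contains all but finitely many $\gamma\in\Gamma$, so at most finitely many jumps can exceed a given $\e>0$. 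Up to this point your proposal and the paper agree.

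The genuine gap is in the non-splitting argument. Your plan --- assume a bounded linear section exists, extract a countable subfamily of indices, and reduce to ``the classical Foia\c{s}--Singer obstruction'' --- cannot work, because $\JL_\infty$ (equivalently $\JL_\infty^{(1)}$) is \emph{separably injective}, as the paper itself uses in the proof of Theorem \ref{JLchar}. Separable injectivity gives $\Ext(Z,\JL_\infty)=0$ for every separable $Z$, so the part of your exact sequence sitting over any separable (in particular, countably generated) subspace of $c_0(\omega_1)$ splits automatically, and no Foia\c{s}--Singer-type obstruction can survive a countable reduction; non-splitting here is an irreducibly uncountable phenomenon. (Moreover, the ``non-liftability for $c_0$ encoded in the structure of $\JL_\infty$'' is the non-triviality of $\ex{c_0}{\JL_\infty}{c_0(\Gamma)}$, i.e. $\Ext(c_0(\Gamma),c_0)\not=0$, which says nothing about liftings \emph{into} $\JL_\infty$.) The paper replaces this step by a global cardinality argument: each $f\in\mathcal{Z}$ induces continuous functions $f_P$, $f_Q$ on the two JL-type compacta built from $\{P_\gamma\}$ and $\{Q_\gamma\}$, yielding an isometric embedding of $\mathcal{Z}$ into $\JL_{\infty,P}^{(1)}\oplus_{\infty}\JL_{\infty,Q}^{(1)}$, hence into $\ell_\infty$; therefore $\mathcal{Z}$ has $w^\ast$-separable dual, whereas $c_0(\omega_1)^\ast\simeq\ell_1(\omega_1)$ is not $w^\ast$-separable, so $c_0(\omega_1)$ embeds into no subspace of $\mathcal{Z}$ and the sequence cannot split. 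Your proof needs this (or some other genuinely uncountable) argument in place of the countable-reduction step.
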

\begin{proof}
We may again work with $\JL_\infty^{(1)}$ instead of $\JL_\infty$, because for any Banach spaces $X$, $Y_1$ and $Y_2$ we have: $\Ext(X,Y_1\oplus Y_2)\not=0$ if and only if $\Ext(X,Y_i)\not=0$ for some $i\in\{1,2\}$ (see \cite[Lemma 4]{sanchez_castillo (duality)}). 

We may also assume that $\abs{\Gamma}=\omega_1$, because even if the given almost disjoint family $\{N_\gamma\}_{\gamma\in\Gamma}$ has cardinality (consistently) greater than $\omega_1$, we may carry out the construction below for any its subfamily with cardinality $\omega_1$.

For every $\gamma\in\Gamma$ write $N_\gamma=P_\gamma\cup Q_\gamma$, where $P_\gamma$ and $Q_\gamma$ are disjoint and infinite. The set $P_\gamma$ will play a~role of the `left side' of $\gamma$, while $Q_\gamma$ will be the `right side' of $\gamma$. Let $\Omega=\omega\cup\Gamma\cup\{\infty\}$ be as described above. Define $\mathcal{Z}$ to be the space of all the functions $f\colon\Omega\to\R$ such that:
\begin{itemize*}
\item $f$ is bounded;
\item $f$ is continuous at the point $\infty$;
\item at each point $\gamma\in\Gamma$ the `left limit' $\lim_{n\in P_\gamma}f(n)$ exists and equals $f(\gamma)$ ({\it i.e.} for every $\e>0$ the set $\{n\in P_\gamma\colon\abs{f(n)-f(\gamma)}\geq\e\}$ is finite);
\item at each point $\gamma\in\Gamma$ the `right limit' $\lim_{n\in Q_\gamma}f(n)$ exists, but may be possibly different from $f(\gamma)$ ({\it i.e.} there is $\lambda\in\R$ such that for every $\e>0$ the set $\{n\in Q_\gamma\colon\abs{f(n)-\lambda}\geq\e\}$ is finite).
\end{itemize*}
Let us write $f(\gamma-)$ instead of $\lim_{n\in P_\gamma}f(n)$ ($=f(\gamma)$) and $f(\gamma+)$ instead of $\lim_{n\in Q_\gamma}f(n)$, for any $f\in\mathcal{Z}$ and any $\gamma\in\Gamma$.

The supremum norm makes $\mathcal{Z}$ a~Banach space that contains $\mathcal{C}(\Omega)\simeq\JL_\infty^{(1)}$ isometrically. Now, consider the `jump function' $J$ defined by $$\mathcal{Z}\ni f\xmapsto[\phantom{xxxxxx}]{J}\left(\frac{1}{2}\bigl(f(\gamma+)-f(\gamma-)\bigr)\colon\gamma\in\Gamma\right).$$
{\it Claim 1. }$Jf\in c_0(\Gamma)$ for every $f\in\mathcal{Z}$.

\vspace*{1mm}\noindent
{\it Proof of Claim 1. }Every open neighbourhood of $\infty$ is of the form $V\cup\{\infty\}$, where $V$ is open in $\omega\cup\Gamma$ and has a~compact complement. It is also easily seen that a~set $S\subset\omega\cup\Gamma$ is compact if and only if:
\begin{itemize*}
\item $S\cap\Gamma$ is finite, say $\{\gamma_1,\ldots,\gamma_k\}$; 
\item $S$ contains only finitely many points outside the set $\omega\setminus\bigcup_{j=1}^kN_{\gamma_j}$.
\end{itemize*}
In particular, every open neighborhood of $\{\infty\}$ contains all but finitely many $\gamma\in\Gamma$, and for all but finitely many $\gamma$'s it must contain almost all elements from $N_\gamma$.

Now, fix any $\e>0$. If we had $\abs{e_\gamma^\ast Jf}>\e$ for infinitely many $\gamma$'s from $\Gamma$, then the remarks above would imply that the oscillation of $f$ is at least $2\e$ on every open neighbourhood of $\{\infty\}$. This is, however, impossible as $f$ is continuous at $\infty$. {\it Claim 1} has been proved.

\vspace*{2mm}\noindent
{\it Claim 2. }The map $J\colon\mathcal{Z}\to c_0(\Gamma)$ is surjective and 
\begin{equation}\label{distJL}
\n{Jf}_\infty=\dist(f,\mathcal{C}(\Omega))\quad\mbox{for every }f\in\mathcal{Z}.
\end{equation}
Consequently, the quotient $\mathcal{Z}/\mathcal{C}(\Omega)$ is isometrically isomorphic to $c_0(\Gamma)$ (so, also to $c_0(\omega_1)$), hence we have the exact sequence: 
\begin{equation}\label{exiJL}
\exi{}{J}{\mathcal{C}(\Omega)}{\mathcal{Z}}{c_0(\Gamma)}.
\end{equation}

\vspace*{1mm}\noindent
{\it Proof of Claim 2. }Let us start with showing \eqref{distJL}. Of course, we may suppose $f\not\in\mathcal{C}(\Omega)$. The inequality `$\leq$' is evident. For the reverse inequality pick any positive $\e<\dist(f,\mathcal{C}(\Omega))$ and an~open neighbourhood $U$ of $\infty$ on which the oscillation of $f$ is less than $\e$. Then, the complement $\Omega\setminus U$ contains only finitely many elements from $\Gamma$, say $\gamma_1,\ldots ,\gamma_k$. By decreasing $U$, if necessary, we may assume that $\bigcup_{j=1}^kN_{\gamma_j}\subset\Omega\setminus U$, which makes $U$ a~clopen subset of $\Omega$. The set $\Omega\setminus U$ may still contain some natural numbers outside $\bigcup_{j=1}^kN_{\gamma_j}$, but only finitely many of them. Denote $M$ the set of all these numbers. Let also $F=\bigcup_{1\leq i<j\leq k}(N_{\gamma_i}\cap N_{\gamma_j})$ which is a~finite set. Then we may write $\Omega\setminus U$ as the disjoint union: $$\Omega\setminus U=\{\gamma_1,\ldots ,\gamma_k\}\cup\bigcup_{j=1}^k(N_{\gamma_j}\setminus F)\cup F\cup M.$$For each $1\leq j\leq k$ pick a~finite set $S_j\subset N_{\gamma_j}\setminus F$ such that
\begin{equation}\label{eJL}
\bigl|f(n)-f(\gamma_j-)\bigr|<\e\,\,\,\mbox{ or }\,\,\,\bigl|f(n)-f(\gamma_j+)\bigr|<\e\quad\mbox{for each }n\in N_{\gamma_j}\setminus (F\cup S_j).
\end{equation}

Now, define a function $g\colon\Omega\to\R$ as follows:
\begin{itemize*}
\item $g(\gamma_j)=f(\gamma_j)+e_{\gamma_j}^\ast Jf$ for each $1\leq j\leq k$;
\item $g(n)=g(\gamma_j)$ for each $n\in N_{\gamma_j}\setminus (F\cup S_j)$;
\item $g(n)=f(n)$ for each $n\in\bigcup_{j=1}^kS_j\cup F\cup M$;
\item $g(x)=f(\infty)$ for each $x\in U$.
\end{itemize*}
Since $U$ is clopen, $g$ is a continuous function, $g\in\mathcal{C}(\Omega)$. Note also that:
\begin{itemize*}
\item $\abs{g(\gamma_j)-f(\gamma_j)}=\abs{e_{\gamma_j}^\ast Jf}\leq\n{Jf}_\infty$ for each $1\leq j\leq k$;
\item $g(n)=g(\gamma_j)=f(\gamma_j)+e_{\gamma_j}^\ast Jf=\frac{1}{2}\bigl(f(\gamma_j-)+f(\gamma_j+)\bigr)$ for each $n\in N_{\gamma_j}\setminus(F\cup S_{\gamma_j})$, so \eqref{eJL} implies that for all such $n$'s we have $\abs{g(n)-f(n)}<\e+\abs{e_{\gamma_j}^\ast Jf}\leq\e+\n{Jf}_\infty$;
\item $\abs{g(n)-f(n)}=0$ for each $n\in\bigcup_{j=1}^kS_j\cup F\cup M$;
\item $\abs{g(x)-f(x)}=\abs{f(\infty)-f(x)}<\e$ for every $x\in U$.
\end{itemize*}
This shows that $\n{g-f}_\infty<\e+\n{Jf}_\infty$ and, as $\e$ could be arbitrarily small, the inequality `$\geq$' of \eqref{distJL} follows. In particular, $J$ is a~bounded operator.

Let $\Phi\colon\mathcal{Z}/\mathcal{C}(\Omega)\to c_0(\Gamma)$ be the linear map satisfying $\Phi\pi=J$, where $\pi\colon\mathcal{Z}\to\mathcal{Z}/\mathcal{C}(\Omega)$ is the canonical projection. Plainly, each vector $e_\gamma$ from the canonical Schauder basis $(e_\gamma)_{\gamma\in\Gamma}$ of $c_0(\Gamma)$ belongs to the range of $J$, so it belongs also to the range of $\Phi$, which is therefore dense in $c_0(\Gamma)$. Equality \eqref{distJL} says that $\Phi$ is an~isometry, thus $\Phi$ is in fact surjective, and hence so is $J$. {\it Claim 2} has been proved.

\vspace*{1mm}
Now, in order to show that the exact sequence \eqref{exiJL} does not split, observe that the Banach space $\mathcal{Z}$ embeds isometrically into a~direct sum of two Johnson--Lindenstrauss type spaces determined by the two almost disjoint families: $\{P_\gamma\}_{\gamma\in\Gamma}$ and $\{Q_\gamma\}_{\gamma\in\Gamma}$. More precisely, let
$$
\JL_{\infty,P}=\overline{\span}^{\n{\cdot}_\infty}\bigl(c_0\cup\bigl\{\ind_{P_\gamma}\colon\gamma\in\Gamma\bigr\}\bigr)
\quad\mbox{and}\quad \JL_{\infty,Q}=\overline{\span}^{\n{\cdot}_\infty}\bigl(c_0\cup\bigl\{\ind_{Q_\gamma}\colon\gamma\in\Gamma\bigr\}\bigr)
$$
and
$$
\JL_{\infty,P}^{(1)}=\JL_{\infty,P}\oplus_1\R\ind_\omega\quad\mbox{and}\quad\JL_{\infty,Q}^{(1)}=\JL_{\infty,Q}\oplus_1\R\ind_\omega.
$$
Let also $\Omega_P$ and $\Omega_Q$ be the spectra of these two commutative unital Banach algebras (they are equal to $\Omega$ as sets); their topologies may be described as the one on $\Omega$, just replacing $\{N_\gamma\}_{\gamma\in\Gamma}$ by $\{P_\gamma\}_{\gamma\in\Gamma}$ and $\{Q_\gamma\}_{\gamma\in\Gamma}$, respectively. Each function $f\in\mathcal{Z}$ induces in a~natural way two functions $f_P\in\mathcal{C}(\Omega_P)$ and $f_Q\in\mathcal{C}(\Omega_Q)$, defined as follows:
\begin{itemize*}
\item $f_P(\infty)=f(\infty)=f_Q(\infty)$;
\item $f_P(\gamma)=f(\gamma-)=f(\gamma)$ and $f_Q(\gamma)=f(\gamma+)$ for each $\gamma\in\Gamma$;
\item $f_P(n)=f(n)$ for $n\in\bigcup_{\gamma\in\Gamma}P_\gamma$ and $f_Q(n)=f(n)$ for $n\in\bigcup_{\gamma\in\Gamma}Q_\gamma$;
\item $f_P(n)=0$ for $n\in\omega\setminus\bigcup_{\gamma\in\Gamma}P_\gamma$ and $f_Q(n)=0$ for $n\in\omega\setminus\bigcup_{\gamma\in\Gamma}Q_\gamma$.
\end{itemize*}
Therefore, we have an isometric embedding $j$ and the inclusion embedding $i$ as below:
$$
\mathcal{Z}\ni f\xmapsto[\phantom{xxxxxx}]{j}(f_P,f_Q)\in\mathcal{C}(\Omega_P)\oplus_{\scriptscriptstyle{\infty}}\mathcal{C}(\Omega_Q)\simeq\JL_{\infty,P}^{(1)}\oplus_{\scriptscriptstyle{\infty}}\JL_{\infty,Q}^{(1)}\xhookrightarrow[\phantom{xxxxxx}]{i}\ell_\infty\oplus_{\scriptscriptstyle{\infty}}\ell_\infty\simeq\ell_\infty.
$$

To finish the argument note that the space $\mathcal{Z}$, being isomorphic to a~subspace of $\ell_\infty$, has a~$w^\ast$-separable dual, whereas $c_0(\omega_1)^\ast\simeq\ell_1(\omega_1)$ is not $w^\ast$-separable, as every countable set of functionals has a~countable entire support on $\omega_1$. Consequently, $c_0(\omega_1)$ is not isomorphic to any subspace of $\mathcal{Z}$, thus the exact sequence \eqref{exiJL} does not split and the proof is completed.
\end{proof}
\section{The three-space problem for the $\kappa$-$\SVM$ property}
\noindent
Let $X$ and $Y$ be normed spaces. Following \cite{sanchez_castillo (stability)} we denote $K(X,Y)$ the `approximation constant` for quasi-linear maps from $\Lambda(X,Y)$, {\it i.e.} the infimum of those constants $B\leq\infty$ such that for every $f\in\Lambda(X,Y)$ there exists a~linear map $h\colon X\to Y$ with $\dist(f,h)\leq B\cdot\Delta(f)$. Similarly, let $Z(X,Y)$ be the infimum of those constants $B\leq\infty$ such that for every $f\in\Xi(X,Y)$ there exists a~linear map $h\colon X\to Y$ with $\dist(f,h)\leq B\cdot Z(f)$.

By Theorems \ref{st_Kalton} and \ref{stz_Kalton}, if $X$ and $Y$ are Banach spaces, then $(X,Y)$ splits if and only if $K(X_0,Y)<\infty$ for some (and then, for every) dense subspace $X_0$ of $X$, whereas $\Ext(X,Y)=0$ if and only if $Z(X_0,Y)<\infty$ for some (and then, for every) dense subspace $X_0$ of $X$.

We are aiming for the following theorem:
\begin{theorem}\label{3sp}
If $\kappa$ is a cardinal number with uncountable cofinality, then the $\kappa$-$\SVM$ property is a~$\mathsf{3SP}$ property. Consequently, the $\SVM$ property is a~$\mathsf{3SP}$ property.
\end{theorem}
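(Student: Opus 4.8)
The plan is to recast the $\kappa$-$\SVM$ property in terms of the zero-linear approximation constants $Z(X_\F,\cdot)$ introduced above, and then to run a quantitative version of the standard covariant three-space argument for the functor $\Ext(\mathcal{X}_\F,\cdot)$, keeping every constant independent of the algebra $\F$. First I would record a uniform reformulation: a Banach space $W$ has the $\kappa$-$\SVM$ property if and only if $\beta_\kappa(W):=\sup\{Z(X_\F,W)\colon\F\text{ a set algebra},\ \abs{\F}<\kappa\}<\infty$, with $v(\kappa,W)$ and $\beta_\kappa(W)$ comparable. For the ``only if'' direction, a zero-linear $f\colon X_\F\to W$ makes $A\mapsto f(\ind_A)$ a $2Z(f)$-additive function, so the $\kappa$-$\SVM$ property produces a vector measure $\mu$ whose linear extension $h$ satisfies $\n{(f-h)(\ind_A)}\le 2Z(f)v(\kappa,W)$; since every $x\in X_\F$ admits a representation $x=\sum_i\alpha_i\ind_{A_i}$ with $\sum_i\abs{\alpha_i}\le(1+\e)\n{x}_\F$ and $\n{\ind_{A_i}}_\F\le 1$, zero-linearity of $f-h$ immediately gives $\dist(f-h)\le(1+\e)(1+2v(\kappa,W))Z(f)$, whence $\beta_\kappa(W)<\infty$. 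The ``if'' direction is exactly the mechanism in the proof of Theorem~\ref{k_injective}: from a $1$-additive $\nu\colon\F\to W$ build $f_0\in\Xi(X_\F,W)$ with $Z(f_0)\le 2(1+\e)K$, approximate it by a linear $g$ with $\dist(f_0-g)\le\beta_\kappa(W)Z(f_0)$, and set $\mu(A)=g(\ind_A)$.

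With this reformulation the theorem reduces to proving $\beta_\kappa(Z)<\infty$ whenever $\beta_\kappa(X),\beta_\kappa(Y)<\infty$, for an exact sequence $\ex{Y}{Z}{X}$ with $i\colon Y\to Z$ the inclusion and $q\colon Z\to X$ the quotient map. Fix a set algebra $\F$ with $\abs{\F}<\kappa$ and an arbitrary $\Phi\in\Xi(X_\F,Z)$. I would first push $\Phi$ down: $q\Phi\in\Xi(X_\F,X)$ with $Z(q\Phi)\le\n{q}Z(\Phi)$, so there is a linear $\ell_X\colon X_\F\to X$ with $\dist(q\Phi-\ell_X)\le a$, where $a:=\beta_\kappa(X)\n{q}Z(\Phi)$. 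Choosing algebraically a linear lifting $L\colon X_\F\to Z$ of $\ell_X$ through $q$ and putting $\Psi=\Phi-L$, one has $Z(\Psi)=Z(\Phi)$ and $\n{q\Psi(x)}\le a\n{x}_\F$. Next I would fix a bounded homogeneous section $s\colon X\to Z$ of $q$ with $\n{s(x)}\le\lambda\n{x}$ (its existence for a quotient operator is standard), and set $\rho=s\circ q\Psi$; then $\rho$ is bounded by $\lambda a$, so $\Theta:=\Psi-\rho$ is still zero-linear, with $Z(\Theta)\le Z(\Phi)+2\lambda a$, and satisfies $q\Theta=0$, i.e. $\Theta$ takes values in $i(Y)$. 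Transporting through $i^{-1}$ yields $i^{-1}\Theta\in\Xi(X_\F,Y)$, which by $\beta_\kappa(Y)<\infty$ lies within $\beta_\kappa(Y)\n{i^{-1}}Z(\Theta)$ of a linear map $\ell_Y\colon X_\F\to Y$. Finally $H:=L+i\ell_Y$ is linear and, collecting the estimates,
\[
\dist(\Phi-H)\le\lambda a+\n{i}\,\n{i^{-1}}\beta_\kappa(Y)Z(\Theta)\le C\,Z(\Phi),
\]
where $C$ depends only on $\n{i},\n{i^{-1}},\n{q},\lambda,\beta_\kappa(X),\beta_\kappa(Y)$, and not on $\F$. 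Hence $Z(X_\F,Z)\le C$ for every admissible $\F$, so $\beta_\kappa(Z)\le C<\infty$ and $Z$ has the $\kappa$-$\SVM$ property.

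The main obstacle is exactly this uniformity: the homological three-space property of $\Ext(\mathcal{X}_\F,\cdot)$ in the second variable yields the vanishing $\Ext(\mathcal{X}_\F,Z)=0$ immediately, but the $\kappa$-$\SVM$ property demands a single constant valid for all algebras of size $<\kappa$, so one must follow every constant through the push-down, the linear lifting, the section $s$, and the pull-back into $Y$, and check that none of them secretly depends on $\F$. The uncountable-cofinality hypothesis enters, as in Theorem~\ref{k_injective}, in making the reformulation of the first paragraph available in the uniform form used here. Finally, if $X$ and $Y$ enjoy the full $\SVM$ property then $v(\kappa,X)\le v(X)$ and $v(\kappa,Y)\le v(Y)$ for every $\kappa$, so the constant $C$ becomes independent of $\kappa$; applying the $\kappa$-$\SVM$ case with $\kappa=\abs{\F}^+$ (a regular, hence uncountable-cofinality, cardinal) to an arbitrary algebra $\F$ then bounds, by one and the same constant, the distance from any $1$-additive map $\F\to Z$ to the vector measures, so $Z$ has the $\SVM$ property.
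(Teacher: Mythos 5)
Your proof is correct, but it takes a genuinely different route from the paper's, and in fact it proves more than the statement asks. The paper never makes the three-space argument quantitative: it invokes the purely qualitative homological fact that $\Ext(\mathcal{X}_\F,\cdot)=0$ is a $\mathsf{3SP}$ property to conclude $\Ext(\mathcal{X}_\F,Z)=0$, hence $v(\F,Z)<\infty$, for each algebra $\F$ \emph{separately}, with no control on how this constant depends on $\F$; it then argues by contradiction, choosing algebras $\F_n$ with $\abs{\F_n}<\kappa$ and $v(\F_n,Z)>n$ and amalgamating them into a single subalgebra $\Sigma$ of the simple product $\sprod_n\F_n$ with $\abs{\Sigma}<\kappa$ --- it is precisely to guarantee $\sup_n\abs{\F_n}<\kappa$ that $\cf(\kappa)>\omega$ is invoked --- contradicting $v(\Sigma,Z)<\infty$. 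You instead establish the uniform bound $Z(X_\F,Z)\leq C$ directly by the push-down/algebraic-lift/bounded-section/pull-back decomposition with every constant tracked; your estimates ($Z(\Psi)=Z(\Phi)$, $Z(\Theta)\leq Z(\Phi)+2\lambda a$, the final collection giving $C$ in terms of $\beta_\kappa(X)$, $\beta_\kappa(Y)$, $\n{i}$, $\n{i^{-1}}$, $\n{q}$, $\lambda$ only, never $\F$) all check out, as does your two-sided reformulation of the $\kappa$-$\SVM$ property via $\beta_\kappa$, which is the paper's Corollary \ref{cor_iff}. One point in your write-up is inaccurate, though in your favour: Corollary \ref{cor_iff} holds for \emph{every} cardinal $\kappa$, so contrary to your closing remark the hypothesis $\cf(\kappa)>\omega$ is used nowhere in your argument; you have therefore proved the theorem for all $\kappa$, which is exactly the issue the paper leaves open (Problem 6 in Section 9). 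Your constant does depend on the fixed exact sequence, so it does not answer Problem 6 in its sequence-independent formulation, but sequence-dependence is harmless for the $\mathsf{3SP}$ application. In short, the two approaches trade different resources: the paper pays a set-theoretic hypothesis to get away with soft homology and a combinatorial amalgamation of algebras, while you pay careful bookkeeping to obtain a hypothesis-free, quantitative proof, in the spirit of the Cabello S\'anchez--Castillo work on stability constants that the paper cites.
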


Our strategy will be based on the homological result saying that for every Banach space $\mathfrak{X}$ the property having the form $\Ext(\mathfrak{X},\cdot)=0$ is a~$\mathsf{3SP}$ property. It follows from the fact that every short exact sequence 
\begin{equation}\label{jqyz}
\exi{j}{q}{Y}{Z}{X}
\end{equation}
of Banach spaces $X$, $Y$, $Z$ induces a~long homology sequence
\begin{equation*}
\begin{split}
0 &\longrightarrow\mathscr{B}(\mathfrak{X},Y)\overset{j_\ast}{\longrightarrow}\mathscr{B}(\mathfrak{X},Z)\overset{q_\ast}{\longrightarrow}\mathscr{B}(\mathfrak{X},X)\overset{\theta}{\longrightarrow}\\
&\overset{\theta}{\longrightarrow}\Ext(\mathfrak{X},Y)\overset{\alpha}{\longrightarrow}\Ext(\mathfrak{X},Z)\overset{\beta}{\longrightarrow}\Ext(\mathfrak{X},X)\longrightarrow 0
\end{split}
\end{equation*} 
in the category of vector spaces and linear maps ($\mathscr{B}(E,F)$ is the set of all bounded, linear operators from $E$ to $F$). Here $j_\ast$ is composing with $j$ on the left; $q_\ast$ is composing with $q$ on the left; $\theta(T)$ is the lower row from the pull-back applied to \eqref{jqyz} and any given operator $T\in\mathscr{B}(\mathfrak{X},X)$; $\alpha$ produces a~push-out from any given element from $\Ext(\mathfrak{X},Y)$ and the operator $j$; $\beta$ produces a~push-out from any given element from $\Ext(\mathfrak{X},Z)$ and the operator $q$. For a~detailed description of long homology sequences, see \cite{sanchez_castillo (homology)}.

To put our plan into action let us start with a~quantitative counterpart of Theorem \ref{T2} for the spaces $\mathcal{X}_\F$ introduced in Section 5.
\begin{theorem}\label{quant}
Let $\kappa$ be a cardinal number and $X$ be a Banach space. If $X$ has the $\kappa$-$\SVM$ property then for every set algebra $\F$ with $\abs{\F}<\kappa$ the pair $(\mathcal{X}_\F,X)$ splits and, moreover,
\begin{equation}\label{Kquant}
K(X_\F,X)\leq 4(2+v(\kappa,X)).
\end{equation}
\end{theorem}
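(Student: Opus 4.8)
The plan is to mirror the proofs of Theorems \ref{T1} and \ref{T2}: manufacture a vector measure from a quasi-linear map, subtract its linear extension, and estimate the remainder. The only genuinely new work is an analysis of the norm $\n{\cdot}_\F$, which must take over the role played by the $\ell_\infty$-structure there. Concretely, fix $v>v(\kappa,X)$ and let $f\in\Lambda(X_\F,X)$ with $\delta=\Delta(f)$. I would set $\nu(A)=f(\ind_A)$ for $A\in\F$; since $\ind_{A\cup B}=\ind_A+\ind_B$ for disjoint $A,B$ and $\n{\ind_A}_\F\le 1$, the map $\frac{1}{2\delta}\nu$ is $1$-additive, so the $\kappa$-$\SVM$ property (applicable because $\abs{\F}<\kappa$) yields a vector measure $\mu\colon\F\to X$ with $\n{f(\ind_A)-\mu(A)}\le 2\delta v$ for every $A\in\F$. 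Let $h\colon X_\F\to X$ be the natural linear extension of $\mu$ (well defined because $\mu$ is additive) and put $r=f-h$, a quasi-linear map with $\Delta(r)\le\delta$ and $\n{r(\ind_A)}\le 2\delta v$ for all $A\in\F$. The entire task then reduces to
\begin{equation}\label{plan-key}
\n{r(t)}\le 4\delta(2+v)\n{t}_\F\qquad\mbox{for }t\in X_\F,
\end{equation}
because \eqref{plan-key} gives $\dist(f,h)\le 4(2+v)\Delta(f)$, and letting $v$ decrease to $v(\kappa,X)$, Theorem \ref{st_Kalton} delivers both the splitting of $(\mathcal{X}_\F,X)$ and the bound \eqref{Kquant}.

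The two features of $(X_\F,\n{\cdot}_\F)$ that make \eqref{plan-key} work are: (1) for every $S\in\F$ the pointwise multiplication $t\mapsto\ind_S t$ is a contraction, which is clear from $\ind_S\ind_A=\ind_{S\cap A}$ and $S\cap A\in\F$; and (2) $\n{t}_\infty\le\n{t}_\F$, which holds because each point evaluation $A\mapsto\ind_A(p)$ is a finitely additive $\{0,1\}$-valued measure, hence $t\mapsto t(p)$ is a functional of norm at most $1$ on $X_\F$. These substitute, respectively, for the boundedness of partial sums and the sup-norm control exploited in Theorem \ref{T1}. With them I would run the three-step Kalton--Roberts scheme. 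Given $t\in X_\F$, pass to the finite subalgebra it generates, with atoms $[\omega_1],\dots,[\omega_m]\in\F$, and write $t=\sum_{i=1}^m t_i\ind_{[\omega_i]}$. Telescoping the quasi-linear defect of $r$ along the partial sums $\ind_{S_j}t$, where $S_j=[\omega_1]\cup\dots\cup[\omega_j]$, and invoking (1) and (2) yields the crude, dimension-dependent estimate $\n{r(t)}\le 2(m+vm-1)\delta\n{t}_\F$.

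Next I would refine this for $t\ge 0$ with $\n{t}_\infty\le 1$. Choosing $A_1,\dots,A_m\in\F$ with $\n{t-\sum_{k=1}^m 2^{-k}\ind_{A_k}}_\infty\le 2^{-m}$ and using the self-similarity of the dyadic sum exactly as in Theorem \ref{T1} (each partial dyadic sum is non-negative with $\n{\cdot}_\F\le 1$ by the triangle inequality), the defect telescopes to at most $2\delta$, so $\n{r(\sum_k 2^{-k}\ind_{A_k})}\le 2\delta(1+v)$; letting $m\to\infty$ the crude bound kills the tail and gives $\n{r(t)}\le\delta(3+2v)$, hence $\n{r(u)}\le\delta(3+2v)\n{u}_\infty\le\delta(3+2v)\n{u}_\F$ for all $u\ge 0$ by homogeneity and (2). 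Finally, for arbitrary $t$ write $t=t^+-t^-$ with $t^+=\ind_{\{t>0\}}t$ and $t^-=\ind_{\{t<0\}}(-t)$ non-negative, so $\n{t^\pm}_\F\le\n{t}_\F$ by (1). One application of quasi-linearity and homogeneity then gives
$$\n{r(t)}\le\delta\bigl(\n{t^+}_\F+\n{t^-}_\F\bigr)+\n{r(t^+)}+\n{r(t^-)}\le 2\delta(4+2v)\n{t}_\F=4\delta(2+v)\n{t}_\F,$$
which is \eqref{plan-key}.

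The main obstacle is precisely that $X_\F$ carries the $\ell_1$-quotient norm rather than the $\ell_\infty$-norm, so the two places where Theorems \ref{T1} and \ref{T2} rely on sup-norm structure—bounding partial sums and handling the dyadic and the positive/negative decompositions—have no literal analogue. Isolating properties (1) and (2) is exactly what makes the transfer go through; once they are in hand, the dyadic recursion and the final invocation of Theorem \ref{st_Kalton} are essentially identical to the earlier arguments. A minor point to track is that the estimate is established for each $t$ inside the finite subalgebra it generates, so that (1) and (2) are applied to genuine members of $\F$; and since the infimum defining $v(\kappa,X)$ need not be attained, one argues throughout with an arbitrary $v>v(\kappa,X)$ and passes to the infimum only at the end.
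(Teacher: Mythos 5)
Your architecture coincides with the paper's: the reduction to $r=f-h$ with $\n{r(\ind_A)}\leq 2\delta v$, the atom-counting crude bound (this is the paper's Lemma \ref{ru} combined with Lemma \ref{step}, and your properties (1) and (2) are correct), the dyadic approximation for $0\leq t\leq\ind_\Omega$, and the final $t=t^+-t^-$ split with the same constants. The gap is in your Step 2, at exactly the point where this theorem differs from Theorems \ref{T1} and \ref{T2}. You choose the dyadic sets subject only to $\n{t-s_m}_\infty\leq 2^{-m}$ and assert that ``the crude bound kills the tail.'' But your crude bound reads $\n{r(u)}\leq 2(m'+vm'-1)\delta\n{u}_\F$, where $m'$ is the number of atoms needed to represent $u$ disjointly; to kill the tail $u=t-s_m$ you need both (a) an upper bound on $\n{t-s_m}_\F$ tending to $0$, and (b) control of $m'$ as $m\to\infty$. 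Neither is supplied by your tools: your (2) is $\n{\cdot}_\infty\leq\n{\cdot}_\F$, which goes the wrong way, so sup-norm smallness of the tail gives no bound on its $\F$-norm by itself; and if the $A_k$ are arbitrary members of $\F$, the algebra generated by $t,A_1,\dots,A_m$ can have as many as $N2^m$ atoms, so the product $(m'+vm'-1)\,\n{t-s_m}_\F$ need not tend to zero and can even diverge. The same uncontrolled quantity $\n{t-s_m}_\F$ also enters the splitting step $\n{r(t)}\leq\n{r(s_m)}+\n{r(t-s_m)}+\delta\bigl(\n{s_m}_\F+\n{t-s_m}_\F\bigr)$, so the limit $\delta(3+2v)$ is not reached.

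What is missing is the paper's one genuinely new idea: the dyadic sets must be chosen compatibly with the atoms of $t$. Writing $t=\sum_{j=1}^N a_j\ind_{A_j}$ with the $A_j\in\F$ pairwise disjoint, the paper takes $B_k=\bigcup\bigl\{A_j\colon 2^{-k}\mbox{ appears in the binary expansion of }a_j\bigr\}$ and $s_m=\sum_{k\leq m}2^{-k}\ind_{B_k}$; then $t-s_m$ is \emph{constant on each} $A_j$ with values of modulus at most $2^{-m}$, so the definition of $\n{\cdot}_\F$ as an infimum over representations gives $\n{t-s_m}_\F\leq N2^{-m}\to 0$ while the relevant atom count stays at $N$, and both offending terms vanish. (One can in fact prove the reverse comparison $\n{u}_\F\leq 2\n{u}_\infty$ for $\F$-simple $u$, via the layer-cake representation through level sets, which lie in $\F$; but this is not among your (1)--(2), and even granted it, point (b) still forces the $A_k$ into the subalgebra generated by $t$.) Your closing remark that the estimate is carried out ``inside the finite subalgebra $t$ generates'' gestures at the right restriction, but you never impose it on the $A_k$ nor record any upper bound on $\n{t-s_m}_\F$, and without these the limiting argument fails.
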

Before proving this theorem let us note two simple facts.
\begin{lemma}\label{step}
Let $\F$ be any set algebra and let $t\in X_\F$, $t=\sum_{j=1}^Na_j\ind_{A_j}$, where $a_j\in\R$ and $A_j\in\F$ {\rm (}for $1\leq j\leq N${\rm )} are pairwise disjoint. Then, for every set $S\subset\{1,\ldots ,N\}$ we have
\begin{equation}\label{SF}
\Biggl\|\sum_{j\in S}a_j\ind_{A_j}\Biggr\|_\F\leq\Biggl\|\sum_{j=1}^Na_j\ind_{A_j}\Biggr\|_\F.
\end{equation}
Consequently, putting $t^+=\max\{t,0\}$ and $t^-=-\min\{t,0\}$ we have $t^+,t^-\in X_\F$, $\n{t^+}_\F\leq\n{t}_\F$ and $\n{t^-}_\F\leq\n{t}_\F$.
\end{lemma}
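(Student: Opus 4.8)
The plan is to exploit the two defining features of the norm $\n{\cdot}_\F$: it is an $\ell_1$-type infimum over representations of a vector by characteristic functions of members of $\F$, and $\F$ is closed under the Boolean operations. The key observation is that multiplying any such representation by a fixed characteristic function $\ind_E$ with $E\in\F$ never increases the $\ell_1$-cost. First I would set $E=\bigcup_{j\in S}A_j$, which lies in $\F$ since $\F$ is closed under finite unions. Because the sets $A_j$ are pairwise disjoint, for $j\notin S$ we have $A_j\cap E=\varnothing$, and a pointwise check then yields the reduction identity
$$\sum_{j\in S}a_j\ind_{A_j}=t\cdot\ind_E,$$
the product being taken coordinatewise.

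Next I would transport an arbitrary near-optimal representation of $t$ to one of $t\cdot\ind_E$. Fix $\e>0$ and choose $\alpha_i\in\R$ and $B_i\in\F$ with $t=\sum_{i=1}^k\alpha_i\ind_{B_i}$ and $\sum_{i=1}^k\abs{\alpha_i}\leq\n{t}_\F+\e$. Multiplying by $\ind_E$ and using $\ind_{B_i}\cdot\ind_E=\ind_{B_i\cap E}$ gives
$$\sum_{j\in S}a_j\ind_{A_j}=\sum_{i=1}^k\alpha_i\ind_{B_i\cap E},$$
where each $B_i\cap E$ belongs to $\F$, since an algebra of sets is closed under intersection by De Morgan's laws. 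This is an admissible representation of the left-hand side with the very same coefficients, whence $\n{\sum_{j\in S}a_j\ind_{A_j}}_\F\leq\sum_{i=1}^k\abs{\alpha_i}\leq\n{t}_\F+\e$; letting $\e\to 0$ produces \eqref{SF}.

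For the consequence I would merely specialise \eqref{SF} to two index sets. By pairwise disjointness of the $A_j$ one has $t^+=\sum_{j\in S^+}a_j\ind_{A_j}$ with $S^+=\{j:a_j>0\}$ and $t^-=-\sum_{j\in S^-}a_j\ind_{A_j}$ with $S^-=\{j:a_j<0\}$; in particular both are finite combinations of characteristic functions of members of $\F$, so they lie in $X_\F$. Applying \eqref{SF} with $S=S^+$ gives $\n{t^+}_\F\leq\n{t}_\F$, while applying it with $S=S^-$, combined with the absolute homogeneity of the norm, gives $\n{t^-}_\F\leq\n{t}_\F$.

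I do not expect any genuine obstacle here: the statement is elementary. The only two points that deserve a moment's attention are the reduction identity $\sum_{j\in S}a_j\ind_{A_j}=t\cdot\ind_E$, which is exactly where the disjointness hypothesis enters, and the closure of $\F$ under intersection, which is what makes the truncated representation $\sum_i\alpha_i\ind_{B_i\cap E}$ admissible. Everything else is bookkeeping.
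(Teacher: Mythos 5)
Your proof is correct and follows essentially the same route as the paper: the paper likewise observes that any representation $t=\sum_i b_i\ind_{B_i}$ can be intersected with the union of the $A_j$, $j\in S$, to produce an admissible representation of $\sum_{j\in S}a_j\ind_{A_j}$ with the same coefficients, and then passes to the infimum. Your explicit identity $\sum_{j\in S}a_j\ind_{A_j}=t\cdot\ind_E$ with $E=\bigcup_{j\in S}A_j$ and the $\e$-near-optimal representation are just a slightly more spelled-out version of the same argument, including the same treatment of $t^+$ and $t^-$.
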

\begin{proof}
For any $n\in\N$, any scalars $b_j$, and any sets $B_j\in\F$ ($1\leq j\leq n$) such that $t=\sum_{j=1}^nb_j\ind_{B_j}$ we have $$\sum_{j\in S}a_j\ind_{A_j}=\sum_{j=1}^nb_j\ind_{B_j\cap S}.$$Hence, passing to the infimum over all such representations of $t$, we obtain inequality \eqref{SF}.

Next, observe that
$$
t^+=\sum_{\{j\colon a_j>0\}}a_j\ind_{A_j}\in X_\F\quad\mbox{ and }\quad t^-=\sum_{\{j\colon a_j<0\}}a_j\ind_{A_j}\in X_\F
$$
thus the two desired inequalities follow now from \eqref{SF}.
\end{proof}
\begin{lemma}\label{ru}
Let $\F$ be any set algebra, $X$ be a~Banach space and $r\colon X_\F\to X$ be a~quasi-linear map with $\delta=\Delta(r)$. Let also $u=\sum_{j=1}^Nb_j\ind_{A_j}$, where $b_j\in\R$ and $A_j\in\F$ {\rm (}for $1\leq j\leq N${\rm )} are pairwise disjoint. Then 
$$
\Biggl\|r(u)-\sum_{j=1}^Nb_jr\bigl(\ind_{A_j}\bigr)\Biggr\|\leq 2(N-1)\delta\n{u}_\F.
$$
\end{lemma}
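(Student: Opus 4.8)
The plan is to telescope the difference $r(u)-\sum_{j=1}^N b_j r(\ind_{A_j})$ along the partial sums of $u$, to control each increment by the quasi-linearity inequality, and then to collapse all the resulting $\n{\cdot}_\F$-norms down to $\n{u}_\F$ by invoking Lemma \ref{step}. The disjointness hypothesis on the $A_j$ is precisely what makes this last step work uniformly.

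First I would use homogeneity (condition (i) of Definition \ref{quasi_linear}) to write $b_j r(\ind_{A_j})=r(b_j\ind_{A_j})$, so that it suffices to bound $\n{r(u)-\sum_{j=1}^N r(x_j)}$, where $x_j:=b_j\ind_{A_j}$ and $u=\sum_{j=1}^N x_j$. Setting $S_k:=\sum_{j=1}^k x_j$ (so $S_1=x_1$ and $S_N=u$), I would use the telescoping identity
$$r(u)-\sum_{j=1}^N r(x_j)=\sum_{k=2}^N\bigl(r(S_k)-r(S_{k-1})-r(x_k)\bigr),$$
which holds because $S_k=S_{k-1}+x_k$ and $r(S_1)=r(x_1)$.

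Next I would apply condition (ii) of quasi-linearity to each summand, giving $\n{r(S_k)-r(S_{k-1})-r(x_k)}\leq\delta\bigl(\n{S_{k-1}}_\F+\n{x_k}_\F\bigr)$. The crucial observation is that, since $A_1,\ldots,A_N$ are pairwise disjoint, both $S_{k-1}=\sum_{j=1}^{k-1}b_j\ind_{A_j}$ and $x_k=b_k\ind_{A_k}$ are sub-sums of the disjointly supported representation $u=\sum_{j=1}^N b_j\ind_{A_j}$. Hence Lemma \ref{step}, applied with $S=\{1,\ldots,k-1\}$ and with $S=\{k\}$ respectively, yields $\n{S_{k-1}}_\F\leq\n{u}_\F$ and $\n{x_k}_\F\leq\n{u}_\F$.

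Summing the $N-1$ increments would then give
$$\Biggl\|r(u)-\sum_{j=1}^N b_j r(\ind_{A_j})\Biggr\|\leq\sum_{k=2}^N\delta\bigl(\n{S_{k-1}}_\F+\n{x_k}_\F\bigr)\leq 2(N-1)\delta\n{u}_\F,$$
which is exactly the claimed bound. I do not expect any genuine obstacle here; the only point demanding care is to note that every partial sum $S_{k-1}$ stays a disjoint sub-sum of $u$, so that Lemma \ref{step} applies uniformly and produces the linear-in-$N$ factor $2(N-1)$ rather than anything growing faster.
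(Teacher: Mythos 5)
Your proof is correct and is essentially the paper's own argument: the paper runs the same computation as a backwards recursion $d_N\leq 2\delta\n{u}_\F+d_{N-1}\leq\ldots$, peeling off the last term $b_N\ind_{A_N}$ at each step, which unfolds precisely to your forward telescoping sum over the partial sums $S_k$, with Lemma \ref{step} invoked in exactly the same way to bound $\n{S_{k-1}}_\F$ and $\n{x_k}_\F$ by $\n{u}_\F$.
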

\begin{proof}
Denote $$d_k=\Biggl\|r(u)-\sum_{j=1}^kb_jr(\ind_{A_j})\Biggr\|\quad\mbox{for }1\leq k\leq N.$$
Using the quasi-linearity of $r$ recursively $(N-1)$ times, and applying Lemma~\ref{step}, we get
\begin{equation*}
\begin{split}
d_N &\leq\Biggl\|r(u)-r\Biggl(\sum_{j=1}^{N-1}b_j\ind_{A_j}\Biggr)-r\bigl(b_N\ind_{A_N}\bigr)\Biggr\|+\Biggl\|r\Biggl(\sum_{j=1}^{N-1}b_j\ind_{A_j}\Biggr)-\sum_{j=1}^{N-1}b_jr\bigl(\ind_{A_j}\bigr)\Biggr\|\\
&\leq\delta\Biggl(\Biggl\|\sum_{j=1}^{N-1}b_j\ind_{A_j}\Biggr\|_\F+\bigl\|b_N\ind_{A_N}\bigr\|_\F\Biggr)+d_{N-1}\\
&\overset{\eqref{SF}}{\leq}2\delta\n{u}_\F+d_{N-1}\leq\ldots\leq 2(N-1)\delta\n{u}_\F.
\end{split}
\end{equation*}
\end{proof}

\begin{proof}[Proof of Theorem \ref{quant}]
Let $\F\subset 2^\Omega$ be any set algebra with $\abs{\F}<\kappa$ and let $v>v(\kappa,X)$. We are to prove that for every quasi-linear map $f\colon X_\F\to X$, with $\delta=\Delta(f)$, there exists a~linear map $h\colon X_\F\to X$ such that 
\begin{equation*}
\n{f(t)-h(t)}\leq 4\delta(2+v)\n{t}_\F\quad\mbox{for }t\in X_\F.
\end{equation*}
Proceeding likewise in the proof of Theorem \ref{T1} we get a~linear map $h$ which satisfies $\n{r(\ind_A)}\leq 2\delta v$ for $A\in\F$, where $r=f-h$ is quasi-linear with $\Delta(r)\leq\delta$.

Lemma \ref{step} implies that for every $t\in X_\F$ we have $$\n{r(t)}\leq\delta\bigl(\n{t^+}_\F+\n{t^-}_\F\bigr)+\n{r(t^+)}+\n{r(t^-)}.$$Hence, by homogeneity, it is enough to show that for every $t\in X_\F$ with $0\leq t\leq\ind_\Omega$ we have $\n{r(t)}\leq\delta(3+2v)$. So, fix any such $t$. We may write $t=\sum_{j=1}^Na_j\ind_{A_j}$, where $0\leq a_j\leq 1$ and $A_j\in\F$ are pairwise disjoint ($1\leq j\leq N$). 

For every $k\in\N$ define $$B_k=\bigcup\bigl\{A_j\colon 2^{-k}\mbox{ appears in the binary representation of }a_j\bigr\}$$(for rational numbers we choose representations with infinitely many terms). Let $$s_m=\sum_{k=1}^m\frac{1}{2^k}\ind_{B_k}\quad\mbox{for }m\in\N.$$From the very definition of the norm $\n{\cdot}_\F$ we get
\begin{equation}\label{sm1}
\n{s_m}_\F\leq\sum_{k=1}^m\frac{1}{2^k}<1\quad\mbox{for }m\in\N.
\end{equation}
Obviously, the distance between $t$ and $s_m$ in the supremum norm is at most $2^{-m}$, but observe also that $t-s_m$ is constant on each of the sets $A_j$ ($1\leq j\leq N$), say $t-s_m=\sum_{j=1}^Nb_j\ind_{A_j}$. This shows that
\begin{equation}\label{t-sm}
\n{t-s_m}_\F\leq N\cdot 2^{-m}\quad\mbox{for every }m\in\N.
\end{equation}
We may also apply Lemma \ref{ru} to $u=t-s_m$ getting
\begin{equation*}
\Biggl\|r(t-s_m)-\sum_{j=1}^Nb_jr\bigl(\ind_{A_j}\bigr)\Biggr\|\leq 2(N-1)\delta\n{t-s_m}_\F\leq 2^{-m+1}(N-1)N\delta.
\end{equation*}
Hence, 
\begin{equation}\label{r(t-sm)}
\begin{split}
\n{r(t-s_m)} &\leq 2^{-m+1}(N-1)N\delta+\Biggl\|\sum_{j=1}^Nb_jr\bigl(\ind_{A_j}\bigr)\Biggr\|\\
&\leq 2^{-m+1}(N-1)N\delta+\sum_{j=1}^N\abs{b_j}\cdot\n{r\bigl(\ind_{A_j}\bigr)}\\
&\leq 2^{-m+1}(N-1)N\delta+2^{-m+1}N\delta v=:\e_m
\end{split}
\end{equation}

Likewise in the proof of Theorem \ref{T1}, we have $$\Biggl\|r(s_m)-\sum_{k=1}^m{1\over 2^k}r(\ind_{B_k})\Biggr\|\leq\delta\sum_{k=1}^m{k\over 2^k}\leq 2\delta ,$$thus 
\begin{equation}\label{sm}
\n{r(s_m)}\leq 2\delta(1+v)\quad\mbox{for }m\in\N.
\end{equation}
Consequently, using \eqref{r(t-sm)}, \eqref{sm}, \eqref{t-sm} and \eqref{sm1}, we obtain
\begin{equation*}
\begin{split}
\n{r(t)} &\leq\n{r(t-s_m)}+\n{r(s_m)}+\delta\bigl(\n{t-s_m}_\F+\n{s_m}_\F\bigr)\\
&\leq\e_m+2\delta(1+v)+\delta(N\cdot 2^{-m}+1)\xrightarrow[m\to\infty]{\phantom{xxxxxx}}\delta(3+2v),
\end{split}
\end{equation*}
as required.
\end{proof}

\begin{remark}\label{localK}
What we have really proved is the following `local' version of Theorem \ref{quant}: If $\F$ is a~set algebra, $X$ is a~Banach space, and $v(\F,X)$ is the infimum of all those constants $v\leq\infty$ such that for every $1$-additive function $\nu\colon\F\to X$ there exists a~vector measure $\mu\colon\F\to X$ with $\n{\nu(A)-\mu(A)}\leq v$ for each $A\in\F$ (we allow $v=\infty$), then $K(X_\F,X)\leq 4(2+v(\F,X))$. Hence, $(\mathcal{X}_\F,X)$ splits provided that $v(\F,X)<\infty$.
\end{remark}

\begin{corollary}\label{cor_iff}
Let $\kappa$ be a cardinal number and $X$ be a~Banach space. Then, the following assertions are equivalent:
\begin{itemize*}
\item[{\rm (i)}] $X$ has the $\kappa$-$\SVM$ property;
\item[{\rm (ii)}] $K(X_\F,X)\leq B$ for some $B<\infty$ and every set algebra $\F$ with $\abs{\F}<\kappa$;
\item[{\rm (iii)}] $Z(X_\F,X)\leq C$ for some $C<\infty$ and every set algebra $\F$ with $\abs{\F}<\kappa$.
\end{itemize*}
\end{corollary}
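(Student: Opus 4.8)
The plan is to prove the cycle of implications $(\mathrm{i})\Rightarrow(\mathrm{ii})\Rightarrow(\mathrm{iii})\Rightarrow(\mathrm{i})$. The first implication is nothing but Theorem \ref{quant}: if $X$ has the $\kappa$-$\SVM$ property, then $K(X_\F,X)\leq 4(2+v(\kappa,X))$ for every set algebra $\F$ with $\abs{\F}<\kappa$, so (ii) holds with $B=4(2+v(\kappa,X))$.

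For $(\mathrm{ii})\Rightarrow(\mathrm{iii})$ I would exploit the elementary inclusion $\Xi(X_\F,X)\subseteq\Lambda(X_\F,X)$. Indeed, taking $n=2$ in condition (iii) of Definition \ref{quasi_linear} shows that every zero-linear map $f$ is quasi-linear with $\Delta(f)\leq Z(f)$. Hence, given $f\in\Xi(X_\F,X)$ and $\e>0$, assertion (ii) furnishes (since any constant exceeding the infimum $K(X_\F,X)$ works) a linear map $h\colon X_\F\to X$ with $\dist(f,h)\leq(B+\e)\Delta(f)\leq(B+\e)Z(f)$; letting $\e\to 0$ yields $Z(X_\F,X)\leq K(X_\F,X)\leq B$, so (iii) holds with $C=B$.

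The substantial implication is $(\mathrm{iii})\Rightarrow(\mathrm{i})$, and here the idea is to re-run the first half of the proof of Theorem \ref{k_injective} but to replace the appeal to $\kappa$-injectivity by the hypothesis (iii). Let $\F$ be a set algebra with $\abs{\F}<\kappa$ and let $\nu\colon\F\to X$ be $1$-additive. Fixing $\e>0$, I would build exactly the same homogeneous map $f_0\colon X_\F\to X$ satisfying conditions (a)--(c) from that proof; the Kalton--Roberts Theorem \ref{KR}, applied through functionals $x^\ast\in X^\ast$, shows verbatim as in the estimate \eqref{Zfm} that $f_0\in\Xi(X_\F,X)$ with $Z(f_0)\leq 2(1+\e)K$. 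Now assertion (iii) provides, for any $\e^\prime>0$, a linear map $g\colon X_\F\to X$ with $\dist(f_0,g)\leq(C+\e^\prime)Z(f_0)\leq 2(C+\e^\prime)(1+\e)K$.

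Finally I would set $\mu(A)=g(\ind_A)$ for $A\in\F$. Since $g$ is linear and $\ind_{A\cup B}=\ind_A+\ind_B$ for disjoint $A,B$, the map $\mu$ is a genuine (finitely additive) vector measure; and because $f_0(\ind_A)=\nu(A)$ and $\n{\ind_A}_\F\leq 1$, we obtain $\n{\nu(A)-\mu(A)}=\n{f_0(\ind_A)-g(\ind_A)}\leq 2(C+\e^\prime)(1+\e)K$ for every $A\in\F$. Crucially, this bound depends only on $C$, $\e$, $\e^\prime$ and the absolute constant $K$, and not on $\F$; letting $\e,\e^\prime\to 0$ shows that $X$ has the $\kappa$-$\SVM$ property with $v(\kappa,X)\leq 2CK$. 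The only genuine obstacle is the construction of $f_0$ and the control of $Z(f_0)$, but this is precisely the content already extracted in Section 5, so the present argument merely observes that the uniform constant in (iii) can be substituted directly, bypassing any completeness or weak$^\ast$-compactness argument.
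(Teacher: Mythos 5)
Your proposal is correct and follows essentially the same route as the paper: (i)$\Rightarrow$(ii) is Theorem \ref{quant}, (ii)$\Rightarrow$(iii) is the observation that every zero-linear map is quasi-linear with $\Delta(f)\leq Z(f)$ (which the paper dismisses as evident), and (iii)$\Rightarrow$(i) re-uses the zero-linear map $f_0$ with $Z(f_0)\leq 2(1+\e)K$ constructed in the proof of Theorem \ref{k_injective} and applies hypothesis (iii) directly to it, yielding the same uniform bound $v(\kappa,X)\leq 2CK$. Your remark that the bound is independent of $\F$, which is the crux of the matter, is exactly the point the paper makes as well.
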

\begin{proof}
The implication (i)$\Rightarrow$(ii) follows from Theorem \ref{quant}. The implication (ii)$\Rightarrow$(iii) is evident. The implication (iii)$\Rightarrow$(i) is hidden in the proof of Theorem \ref{k_injective}. In fact, we have shown therein that for any set algebra $\F$ with $\abs{\F}<\kappa$ and any $\e>0$ every $1$-additive function $\nu\colon\F\to X$ induces a~zero-linear map $f_0\in\Xi(X_\F,X)$ with $Z(f_0)\leq 2(1+\e)K$, which in turn induces a~locally convex twisted sum of $X$ and $\mathcal{X}_\F$. By condition (iii), for every $\delta>0$ there exist a~linear map $h\colon X_F\to X$ with $$\dist(f_0,h)\leq (C+\delta)Z(f_0)\leq 2(C+\delta)(1+\e)K,$$thus the vector measure $\mu\colon\F\to X$ defined by $\mu(A)=g(\ind_A)$ satisfies $$\n{\nu(A)-\mu(A)}\leq 2(C+\delta)(1+\e)K\quad\mbox{for each }A\in\F,$$which shows that $X$ has the $\kappa$-$\SVM$ property with $v(\kappa,X)\leq 2CK$.
\end{proof}

\begin{remark}\label{Zlocal}
The same reasoning as above leads to the following `local' version of the implication (iii)$\Rightarrow$(i) from Corollary \ref{cor_iff}: For every set algebra $\F$, every~Banach space $X$ and every $1$-additive function $\nu\colon\F\to X$ there exists a~vector measure $\mu\colon\F\to X$ such that $$\n{\nu(A)-\mu(A)}\leq 2K\cdot Z(X_\F,X)\quad\mbox{for each }A\in\F.$$
\end{remark}

\begin{proof}[Proof of Theorem \ref{3sp}]
Given an exact sequence \eqref{jqyz} suppose, towards a~contradiction, that $X$ and $Y$ have the $\kappa$-$\SVM$ property, while $Z$ does not. Then, by Theorem \ref{quant}, we have
\begin{equation*}
\Ext(\mathcal{X}_\F,X)=0\,\,\mbox{ and }\,\,\Ext(\mathcal{X}_\F,Y)=0\quad\mbox{for every set algebra }\F\mbox{ with }\abs{\F}<\kappa.
\end{equation*}
Hence, $\Ext(\mathcal{X}_\F,Z)=0$ for every set algebra $\F$ with $\abs{\F}<\kappa$. Therefore, by Remark \ref{Zlocal}, for any such set algebra $\F$ there exists a~constant $v<\infty$ such that for every $1$-additive function $\nu\colon\F\to Z$ there is a~vector measure $\mu\colon\F\to Z$ satisfying $\n{\nu(A)-\mu(A)}\leq v$ for $A\in\F$. Let $v(\F,Z)$ be the infimum of all such constants $v$. By our supposition, there exist a~sequence $(\F_n)_{n=1}^\infty$ of set algebras with $\kappa_n:=\abs{\F_n}<\kappa$ and such that for every $n\in\N$ we have $v(\F_n,Z)>n$; let $\nu_n\colon\F_n\to Z$ be a~$1$-additive function witnessing the last inequality. Since $\cf(\kappa)>\omega$, we have $\lambda:=\sup_n\kappa_n<\kappa$.

Now, regard each $\F_n$ as a~Boolean ring $(\F_n,\striangle,\cap)$ with identity $\Omega_n$ (that is, a~Boolean algebra) and let $\Sigma\subset\sprod_{n=1}^\infty\F_n$ be a~subring of the simple product of Boolean algebras, generated by the set $$\bigcup\bigl\{j_m(\F_m)\colon m\in\N\bigr\}\cup\bigl\{(\Omega_1,\Omega_2,\ldots)\bigr\},$$where $j_m\colon\F_m\to\sprod_{n=1}^\infty\F_n$ is the canonical injection. Plainly, $\Sigma$ is a~Boolean subalgebra of $\sprod_{n=1}^\infty\F_n$ and $\abs{\Sigma}=\max\{\omega,\lambda\}<\kappa$. However, the maps $\nu_m\circ\pi_m|_\Sigma\colon\Sigma\to Z$ (where $\pi_m\colon\sprod_{n=1}^\infty\F_n\to\F_m$ are the canonical projections), for $m\in\N$, witness that $v(\Sigma,Z)=\infty$; a~contradiction.
\end{proof}
\section{Characterisation of the $\SVM$ property for Banach spaces complemented in their biduals}
\noindent
Now, we give a characterization of the $\SVM$ property in the class of Banach spaces which are complemented in their bidual. Recall first the following result, \cite[Theorem 1.11]{jebreen_jamjoom_yost}:
\begin{theorem}[Jebreen, Jamjoom \&\ Yost \cite{jebreen_jamjoom_yost}]\label{JJY_theorem}
For any Banach spaces $Y$ and $Z$ the vector spaces $\Ext(Z,Y^\ast)$ and $\Ext(Y,Z^\ast)$ are isomorphic.
\end{theorem}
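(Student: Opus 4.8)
The plan is to realise the isomorphism concretely by composing two canonical operations on short exact sequences: \emph{dualisation} and \emph{pullback along the canonical embeddings into the bidual}. Throughout, for an operator $u\colon A\to B$ let $u^\ast\colon B^\ast\to A^\ast$ be its adjoint, let $J_E\colon E\to E^{\ast\ast}$ be the canonical embedding, and for the induced maps on the $\Ext$ functor write $u^\bullet$ for pullback (the contravariant action in the first variable) and $u_\bullet$ for pushout (the covariant action in the second variable). All of these are linear maps between the relevant $\Ext$ spaces, and the homological bookkeeping I shall invoke (exactness of dualisation on short exact sequences, functoriality of $u^\bullet$ and $u_\bullet$, and the fact that pullback in the first variable commutes with pushout in the second) is standard; see \cite[\S1]{castillo_gonzalez} and \cite{sanchez_castillo (homology)}.

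First I would define $\Psi\colon\Ext(Z,Y^\ast)\to\Ext(Y,Z^\ast)$. Given a short exact sequence $\mathcal{E}\colon\ex{Y^\ast}{W}{Z}$, its dual $\mathcal{E}^\ast\colon\ex{Z^\ast}{W^\ast}{Y^{\ast\ast}}$ is again exact (the adjoint of a quotient map is an isomorphic embedding and conversely, and exactness in the middle is routine), so $\mathcal{E}^\ast$ represents an element of $\Ext(Y^{\ast\ast},Z^\ast)$. Pulling it back along $J_Y\colon Y\to Y^{\ast\ast}$ yields $(J_Y)^\bullet\mathcal{E}^\ast\in\Ext(Y,Z^\ast)$, and I set $\Psi[\mathcal{E}]=(J_Y)^\bullet\mathcal{E}^\ast$. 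As dualisation and pullback are linear on $\Ext$, so is $\Psi$. Interchanging $Y$ and $Z$ gives a linear map $\Psi'\colon\Ext(Y,Z^\ast)\to\Ext(Z,Y^\ast)$, $\Psi'[\mathcal{F}]=(J_Z)^\bullet\mathcal{F}^\ast$.

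The heart of the argument is to verify $\Psi'\circ\Psi=\id$; then $\Psi\circ\Psi'=\id$ follows by symmetry and $\Psi$ is the required isomorphism. Fix $[\mathcal{E}]\in\Ext(Z,Y^\ast)$ and put $\mathcal{F}=(J_Y)^\bullet\mathcal{E}^\ast$. Dualising a pullback turns it into a pushout of the dualised sequence, i.e. $(u^\bullet\mathcal{A})^\ast=(u^\ast)_\bullet(\mathcal{A}^\ast)$; applied here this gives $\mathcal{F}^\ast=(J_Y^\ast)_\bullet(\mathcal{E}^{\ast\ast})$, where $\mathcal{E}^{\ast\ast}\in\Ext(Z^{\ast\ast},Y^{\ast\ast\ast})$ and $J_Y^\ast\colon Y^{\ast\ast\ast}\to Y^\ast$. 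Since pullback in the first variable commutes with pushout in the second,
$$\Psi'[\mathcal{F}]=(J_Z)^\bullet\mathcal{F}^\ast=(J_Y^\ast)_\bullet\bigl((J_Z)^\bullet\mathcal{E}^{\ast\ast}\bigr).$$
Naturality of $J$ yields a commuting morphism of short exact sequences from $\mathcal{E}$ to $\mathcal{E}^{\ast\ast}$ whose left and right vertical arrows are $J_{Y^\ast}$ and $J_Z$; such a morphism witnesses the identity $(J_Z)^\bullet\mathcal{E}^{\ast\ast}=(J_{Y^\ast})_\bullet\mathcal{E}$. Combining this with functoriality of pushout and the triangle identity $J_Y^\ast\circ J_{Y^\ast}=\id_{Y^\ast}$ I obtain
$$\Psi'[\mathcal{F}]=(J_Y^\ast)_\bullet(J_{Y^\ast})_\bullet\mathcal{E}=(J_Y^\ast\circ J_{Y^\ast})_\bullet\mathcal{E}=(\id_{Y^\ast})_\bullet\mathcal{E}=\mathcal{E}.$$

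I expect the main obstacle to be the careful bookkeeping of adjoints in this middle computation: correctly tracking which embedding ($J_Y$, $J_Z$, or $J_{Y^\ast}$) and which adjoint governs each step, verifying the pullback-to-pushout duality $(u^\bullet\mathcal{A})^\ast=(u^\ast)_\bullet(\mathcal{A}^\ast)$ in the Banach-space category, and confirming that the naturality square really delivers $(J_Z)^\bullet\mathcal{E}^{\ast\ast}=(J_{Y^\ast})_\bullet\mathcal{E}$. Once these standard identities are in place, the collapse to the identity via $J_Y^\ast\circ J_{Y^\ast}=\id_{Y^\ast}$ is immediate, and linearity together with bijectivity of $\Psi$ follow at once.
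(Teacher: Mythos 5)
You should first note a point of logistics: the paper does not prove this statement at all --- Theorem \ref{JJY_theorem} is quoted verbatim from Jebreen--Jamjoom--Yost \cite{jebreen_jamjoom_yost} (their Theorem 1.11), so there is no in-paper proof to compare against; I can only judge your argument on its own merits.

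On those merits, your argument is correct. The map $\Psi[\mathcal{E}]=(J_Y)^\bullet\mathcal{E}^\ast$ is well defined because dualisation carries exact sequences to exact sequences (Hahn--Banach gives surjectivity of $j^\ast$ and the open mapping theorem identifies $\mathrm{im}(q^\ast)$ with $\ker(j^\ast)$) and carries equivalences to equivalences; linearity with respect to the vector-space structure of $\Ext$ follows since duality interchanges the diagonal and codiagonal maps defining the Baer sum, again via the pullback--pushout duality you invoke. The three identities at the heart of your computation are all instances of one standard lemma --- a morphism of short exact sequences with outer vertical arrows $(\beta,\alpha)$ witnesses $\beta_\bullet(\mathrm{top})=\alpha^\bullet(\mathrm{bottom})$ --- applied to (a) the dual of the pullback diagram, giving $(u^\bullet\mathcal{A})^\ast=(u^\ast)_\bullet(\mathcal{A}^\ast)$, and (b) the naturality square for $J$, giving $(J_Z)^\bullet\mathcal{E}^{\ast\ast}=(J_{Y^\ast})_\bullet\mathcal{E}$; and the collapse via $J_Y^\ast\circ J_{Y^\ast}=\id_{Y^\ast}$ is a correct one-line verification. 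This is, moreover, essentially the known route: the construction ``dualise, then pull back along the canonical embedding'' is exactly the mechanism developed by Cabello S\'anchez and Castillo in \cite{sanchez_castillo (duality)} and exploited in \cite{jebreen_jamjoom_yost}, so your proof is not an exotic alternative but a clean reconstruction of the standard one. The only caveat worth recording is that the ``standard bookkeeping'' you defer (exactness of duality, pullback--pushout duality, commutation of the two operations) genuinely needs the Banach-space-specific inputs above --- in a general additive category with kernels and cokernels these steps can fail --- but in the Banach category each admits the short direct verification sketched here, so the gap is one of routine detail rather than of substance.
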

\begin{theorem}\label{T4}
Let $X$ be a Banach space complemented in its bidual. Then the following assertions are equivalent:
\begin{itemize*}
\item[{\rm (i)}] $X$ has the $\SVM$ property;
\item[{\rm (ii)}] $\mathrm{Ext}(X^\ast,\ell_1)=0$;
\item[{\rm (iii)}] $\mathrm{Ext}(\ell_\infty, X^{\ast\ast})=0$;
\item[{\rm (iv)}] $\mathrm{Ext}(c_0,X)=0$.
\end{itemize*}
\end{theorem}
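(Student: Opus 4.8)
The plan is to separate the purely homological relations among (ii), (iii), (iv) from the analytic passage linking them to (i), and to route the latter through the $\omega$-$\SVM$ property, exploiting that $X$ is a~complemented summand of $X^{\ast\ast}$; write $X^{\ast\ast}\simeq X\oplus V$. Four of the implications are essentially immediate. For (i)$\Rightarrow$(iv): the $\SVM$ property yields the $\omega_1$-$\SVM$ property, so Theorem \ref{T2}(ii) with $\Gamma=\omega$ shows that the pair $(c_0,X)$ splits, whence $\Ext(c_0,X)=0$. For (iv)$\Rightarrow$(ii): this is precisely the contrapositive of \eqref{l1c0}. For (ii)$\Rightarrow$(iv): since $\ell_1=c_0^\ast$, Theorem \ref{JJY_theorem} gives $\Ext(X^\ast,\ell_1)\cong\Ext(c_0,X^{\ast\ast})$, and because $\Ext(c_0,X)$ is a~direct summand of $\Ext(c_0,X^{\ast\ast})$ under the decomposition $X^{\ast\ast}\simeq X\oplus V$, it vanishes. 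For (iii)$\Rightarrow$(ii): Theorem \ref{JJY_theorem} gives $\Ext(\ell_\infty,X^{\ast\ast})\cong\Ext(X^\ast,\ell_\infty^\ast)$, and since the Yosida--Hewitt decomposition realises $\ell_1$ as a~complemented subspace of $\ell_\infty^\ast$, the summand $\Ext(X^\ast,\ell_1)$ vanishes. Thus already (i)$\Rightarrow$(iv)$\Leftrightarrow$(ii)$\Leftarrow$(iii).

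The two remaining implications, (ii)$\Rightarrow$(iii) and (ii)$\Rightarrow$(i), both rest on a~single weak$^\ast$-compactness device in the dual space $X^{\ast\ast}$, in the spirit of the proofs of Proposition \ref{bidual} and Theorem \ref{T1}. For (ii)$\Rightarrow$(iii), Theorem \ref{JJY_theorem} reduces the task to showing that, for the dual space $W=X^{\ast\ast}$, one has $\Ext(c_0,W)=0\Rightarrow\Ext(\ell_\infty,W)=0$. I would take a~twisted sum $\ex{W}{Z}{\ell_\infty}$, use the hypothesis to split it over the subspace $c_0\subset\ell_\infty$ (producing a~bounded projection of the preimage $q^{-1}(c_0)$ onto $W$), and then extend this projection to all of $Z$ by a~Lindenstrauss/Sobczyk-type averaging argument that uses the weak$^\ast$-compactness of bounded subsets of $W$. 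This is exactly the place where the dual structure of $X^{\ast\ast}$ is indispensable, since homological vanishing for $c_0$ does not transfer to $\ell_\infty$ in the first variable without such a~compactness input.

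For (ii)$\Rightarrow$(i) I would go through the $\omega$-$\SVM$ property: by the dichotomy of Proposition \ref{bidual} it suffices to show that $X$ has the $\omega$-$\SVM$ property, and by Corollary \ref{cor_iff} this is the \emph{uniform} bound $\sup_{\F}Z(X_\F,X)<\infty$ over all finite set algebras $\F$. Here the subtlety is that for each individual finite $\F$ the space $\mathcal{X}_\F$ is finite-dimensional, so $\Ext(\mathcal{X}_\F,X)=0$ holds automatically (finite-dimensional subspaces are complemented, or one applies Theorem \ref{JJY_theorem}); the entire content lies in making the resulting constant independent of $\F$. I would extract this uniformity from the splitting of $(\ell_\infty,X)$ together with the same bidual compactness argument: restricting a~quasi-linear map $\ell_\infty\to X$ along the norm-one coordinate projections $\ell_\infty\to\ell_\infty(\Omega)$ ($\Omega$ finite) gives a~constant for $K(\ell_\infty(\Omega),X)$ that does not depend on $\Omega$, and the Kalton--Roberts-type reconstruction from the first part of Theorem \ref{T1} converts a~uniform such constant into the required uniform $\SVM$ estimate over finite algebras. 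Proposition \ref{bidual} then upgrades $\omega$-$\SVM$ to the full $\SVM$ property.

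The main obstacle is precisely the step from the per-algebra vanishing $\Ext(\mathcal{X}_\F,X)=0$ to a~bound uniform in $\F$: homological vanishing alone never produces uniform constants, so the uniformity must be extracted from a~single infinite testing space ($\ell_\infty$, respectively $X^{\ast\ast}$) by means of weak$^\ast$-compactness in the bidual. All the genuine difficulty is concentrated in that compactness argument, which simultaneously powers the interchange of $c_0$ and $\ell_\infty$ in (ii)$\Leftrightarrow$(iii) and the manufacture of the uniform $\SVM$ constant in (ii)$\Rightarrow$(i).
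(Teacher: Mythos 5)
Your four soft implications are handled exactly as in the paper and are correct: (i)$\Rightarrow$(iv) via Theorem~\ref{T2}(ii), (iv)$\Rightarrow$(ii) via \eqref{l1c0}, and the two arguments combining Theorem~\ref{JJY_theorem} with complementation for (ii)$\Rightarrow$(iv) and (iii)$\Rightarrow$(ii). The gap is in the two implications that carry all the weight, (ii)$\Rightarrow$(iii) and (ii)$\Rightarrow$(i), both of which you make rest on the same unjustified step: extending the projection $P_0\colon q^{-1}(c_0)\to W$ to all of $Z$ by a ``Lindenstrauss/Sobczyk-type averaging argument using weak$^\ast$-compactness''. Weak$^\ast$-compactness only converts a \emph{uniformly bounded net of approximate extensions} into an extension; it does not produce the approximate extensions. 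Operators into a dual space do not in general extend from subspaces: $\ell_1=c_0^\ast$ is a dual space, yet the identity $\ell_1\to\ell_1$ admits no extension to $C[0,1]\supset\ell_1$, since by Pe\l czy\'nski's property (V) such an extension would be a weakly compact projection onto the non-reflexive space $\ell_1$. Concretely, $P_0$ extends to $Z$ if and only if the push-out of $0\to Z_0\to Z\to\ell_\infty/c_0\to 0$ along $P_0$ splits, so the obstruction lives in $\Ext(\ell_\infty/c_0,W)$ --- and nothing in hypothesis (ii) controls twisted sums of $W$ and $\ell_\infty/c_0$ (your averaging would require $q^{-1}(c_0)$ to be locally complemented in $Z$, which does not follow: finite-dimensional subspaces of $\ell_\infty/c_0$ lift to $\ell_\infty$ with uniform bounds by local reflexivity, but they need not lift further through $q$ with uniform bounds). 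Since your (ii)$\Rightarrow$(i) takes the splitting of $(\ell_\infty,X)$ (or at least $\Ext(\ell_\infty,X)=0$) as its input for uniformity, and that input is reachable in your scheme only through the broken (ii)$\Rightarrow$(iii), the whole plan reduces to this one missing step.

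The paper closes the loop the other way around, and this is the idea your proposal lacks: it proves (ii)$\Rightarrow$(i) \emph{directly on the dual side}, never invoking $\ell_\infty$. Given a finite algebra $2^\Omega$ and a $1$-additive $\nu\colon 2^\Omega\to X^{\ast\ast}$, one applies the scalar Kalton--Roberts Theorem~\ref{KR} to each function $x^\ast\circ\nu$ and checks that the homogenised assignment $x^\ast\mapsto\mu_{x^\ast}$ is a zero-linear map from $X^\ast$ into the measures on $2^\Omega$, hence (after the natural identification) into $\ell_1(\Omega)$, with $Z$-constant at most $4K$. Hypothesis (ii), through Theorem~\ref{stz_Kalton}, then yields a linear approximant with error $4Z(X^\ast,\ell_1)K\n{x^\ast}$, \emph{uniformly in} $\Omega$, because $\ell_1(\Omega)$ is $1$-complemented in $\ell_1$; the coordinates of this linear map are bounded functionals on $X^\ast$, i.e.\ elements of $X^{\ast\ast}$, and they reassemble into a vector measure within $(1+4Z(X^\ast,\ell_1))K$ of $\nu$. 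Thus $X^{\ast\ast}$ has the $\omega$-$\SVM$ property, Propositions~\ref{bidual} and~\ref{comp} upgrade this to the $\SVM$ property of $X^{\ast\ast}$ and of $X$, and only \emph{then} does (iii) follow, by applying Theorem~\ref{T2}(i) to $X^{\ast\ast}$. So your instinct that the uniform constant must come from a single infinite testing space is right, but the testing space is $\ell_1$ in the \emph{second} variable, reached by dualising the measure via Kalton--Roberts --- not $\ell_\infty$ in the first variable reached by splitting over $c_0$; in the paper, (ii)$\Rightarrow$(iii) is a corollary of the vector-measure machinery, not a homological preliminary to it.
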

\begin{proof}
The implication (iv)$\Rightarrow$(ii) follows from Theorem \ref{pulgarin_thm}. By Theorem \ref{JJY_theorem}, we have $\Ext(\ell_\infty,X^{\ast\ast})=\Ext(X^\ast,\ell_1^{\ast\ast})$, thus (iii)$\Rightarrow$(ii) as $\ell_1$ is complemented in its bidual. Similarly, we have $\Ext(X^\ast,\ell_1)=\Ext(c_0,X^{\ast\ast})$, hence (ii)$\Rightarrow$(iv) as $X$ is complemented in its bidual. Moreover, by Theorem \ref{T2}(ii), we infer that (i)$\Rightarrow$(iv) and also that (i) implies $\Ext(\ell_\infty,X)=0$ which in turn easily implies (iii) as $X$ is complemented in $X^{\ast\ast}$. Hence, the only thing left to be proved is the implication (ii)$\Rightarrow$(i).

Since $X$ is assumed to be complemented in $X^{\ast\ast}$, Proposition \ref{comp} says that $X$ has the $\SVM$ property whenever $X^{\ast\ast}$ does. Moreover, in view of Proposition \ref{bidual}, it is enough to show that $X^{\ast\ast}$ has the $\omega$-$\SVM$ property. So, let us fix any finite set algebra $\F\subset 2^\Om$ and any $1$-additive function $\nu\colon\F\to X^{\ast\ast}$. We may assume that $\F=2^\Omega$. The subspace of $X^{\ast\ast\ast}$ consisting of all $w^\ast$-continuous functionals on $X^{\ast\ast}$ is identified with $X^\ast$, so for any $x^\ast\in X^\ast$ let us write $x^\ast\circ\nu$ for the map $\F\ni A\mapsto\nu(A)x^\ast$.

Let $\M$ be the subspace of $\ell_\infty(\F)$ consisting of all set additive functions. Define an operator $T\colon\M\to\ell_1(\Om)$ by $T(m)=(m\{\om\})_{\om\in\Om}$. Clearly, we have$${1\over 2}\n{T(m)}_1\leq\n{m}_\infty\leq\n{T(m)}_1\quad\mbox{for }m\in\M .$$By virtue of the Kalton--Roberts Theorem \ref{KR}, for each $x^\ast\in X^\ast$, $\n{x^\ast}=1$, there is a~measure $\mu_{x^\ast}\in\M$ such that
\begin{equation}\label{T41}
\bigl|(x^\ast\circ\nu)(A)-\mu_{x^\ast}(A)\bigr|\leq K\n{x^\ast}\quad\mbox{for }A\in\F .
\end{equation}
For each pair $\{x^\ast,-x^\ast\}$ replace, if necessary, $\mu_{-x^\ast}$ by $-\mu_{x^\ast}$, and for every $\lambda\in\R$ define $\mu_{\lambda x^\ast}=\lambda\mu_{x^\ast}$. In this manner, we obtain a family $\{\mu_{x^\ast}:\, x^\ast\in X^\ast\}$ satisfying \eqref{T41} for every $x^\ast\in X^\ast$ and such that the mapping $\hat\mu\colon X^\ast\to\M$, defined by $\hat\mu(x^\ast)=\mu_{x^\ast}$, is homogeneous. Moreover, for all $x_1^\ast,\ldots ,x_n^\ast\in X^\ast$ and $A\in\F$ we have 
\begin{equation*}
\begin{split}
\Biggl|\hat\mu\Biggl(\sum_{k=1}^n x_k^\ast\Biggr)(A)&-\sum_{k=1}^n\hat\mu(x_k^\ast)(A)\Biggr|\\ 
&\leq\bigl|\mu_{x_1^\ast+\ldots+x_n^\ast}(A)-(x_1^\ast+\ldots+x_n^\ast)\circ\nu(A)\bigr|+\sum_{k=1}^n\bigl|\mu_{x_k^\ast}(A)-x_k^\ast\circ\nu(A)\bigr|\\
&\leq K\n{x_1^\ast+\ldots+x_n^\ast}+K\sum_{k=1}^n\n{x_k^\ast}\leq 2K\sum_{k=1}^n\n{x_k^\ast},
\end{split}
\end{equation*}
which shows that $\hat\mu$ is a zero-linear map with $Z(\hat\mu)\leq 2K$. Consequently, the composition $T\circ\hat\mu\colon X^\ast\to\ell_1(\Om)$ is also zero-linear and $Z(T\circ\hat\mu)\leq 4K$. Since $\ell_1(\Om)$ is a $1$-complemented subspace of $\ell_1$, there is a linear map $h\colon X^\ast\to\ell_1(\Om)$ such that $$\n{(T\circ\hat\mu)(x^\ast)-h(x^\ast)}\leq 4Z(X^\ast,\ell_1)K\n{x^\ast}\quad\mbox{for }x^\ast\in X^\ast.$$Defining a~linear map $\Psi\colon X^\ast\to\M$ by $\Psi=T^{-1}\circ h$, we obtain 
\begin{equation}\label{T42}
\n{\hat\mu(x^\ast)-\Psi(x^\ast)}\leq 4Z(X^\ast,\ell_1)K\n{x^\ast}\quad\mbox{for }x^\ast\in X^\ast .
\end{equation}

For each $\alpha\in\Om$ let $\Psi_\alpha\colon X^\ast\to\R$ be given as $\Psi_\alpha(x^\ast)=\Psi(x^\ast)\{\alpha\}$. Denote $$r_\alpha=\bigl(K+4Z(X^\ast,\ell_1)K+\n{\nu\{\alpha\}}\bigr)\n{x^\ast}.$$Then $\n{\Psi_\alpha}\leq r_\alpha$, and thus $\Psi_\om\in X^{\ast\ast}$, which follows from the following estimate:
\begin{eqnarray*}
\abs{\Psi_\alpha(x^\ast)}&\leq & \abs{\hat\mu(x^\ast)\{\alpha\}}+\n{\hat\mu(x^\ast)-\Psi(x^\ast)}_\infty\\
&\leq & \abs{(x^\ast\circ\nu)\{\alpha\}-\mu_{x^\ast}\{\alpha\}}+\abs{(x^\ast\circ\nu)\{\alpha\}}+4Z(X^\ast,\ell_1)K\n{x^\ast}\\
&\leq & K\n{x^\ast}+\n{\nu\{\alpha\}}\cdot\n{x^\ast}+4Z(X^\ast,\ell_1)K\n{x^\ast}=r_\alpha\n{x^\ast}\,\,\mbox{ for }x^\ast\in X^\ast.
\end{eqnarray*}

Now, define $\mu\colon\F\to X^{\ast\ast}$ by $$\mu(A)=\sum_{\alpha\in A}\Psi_\alpha\quad\mbox{for }A\in\F.$$
Then, for every $A\in\F$ and $x^\ast\in X^\ast$ we have
\begin{equation*}
\begin{split}
\mu(A)x^\ast &=\sum_{\alpha\in A}\Psi_\alpha x^\ast=\sum_{\alpha\in A}(T^{-1}\circ h)(x^\ast)\{\alpha\}=(T^{-1}\circ h)(x^\ast)(A),
\end{split}
\end{equation*}
because $(T^{-1}\circ h)(x^\ast)\in\M$. Consequently, $$(\mu(A\cup B)-\mu(A)-\mu(B)  )x^\ast=0\quad\mbox{for }x^\ast\in X^\ast\mbox{ and }A,B\in\F ,\,\, A\cap B=\varnothing,$$which means that $\mu$ is a~vector measure.

Finally, for every $x^\ast\in X^\ast$ and $A\in\F$ inequalities \eqref{T41} and \eqref{T42} yield
\begin{equation*}
\begin{split}
\abs{(\nu(A)&-\mu(A))x^\ast}=\abs{(x^\ast\circ\nu)(A)-(T^{-1}\circ h)(x^\ast)(A)}\\
&\leq\abs{(x^\ast\circ\nu)(A)-\hat\mu(x^\ast)(A)}+\n{\hat\mu(x^\ast)-\Psi(x^\ast)}\leq (1+4Z(X^\ast,\ell_1))K\n{x^\ast}.
\end{split}
\end{equation*}
Since the set of all $w^\ast$-continuous functionals on $X^{\ast\ast}$ is norming, our result follows with 
$$
v(X)\leq\n{\pi}v(X^{\ast\ast})\quad\mbox{ and }\quad v(X^{\ast\ast})\leq (1+4Z(X^\ast,\ell_1))K,
$$
where $\pi\colon X^{\ast\ast}\to X$ is any bounded projection onto $X$ (recall Propositions \ref{bidual} and \ref{comp}, and the fact that $X^{\ast\ast}$ is $1$-complemented in its bidual, as it is a~dual space).
\end{proof}

\begin{corollary}\label{svm_bidual}
Let $X$ be a~Banach space. Then $X^{\ast\ast}$ has the $\SVM$ property if and only if $\Ext(X^\ast,\ell_1)=0$.
\end{corollary}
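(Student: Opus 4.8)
The plan is to apply Theorem \ref{T4} not to $X$ itself but to its bidual $X^{\ast\ast}$, and then translate the resulting condition back to $X^\ast$ by means of the duality isomorphism of Theorem \ref{JJY_theorem}. The first thing to observe is that $X^{\ast\ast}$ is a dual space, being the dual of $X^\ast$, and every dual space is $1$-complemented in its own bidual (the canonical projection $X^{\ast\ast\ast\ast}\to X^{\ast\ast}$ is the adjoint of the canonical embedding $X^\ast\to X^{\ast\ast\ast}$). Hence $X^{\ast\ast}$ satisfies the standing hypothesis of Theorem \ref{T4}, so that theorem applies verbatim with $X$ replaced by $X^{\ast\ast}$.

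Applying the equivalence (i)$\Leftrightarrow$(iv) of Theorem \ref{T4} to the space $X^{\ast\ast}$, I would obtain that $X^{\ast\ast}$ has the $\SVM$ property if and only if $\Ext(c_0,X^{\ast\ast})=0$. It then remains only to identify this last condition with $\Ext(X^\ast,\ell_1)=0$. Writing $\ell_1=c_0^\ast$ and invoking Theorem \ref{JJY_theorem} with $Z=X^\ast$ and $Y=c_0$ gives the vector-space isomorphism
$$\Ext(X^\ast,\ell_1)=\Ext(X^\ast,c_0^\ast)\cong\Ext(c_0,(X^\ast)^\ast)=\Ext(c_0,X^{\ast\ast}).$$
In particular one of these spaces vanishes precisely when the other does, which closes the chain of equivalences and yields the assertion.

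There is essentially no obstacle here: the whole argument is a short deduction from Theorem \ref{T4}, once one records that the complementation-in-the-bidual hypothesis is automatic for $X^{\ast\ast}$. The only point deserving care is the bookkeeping in the application of Theorem \ref{JJY_theorem}, namely assigning the two variables so that the predual $c_0$ of $\ell_1$ lands in the correct slot; the choice $Z=X^\ast$, $Y=c_0$ does exactly this. I note that one could alternatively route the proof through condition (ii) of Theorem \ref{T4}, which would give that $X^{\ast\ast}$ has the $\SVM$ property if and only if $\Ext(X^{\ast\ast\ast},\ell_1)=0$; but then one would still have to establish $\Ext(X^{\ast\ast\ast},\ell_1)=0\Leftrightarrow\Ext(X^\ast,\ell_1)=0$, whose nontrivial implication is less immediate (the easy direction following from the fact that $X^\ast$ is complemented in $X^{\ast\ast\ast}$), so the route via (iv) and Theorem \ref{JJY_theorem} is preferable.
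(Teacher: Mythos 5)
Your proof is correct and takes essentially the same route as the paper: apply the equivalence (i)$\Leftrightarrow$(iv) of Theorem \ref{T4} to $X^{\ast\ast}$ (which, being a dual space, is automatically complemented in its bidual) and then identify $\Ext(c_0,X^{\ast\ast})$ with $\Ext(X^\ast,\ell_1)$ via Theorem \ref{JJY_theorem} with $Z=X^\ast$ and $Y=c_0$. The paper's own proof is exactly this two-step argument, stated in the reverse order.
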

\begin{proof}
In view of Theorem \ref{JJY_theorem}, $\Ext(X^\ast,\ell_1)=0$ is equivalent to $\Ext(c_0,X^{\ast\ast})=0$ which, by Theorem \ref{T4}, is in turn equivalent to $X^{\ast\ast}$ having the $\SVM$ property
\end{proof}

In view of Theorem \ref{T4}, it is interesting to ask what is the class of all Banach spaces $Y$ satisfying
\begin{equation}\label{Yl1}
\Ext(Y,\ell_1)=0.
\end{equation}
By the Lindenstrauss Lifting Principle, this condition holds true for any $\mathscr{L}_1$-space $Y$. More generally, for every $\mathscr{L}_1$-space $Y$ and every Banach space $Z$, complemented in $Z^{\ast\ast}$, we have $\Ext(Y,Z)=0$ (see \cite[Proposition 2.1]{kalton_pelczynski}). Up to the best of my knowledge, there is no example known of a~non-$\mathscr{L}_1$-space $Y$ satisfying \eqref{Yl1}. If such an example does not exist, then:
\begin{itemize*}
\item Every Banach space $X$ having the $\omega_1$-$\SVM$ property would be a~$\mathscr{L}_\infty$-space, which is equivalent both to $X^\ast$ being a~$\mathscr{L}_1$-space and to $X^{\ast\ast}$ being injective (see \cite[Chapter~5]{lindenstrauss_tzafriri}). Indeed, by Theorem \ref{T2}(ii), if $X$ has the $\omega_1$-$\SVM$ property then $\Ext(c_0,X)=0$, hence $\Ext(X^\ast,\ell_1)=0$.
\item Every bidual Banach space $X^{\ast\ast}$ having the $\SVM$ property would be necessarily injective (this follows from Corollary \ref{svm_bidual}).
\end{itemize*}

We shall present a partial result concerning condition \eqref{Yl1}. Before this, let us recall some definitions. Every exact sequence of the form
\begin{equation}\label{qP}
\exi{j}{q}{\mathrm{ker}(q)}{P}{Y},
\end{equation}
with $P$ being a~projective Banach space (that is, $P\simeq\ell_1(\Gamma)$ for some index set $\Gamma$), is called a~{\it projective presentation} of $Y$. Following \cite{kalton (locally)} we say that a~subspace $E$ of a~Banach space $X$ is {\it locally complemented} if there exists a~constant $\lambda$ such that for every finite-dimensional space $F\subset X$ and any $\e>0$ there is an~operator $T_F\colon F\to E$ such that $\n{T_F}\leq\lambda$ and $\n{T_Fx-x}<\e$ for each $x\in E\cap F$. We say that an~exact sequence $\exi{i}{\pi}{E}{X}{Y}$ {\it locally splits} whenever $i(E)$ is locally complemented in $X$. By a~standard compactness argument, and the Banach--Alaoglu theorem, this is equivalent to saying that the dual sequence $\exi{\pi^\ast}{i^\ast}{Y^\ast}{X^\ast}{E^\ast}$ splits (more directly, that the quotient operator $i^\ast$ admits a~right inverse).

Let us also recall the following result which is \cite[Proposition 3.1]{kalton_pelczynski} and, in its `uniform' version, \cite[Proposition 1]{sanchez_castillo (uniform)}.
\begin{theorem}[Kalton \&\ Pe\l czy\'nski \cite{kalton_pelczynski}, Cabello S\'anchez \&\ Castillo \cite{sanchez_castillo (uniform)}]\label{KalPel}
Given Banach spaces $Y$ and $Z$, and a~projective presentation \eqref{qP} of $Y$, the following conditions are equivalent:
\begin{itemize*}
\item[{\rm (i)}] $\Ext(Y,Z)=0$;
\item[{\rm (ii)}] there exists a~constant $c$ such that every operator $t\colon\mathrm{ker}(q)\to Z$ extends to an operator $T\colon P\to Z$ such that $\n{T}\leq c\n{t}$.
\end{itemize*} 
\end{theorem}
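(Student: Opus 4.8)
The plan is to recognise condition (ii) as the statement that the restriction map $j^\ast\colon\mathscr{B}(P,Z)\to\mathscr{B}(\ker(q),Z)$, $T\mapsto T\circ j$, is surjective, and then to extract both the qualitative equivalence with (i) and the uniform constant $c$ from the homology sequence together with the Open Mapping Theorem. Write $K=\ker(q)$ and regard $j\colon K\to P$ as the (isometric) inclusion, so that $j^\ast(T)=T|_K$ and $\n{j^\ast}\leq 1$.

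First I would apply the long homology sequence in the first variable, induced by the projective presentation $\exi{j}{q}{K}{P}{Y}$ and the functor $\mathscr{B}(\cdot,Z)$; following the description in \cite{sanchez_castillo (homology)} one obtains the exact sequence
$$
\mathscr{B}(P,Z)\overset{j^\ast}{\longrightarrow}\mathscr{B}(K,Z)\overset{\theta}{\longrightarrow}\Ext(Y,Z)\longrightarrow\Ext(P,Z),
$$
where $\theta$ sends an operator $t\colon K\to Z$ to the class of the push-out of the presentation along $t$. Since $P\simeq\ell_1(\Gamma)$ is projective, the Lindenstrauss Lifting Principle gives $\Ext(P,Z)=0$, so $\theta$ is onto and hence $\Ext(Y,Z)\cong\mathrm{coker}(j^\ast)$. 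In particular $\Ext(Y,Z)=0$ holds if and only if $j^\ast$ is surjective, that is, if and only if every operator $t\colon K\to Z$ admits an extension $T\colon P\to Z$ with $T\circ j=t$. This already settles (ii)$\Rightarrow$(i): a single extension, regardless of its norm, forces $t$ into the image of $j^\ast$, so surjectivity of $j^\ast$, hence the vanishing of $\Ext(Y,Z)$, follows.

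It remains to produce the uniform constant for (i)$\Rightarrow$(ii). Here I would invoke the Open Mapping Theorem: equipped with the operator norm, $\mathscr{B}(P,Z)$ and $\mathscr{B}(K,Z)$ are Banach spaces and $j^\ast$ is a bounded linear map between them. If (i) holds, then $j^\ast$ is surjective, hence open, so there is a constant $c<\infty$ such that every $t\in\mathscr{B}(K,Z)$ possesses a preimage $T\in\mathscr{B}(P,Z)$ with $\n{T}\leq c\n{t}$; this is exactly assertion (ii). The constant $c$ depends on $Y$, $Z$ and the chosen presentation, as the statement permits.

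The step I expect to require the most care is the passage from the qualitative surjectivity of $j^\ast$ to the uniform estimate, namely checking that the Open Mapping Theorem genuinely applies for the operator norms in play and that the resulting openness constant serves as the extension constant $c$. The homological input (exactness of the first-variable sequence and the vanishing $\Ext(P,Z)=0$) is standard; alternatively one can bypass the long sequence altogether and argue directly with push-outs and pull-backs. For (i)$\Rightarrow$(ii) a given $t$ produces, via the push-out of the presentation, an element of $\Ext(Y,Z)=0$ whose splitting is equivalent to an extension of $t$, after which the Open Mapping Theorem again supplies the uniform bound; for (ii)$\Rightarrow$(i) one lifts $q$ through an arbitrary quotient $\pi\colon W\to Y$ using the projectivity of $P$, corrects the lift by the extension $T$ furnished by (ii) so as to annihilate $j(K)$, and thereby obtains a section of $\pi$, whence the sequence splits by Proposition \ref{prop_21}(iii).
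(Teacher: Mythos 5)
Note first that the paper does not actually prove Theorem \ref{KalPel}: it is quoted from the literature (Proposition 3.1 of \cite{kalton_pelczynski} and Proposition 1 of \cite{sanchez_castillo (uniform)}), so there is no in-paper argument to compare against. Your proof, however, is correct and complete. The identification of condition (ii) with surjectivity-plus-uniform-bound of the restriction operator $j^\ast\colon\mathscr{B}(P,Z)\to\mathscr{B}(\mathrm{ker}(q),Z)$ is exactly right; the contravariant Hom--Ext sequence from \cite{sanchez_castillo (homology)} (the first-variable companion of the sequence the paper itself uses in Section 7) together with $\Ext(P,Z)=0$ gives $\Ext(Y,Z)\cong\mathrm{coker}(j^\ast)$, so the qualitative equivalence holds; and since $\mathscr{B}(P,Z)$ and $\mathscr{B}(\mathrm{ker}(q),Z)$ are Banach spaces and $\n{j^\ast}\leq 1$, the Open Mapping Theorem legitimately upgrades surjectivity of $j^\ast$ to the uniform constant $c$, which is precisely where the strength of (ii) over mere extendability comes from. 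Your fallback arguments are also sound: the push-out of the presentation along $t$ is a genuine Banach space (the graph-type subspace of $Z\oplus P$ is closed because $j$ is an isometric embedding), and its splitting is equivalent to $t$ extending; and in the (ii)$\Rightarrow$(i) direction, correcting a projective lifting $\w{q}$ of $q$ by the extension $T$ of $\w{q}|_{\mathrm{ker}(q)}$ so that it factors through $Y$ gives a section, invoking Proposition \ref{prop_21}(iii). One cosmetic remark: the vanishing $\Ext(P,Z)=0$ for arbitrary $Z$ is the projectivity of $\ell_1(\Gamma)$ (lift the unit vector basis), not the Lindenstrauss Lifting Principle proper, which concerns $\mathscr{L}_1$-spaces and needs the kernel complemented in its bidual; since you also cite projectivity, this is a naming slip rather than a gap.
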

\begin{theorem}\label{proj}
Assume $Y$ is a~Banach space satisfying \eqref{Yl1} and having a~projective presentation \eqref{qP} with $\mathrm{ker}(q)$ being a~$\mathscr{L}_1$-space. Then $Y$ itself is a~$\mathscr{L}_1$-space.
\end{theorem}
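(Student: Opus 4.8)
The plan is to dualize the projective presentation \eqref{qP} and reduce the whole assertion to a splitting statement. Write $K=\mathrm{ker}(q)$, so that we are given the exact sequence $\exi{j}{q}{K}{P}{Y}$ with $P\simeq\ell_1(\Gamma)$ and $K$ a $\mathscr{L}_1$-space. By the Lindenstrauss--Rosenthal duality between $\mathscr{L}_1$- and $\mathscr{L}_\infty$-spaces, it is enough to prove that $Y^\ast$ is a $\mathscr{L}_\infty$-space. Dualizing \eqref{qP} gives $\exi{q^\ast}{j^\ast}{Y^\ast}{P^\ast}{K^\ast}$, where $P^\ast\simeq\ell_\infty(\Gamma)$ is injective, hence a $\mathscr{L}_\infty$-space. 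Thus, \emph{if} this dual sequence splits, then $Y^\ast$ is a complemented subspace of a $\mathscr{L}_\infty$-space and is therefore itself a $\mathscr{L}_\infty$-space, whence $Y$ is a $\mathscr{L}_1$-space. So the entire problem is to show that the dualized presentation splits. Note that the hypothesis that $K$ is $\mathscr{L}_1$ has so far been used only to guarantee that $K^\ast$ is $\mathscr{L}_\infty$ (equivalently, that $K^{\ast\ast}$ is again a $\mathscr{L}_1$-space).

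To control the splitting I would pass to the functor $\Ext$. The dual sequence represents a class in $\Ext(K^\ast,Y^\ast)$, and by the duality isomorphism of Theorem \ref{JJY_theorem} (applied with $Z=K^\ast$) we have $\Ext(K^\ast,Y^\ast)\cong\Ext(Y,K^{\ast\ast})$. Consequently it suffices to prove that $\Ext(Y,K^{\ast\ast})=0$: the whole group vanishing forces, in particular, the class of our dual sequence to be trivial, i.e. the sequence to split. This is where the target space $K^{\ast\ast}$ enters as a $\mathscr{L}_1$-space which, being a bidual, is moreover $1$-complemented in its own bidual.

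The technical heart is thus to upgrade the hypothesis $\Ext(Y,\ell_1)=0$ to $\Ext(Y,K^{\ast\ast})=0$. By the Kalton--Pe\l czy\'nski criterion (Theorem \ref{KalPel}) applied to the presentation \eqref{qP} with target $\ell_1$, there is a constant $c$ such that every operator $t\colon K\to\ell_1$ extends to $T\colon P\to\ell_1$ with $\n{T}\leq c\n{t}$; by the same criterion it is enough to produce an analogous uniform extension constant for operators into $K^{\ast\ast}$. Given $s\colon K\to K^{\ast\ast}$, I would exploit that $K^{\ast\ast}$ is $\mathscr{L}_1$: locally its finite-dimensional subspaces are $\lambda$-isomorphic to $\ell_1^n\subset\ell_1$, so the $\ell_1^n$-valued truncations of $s$ extend to $P$ with the single constant $c$, and one then assembles a global extension $S\colon P\to K^{\ast\ast}$ by a weak$^\ast$-compactness limit, legitimate precisely because $K^{\ast\ast}$ is a dual space (this is the same device as in Proposition \ref{bidual}). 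The main obstacle I anticipate is exactly this assembly step: propagating a single extension constant across \emph{all} operators $s\colon K\to K^{\ast\ast}$ and making the local $\ell_1^n$-approximations cohere in the absence of any approximation property on $Y$ or $K$; the dual (weak$^\ast$-compact) structure of $K^{\ast\ast}$, together with careful bookkeeping of the Banach--Mazur constants coming from the $\mathscr{L}_1$-ness of $K$, is what must be leveraged to close this gap.
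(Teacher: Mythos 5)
Your plan is correct and takes a genuinely different route from the paper's, and the ``main obstacle'' you anticipate at the end is not actually an obstacle. Both arguments run on the same two ingredients: the uniform extension constant $c$ for operators from $K=\mathrm{ker}(q)$ into $\ell_1$ (hence into its $1$-complemented subspaces $\ell_1^m$), supplied by \eqref{Yl1} and Theorem \ref{KalPel}, and the fact that in a $\mathscr{L}_1$-space every finite-dimensional subspace sits inside a finite-dimensional subspace that is simultaneously $\rho$-isomorphic to some $\ell_1^m$ and $\rho$-complemented (\cite[Proposition II.5.9]{lindenstrauss_tzafriri}). The difference is where these are applied. The paper applies them to $K$ itself: given a finite-dimensional $F\subset P$, it puts $G=F\cap K$, encloses $G$ in $H\subset K$ with a projection $\pi\colon K\to H$, $\n{\pi}\leq\rho$, and an isomorphism $S\colon H\to\ell_1^m$, extends $S\pi$ to $T\colon P\to\ell_1^m$, and checks that $T_F=S^{-1}T$ fixes $G$ with $\n{T_F}\leq c\rho^2$. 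This says precisely that $j(K)$ is locally complemented in $P$, i.e.\ that \eqref{qP} locally splits; the standard equivalence recalled in Section~8 (local splitting of a sequence is equivalent to splitting of its dual sequence --- this is where the weak$^\ast$-compactness is hidden) then gives that $Y^\ast$ is complemented in the injective space $P^\ast\simeq\ell_\infty(\Gamma)$, hence injective, hence $Y$ is $\mathscr{L}_1$. You reach the same dual splitting through $\Ext(Y,K^{\ast\ast})=0$ and Theorem \ref{JJY_theorem}; this works, but it costs two extra reductions (passing to $K^{\ast\ast}$, which one must first note is again $\mathscr{L}_1$, and invoking the duality isomorphism) that the paper's direct local-splitting argument avoids.

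As for the assembly step you flag: it closes exactly with the tools you name, and no approximation property enters, because the ``truncations'' exist by uniform complementation, not by any approximation. Since $K^{\ast\ast}$ is $\mathscr{L}_1$, for every finite-dimensional $G\subset K^{\ast\ast}$ there are a finite-dimensional $E_G\supset G$, an isomorphism $u_G\colon E_G\to\ell_1^{n_G}$ with $\n{u_G}\cdot\n{u_G^{-1}}\leq\rho$, and a projection $\pi_G\colon K^{\ast\ast}\to E_G$ with $\n{\pi_G}\leq\rho$. Given $s\colon K\to K^{\ast\ast}$, the operator $u_G\pi_G s$, viewed as taking values in $\ell_1$, extends to $P$ with the constant $c$; composing back with the norm-one projection of $\ell_1$ onto $\ell_1^{n_G}$ and with $u_G^{-1}$ yields $S_G\colon P\to K^{\ast\ast}$ satisfying $\n{S_G}\leq c\rho^2\n{s}$ and $S_G j=\pi_G s$. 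The net $(S_G)$, indexed by the finite-dimensional subspaces $G$ of $K^{\ast\ast}$ ordered by inclusion, is uniformly bounded, so by Banach--Alaoglu and Tychonoff (the device of Proposition \ref{bidual}) it admits a subnet converging pointwise in the weak$^\ast$ topology to a linear map $S\colon P\to K^{\ast\ast}$ with $\n{S}\leq c\rho^2\n{s}$; moreover $Sj=s$, because for each fixed $x\in K$ one eventually has $s(x)\in G\subset E_G$, whence $\pi_G s(x)=s(x)$. Since the constant $c\rho^2$ is independent of $s$, Theorem \ref{KalPel} gives $\Ext(Y,K^{\ast\ast})=0$, and the rest of your argument goes through as written.
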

\begin{proof}
First, observe that by condition \eqref{Yl1}, Theorem \ref{KalPel}, and the fact that for each $m\in\N$ the space $\ell_1^m$ is $1$-complemented in $\ell_1$,
\begin{equation}\label{l1m}
\begin{array}{l}
\mbox{there exists a constant }c\geq 1\mbox{ such that for each }m\in\N\mbox{ every operator}\\
t\colon\mathrm{ker}(q)\to\ell_1^m\mbox{ extends to an operator }T\colon P\to\ell_1^m\mbox{ with }\n{T}\leq c\n{t}.
\end{array}
\end{equation}
Since $\mathrm{ker}(q)$ is a $\mathscr{L}_1$-space, 
\begin{equation}\label{kerqL}
\begin{array}{l}
\mbox{there exists a constant }\rho>1\mbox{ such that for every finite-dimensional subspace }G\\
\mbox{of }\mathrm{ker}(q)\mbox{ there is a finite-dimensional subspace }H\mbox{ of }\mathrm{ker}(q)\mbox{ such that }G\subset H,\\
d_\mathsf{BM}(H,\ell_1^m)<\rho\mbox{ (where }m=\dim H\mbox{), and there is a projection }\pi\colon\mathrm{ker}(q)\to H\\
\mbox{with }\n{\pi}\leq\rho
\end{array}
\end{equation}
(see \cite[Proposition II.5.9]{lindenstrauss_tzafriri}). 

In order to show that \eqref{qP} locally splits, fix any finite-dimensional subspace $F\subset P$ and put $G=F\cap\mathrm{ker}(q)$. Let $H$ be as in \eqref{kerqL} and let $S\colon H\to\ell_1^m$ be an isomorphism satisfying $\n{S}\cdot\n{S^{-1}}\leq\rho$. Now, apply \eqref{l1m} to the operator $t=S\pi\colon\mathrm{ker}(q)\to\ell_1^m$, $\n{t}\leq\n{S}\rho$. We get an extension $T\colon P\to\ell_1^m$ of $t$ with $\n{T}\leq\n{S}c\rho$. Define $T_F=S^{-1}T\colon P\to H$. Then, $\n{T_F}\leq c\rho^2$ and for each $x\in G$ we have $T_Fx=S^{-1}Tx=S^{-1}S\pi x=\pi x=x$, which shows that \eqref{qP} locally splits. Consequently, the dual sequence $$\exi{q^\ast}{j^\ast}{Y^\ast}{P^\ast}{\mathrm{ker}(q)^\ast}$$splits, which means that $Y^\ast$ is complemented in $P^\ast\simeq\ell_\infty(\Gamma)$, so it is injective, hence $Y$ itself is a~$\mathscr{L}_1$-space.
\end{proof}
\begin{remark}\label{proj2}
The assumption about $\mathrm{ker}(q)$ is fully legitimate, because if $Y$ actually is a~$\mathscr{L}_1$-space, then the kernel of every projective presentation of $Y$ must be a~$\mathscr{L}_1$-space as well (consult \cite[Proposition II.5.13]{lindenstrauss_tzafriri} for the separable case; the non-separable case is basically the same).
\end{remark}
\section{Final remarks}
\noindent
Let us list some problems which arise from this paper.
\setenumerate{itemsep=1em,leftmargin=2em}
\begin{enumerate}
\item Let $\kappa\geq\omega_1$ be a~cardinal number and let $\lambda\geq 1$. In view of Theorem \ref{k_injective}, one may ask whether the class of all Banach spaces $X$ having the $\kappa$-$\SVM$ property is richer than the class of $(\lambda,\kappa)$-injective spaces. A~glance at Proposition \ref{dens}(e) and Corollary \ref{cor_iff} shows that these two classes are equal if and only if the following two assertions are equivalent:
\begin{itemize*}
\item $Z(Y,X)\leq B$ for some $B<\infty$ and every Banach space $Y$ with density character less than $\kappa$;
\item$Z(X_\F,X)\leq C$ for some $C<\infty$ and every set algebra $\F$ with $\abs{\F}<\kappa$.
\end{itemize*}
If $\cf(\kappa)>\omega$, these two assertions are equivalent to $\Ext(Y,X)=0$ and $\Ext(X_\F,X)=0$, respectively, with the same restrictions upon $Y$ and $\F$.

\item Does the $\SVM$ property implies injectivity? For a~possible counterexample one may ask whether the spaces $\ell_\infty/c_0$ and/or $\ell_\infty^c(\Gamma)$ (the space of all countably supported sequences from $\ell_\infty(\Gamma)$, with $\Gamma$ being uncountable) have the $\SVM$ property. These two are both known to be universally separably injective, yet not injective (see \cite{aviles}).

\item Of particular interest is the case $\kappa=\omega_1$ in (1). If it was true that every Banach space $X$ having the $\omega_1$-$\SVM$ property is $\omega_1$-injective (that is, separably injective), then every such space would be automatically a~$\mathscr{L}_\infty$-space and, if only $X$ is infinite-dimensional, it would contain a~copy of $c_0$ (see \cite[Proposition 1.8]{aviles}). We may therefore ask: Must every Banach space having the $\omega_1$-$\SVM$ property be a~$\mathscr{L}_\infty$-space? By our remarks concerning condition \eqref{Yl1} and Theorem \ref{proj}, we know that the answer is positive for Banach spaces $X$ such that the kernel of a~projective presentation of $X^\ast$ is a~$\mathscr{L}_1$-space. Similarly, we may ask the next question:

\item Assume $X$ is an infinite-dimensional Banach space having the $\omega_1$-$\SVM$ property (or even the $\SVM$ property). Does it imply that $X$ contains a~copy of $c_0$? Notice that, in view of Theorem \ref{T4}, a~positive answer (even under the stronger assumption) would imply that $\Ext(c_0,Y)\not=0$ for every infinite-dimensional, reflexive Banach space $Y$, {\it i.e.} every such space would produce a~non-trivial locally convex twisted sum with $c_0$. Cabello S\'anchez and Castillo showed that $\Ext(c_0,Y)\not=0$ whenever $Y$ is a~cotype $2$ space, complemented in its bidual (\cite[Corollary 1]{sanchez_castillo (uniform)}).

\item A weaker variant of (2) and (3): Does the $\SVM$ property of $X$ imply that $X^{\ast\ast}$ is injective?

\item The assumption $\cf(\kappa)>\omega$ was made in Theorem \ref{3sp}, because although we know that the property $\Ext(\mathfrak{X},\cdot)=0$ is a~$\mathsf{3SP}$ property, we do not know if it is a~$\mathsf{3SP}$ property `uniformly'. More precisely, the question reads as follows: Is there a~function $\varphi\colon (0,\infty)^2\to (0,\infty)$ such that whenever $\mathfrak{X}$, $X$, $Y$, $Z$ are Banach spaces, $\ex{Y}{Z}{X}$ is an exact sequence, and $\Ext(\mathfrak{X},X)=0=\Ext(\mathfrak{X},Y)$,
then $$\,\,\,\Ext(\mathfrak{X},Z)=0\quad\mbox{and}\quad Z(\mathfrak{X},Z)\leq\varphi\bigl(Z(\mathfrak{X},X),Z(\mathfrak{X},Y)\bigr)\mbox{?}$$
If the answer is `yes', then we may drop the assumption $\cf(\kappa)>\omega$ in Theorem \ref{3sp}, by using Corollary \ref{cor_iff}. Note that the homological proof of the fact that $\Ext(\mathfrak{X},\cdot)=0$ is a~$\mathsf{3SP}$ property does not touch this issue as it identifies all zero-linear maps which are approximable by linear ones, no matter what their internal structure is.
\end{enumerate}
\section*{Acknowledgements}{I am grateful to the anonymous Referee for several valuable remarks.}

\end{document}